\DeclareMathOperator{\gr}{Gr}
\DeclareMathOperator{\g}{Gal}
\DeclareMathOperator{\fl}{\mathbb{F}\ell}
\DeclareMathOperator{\codim}{codim}
\DeclareMathOperator{\rk}{rank}
\DeclareMathOperator{\init}{in}
\DeclareMathOperator{\gl}{GL}
\theoremstyle{remark}
\numberwithin{equation}{chapter}
\newtheorem{theorem}{Theorem}[chapter]
\newtheorem{lemma}[theorem]{Lemma}
\newtheorem{cor}[theorem]{Corollary}
\newtheorem{alg}[theorem]{Algorithm}
\newtheorem{definition}[theorem]{Definition}
\newtheorem{example}[theorem]{Example}
\newcommand{\I}{\includegraphics{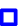}}
\newcommand{\II}{\includegraphics{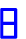}}
\newcommand{\III}{\includegraphics{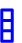}}
\newcommand{\T}{\includegraphics{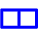}}
\newcommand{\TI}{\includegraphics{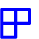}}
\newcommand{\TII}{\includegraphics{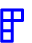}}
\newcommand{\TT}{\includegraphics{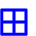}}
\newcommand{\TTI}{\includegraphics{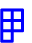}}
\newcommand{\Th}{\includegraphics{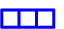}}
\newcommand{\ThI}{\includegraphics{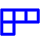}}
\newcommand{\ThII}{\includegraphics{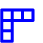}}
\newcommand{\ThT}{\includegraphics{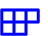}}
\newcommand{\ThTI}{\includegraphics{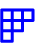}}
\newcommand{\ThTh}{\includegraphics{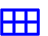}}
\newcommand{\ThThI}{\includegraphics{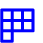}}
\newcommand{\ThThT}{\includegraphics{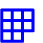}}
\newcommand{\F}{\includegraphics{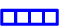}}
\newcommand{\FI}{\includegraphics{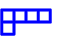}}
\newcommand{\FII}{\includegraphics{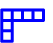}}
\newcommand{\FT}{\includegraphics{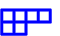}}
\newcommand{\FTI}{\includegraphics{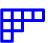}}
\newcommand{\FTT}{\includegraphics{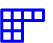}}
\newcommand{\FTh}{\includegraphics{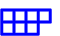}}
\newcommand{\FThI}{\includegraphics{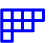}}
\begin{document}

\renewcommand{\tamumanuscripttitle}{Restrictions on Galois groups of Schubert problems}

\renewcommand{\tamupapertype}{Dissertation}

\renewcommand{\tamufullname}{Robert Lee Williams}

\renewcommand{\tamudegree}{Doctor of Philosophy}
\renewcommand{\tamuchairone}{Frank Sottile}

\renewcommand{\tamumemberone}{Laura Matusevich}
\newcommand{\tamumembertwo}{J. Maurice Rojas}
\newcommand{\tamumemberthree}{John Keyser}
\renewcommand{\tamudepthead}{Emil Straube}

\renewcommand{\tamugradmonth}{August}
\renewcommand{\tamugradyear}{2017}
\renewcommand{\tamudepartment}{Mathematics}

\providecommand{\tabularnewline}{\\}

\begin{titlepage}
\begin{center}
\MakeUppercase{\tamumanuscripttitle}
\vspace{4em}

A \tamupapertype

by

\MakeUppercase{\tamufullname}

\vspace{4em}

\begin{singlespace}

Submitted to the Office of Graduate and Professional Studies of \\
Texas A\&M University \\

in partial fulfillment of the requirements for the degree of \\
\end{singlespace}

\MakeUppercase{\tamudegree}
\par\end{center}
\vspace{2em}
\begin{singlespace}
\begin{tabular}{ll}
 & \tabularnewline
& \cr
Chair of Committee, & \tamuchairone\tabularnewline
Committee Members, & \tamumemberone\tabularnewline
 & \tamumembertwo\tabularnewline
 & \tamumemberthree\tabularnewline
Head of Department, & \tamudepthead\tabularnewline

\end{tabular}
\end{singlespace}
\vspace{3em}

\begin{center}
\tamugradmonth \hspace{2pt} \tamugradyear

\vspace{3em}

Major Subject: \tamudepartment \par
\vspace{3em}
Copyright \tamugradyear \hspace{.5em}\tamufullname 
\par\end{center}
\end{titlepage}
\pagebreak{}

\chapter*{ABSTRACT}
\addcontentsline{toc}{chapter}{ABSTRACT} %

\pagestyle{plain} %
\pagenumbering{roman} %
\setcounter{page}{2}

\indent The Galois group of a Schubert problem encodes some structure of its set of solutions. Galois groups are known for a few infinite families and some special problems, but what permutation groups may appear as a Galois group of a Schubert problem is still unknown. We expand the list of Schubert problems with known Galois groups by fully exploring the Schubert problems on $\gr(4,9)$, the smallest Grassmannian for which they are not currently known. We also discover sets of Schubert conditions for any sufficiently large Grassmannian that imply the Galois group of a Schubert problem is much smaller than the full symmetric group.

These results are attained by combining computational exploration with geometric arguments. We use a technique initially described by Vakil to filter out many problems whose Galois group contains the alternating group. We then implement a more computationally intensive algorithm that collects data about the Galois groups of the remaining problems. For each of these, we either gather enough data about elements in the Galois group to determine that it must be the full symmetric group, or we find structure in the set of solutions that restricts the Galois group. Combining the restrictions imposed by the structure of the solutions with the data gathered about the group through the algorithm, we are able to determine the Galois group of these problems as well.

\pagebreak{}

\chapter*{DEDICATION}
\addcontentsline{toc}{chapter}{DEDICATION}  %

\begin{center}
\vspace*{\fill}
Mom, thank you for always believing in me and all that you have sacrificed to give me the opportunity to succeed.

Hien, thank you for always being at my side and supporting me through all of life's trials.
\vspace*{\fill}
\end{center}

\pagebreak{}

\chapter*{ACKNOWLEDGMENTS}
\addcontentsline{toc}{chapter}{ACKNOWLEDGMENTS}  %

\indent I would like to thank Frank Sottile for all of his patience and guidance.

\pagebreak{}
\chapter*{CONTRIBUTORS AND FUNDING SOURCES}
\addcontentsline{toc}{chapter}{CONTRIBUTORS AND FUNDING SOURCES}  %

\subsection*{Contributors}
This work was supported by a dissertation committee consisting of Professors Frank Sottile, advisor, Laura Matusevich and Maurice Rojas of the Department of Mathematics and Professor John Keyser of the Department of Computer Science and Engineering.

The list of all problems to be analyzed was compiled by Professor Frank Sottile. The algorithm described at the end of Chapter 2 was implemented by Christopher Brooks and Professor Frank Sottile. The algorithm described in Chapter 3  was implemented with the assistance of a software library developed by the student and Professors Luis Garcia-Puente, James Ruffo, and Frank Sottile.

All other work conducted for the dissertation was completed by the student independently.

\subsection*{Funding Sources}
Graduate study was supported by a fellowship from Texas A\&M University and grant DMS-1501370 from the National Science Foundation. 
\pagebreak{} %

\phantomsection
\addcontentsline{toc}{chapter}{TABLE OF CONTENTS}  

\begin{singlespace}
\renewcommand\contentsname{\normalfont} {\centerline{TABLE OF CONTENTS}}

\setcounter{tocdepth}{4} %

\setlength{\cftaftertoctitleskip}{1em}
\renewcommand{\cftaftertoctitle}{%
\hfill{\normalfont {Page}\par}}

\tableofcontents

\end{singlespace}

\pagebreak{}

\phantomsection
\addcontentsline{toc}{chapter}{LIST OF FIGURES}  

\renewcommand{\cftloftitlefont}{\center\normalfont\MakeUppercase}

\setlength{\cftbeforeloftitleskip}{-12pt} %
\renewcommand{\cftafterloftitleskip}{12pt}

\renewcommand{\cftafterloftitle}{%
\\[4em]\mbox{}\hspace{2pt}FIGURE\hfill{\normalfont Page}\vskip\baselineskip}

\begingroup

\begin{center}
\begin{singlespace}
\setlength{\cftbeforechapskip}{0.4cm}
\setlength{\cftbeforesecskip}{0.30cm}
\setlength{\cftbeforesubsecskip}{0.30cm}
\setlength{\cftbeforefigskip}{0.4cm}
\setlength{\cftbeforetabskip}{0.4cm} 

\listoffigures

\end{singlespace}
\end{center}

\pagebreak{}

\phantomsection
\addcontentsline{toc}{chapter}{LIST OF TABLES}  

\renewcommand{\cftlottitlefont}{\center\normalfont\MakeUppercase}

\setlength{\cftbeforelottitleskip}{-12pt} %

\renewcommand{\cftafterlottitleskip}{1pt}

\renewcommand{\cftafterlottitle}{%
\\[4em]\mbox{}\hspace{2pt}TABLE\hfill{\normalfont Page}\vskip\baselineskip}

\begin{center}
\begin{singlespace}

\setlength{\cftbeforechapskip}{0.4cm}
\setlength{\cftbeforesecskip}{0.30cm}
\setlength{\cftbeforesubsecskip}{0.30cm}
\setlength{\cftbeforefigskip}{0.4cm}
\setlength{\cftbeforetabskip}{0.4cm}

\listoftables 

\end{singlespace}
\end{center}
\endgroup
\pagebreak{}  

\pagestyle{plain} %
\pagenumbering{arabic} %
\setcounter{page}{1}

\chapter{\uppercase {Introduction}}

Galois groups were first considered in enumerative geometry by Jordan in 1870 \cite{Jordan}. He studied several classical problems and identified structure that prevented their Galois groups from being the full symmetric group. One structure that Jordan studied comes from the Cayley-Salmon Theorem \cite{Cayley, Salmon} which states that a smooth cubic surface in $\mathbb{P}^3$ contains $27$ lines. In \cite{Jordan}, Jordan showed that the incidence structure of the $27$ lines forced the Galois group of this problem to be a subgroup of $E_6$.

This area was revived when Harris studied algebraic Galois groups as geometric monodromy groups \cite{Harris_Gal}, an equivalence that was first discovered by Hermite in 1851 \cite{Hermite}. Harris showed that many problems have the full symmetric group as their Galois group, such as the set of lines that lie on a general hypersurface in $\mathbb{P}^n$ of degree $2n-3$ for $n \geq 4$. In the case of the lines of a cubic surface in $\mathbb{P}^3$, Harris showed that the monodromy group is in fact $E_6$ \cite{Harris_Gal}.

We further explore Galois groups in enumerative geometry by focusing on the Schubert calculus, which is the study of linear subspaces that satisfy prescribed incidence conditions with respect to other general linear subspaces \cite{SchubertCalculus}. The structure of these problems makes them ideal for exploring Galois groups as they are readily modeled on a computer. Typically, the first problem one sees in the Schubert calculus is the problem of four lines:

\begin{example}\label{ex: 4 lines}
	How many lines in $\mathbb{P}^3$ meet four lines in general position? 
	
	We will reference Figure \ref{pic: 4 lines} in solving this problem. A hyperboloid in $\mathbb{P}^3$ is uniquely determined by its ten coefficients up to scale for a total of nine degrees of freedom. Restricting a quadratic to a line gives three linear conditions. Thus, given three lines in general position, such as the green, blue, and red lines in Figure \ref{pic: 4 lines}, there exists a unique hyperboloid such that each line lies on its surface. In fact, the hyperboloid is a doubly-ruled surface, and these three lines lie in one ruling. Therefore, the lines that meet these three lines lie in the other ruling of the hyperboloid. The fourth line will intersect the hyperboloid at two points. The black line in Figure \ref{pic: 4 lines} is an instance of such a line. Thus, the lines that meet all four are the lines in the other ruling of the hyperboloid that meet the fourth line at one of these two intersection points. These are the magenta lines in the Figure \ref{pic: 4 lines}.
\end{example}

\begin{figure}
		\begin{center}
			\includegraphics[scale=0.3]{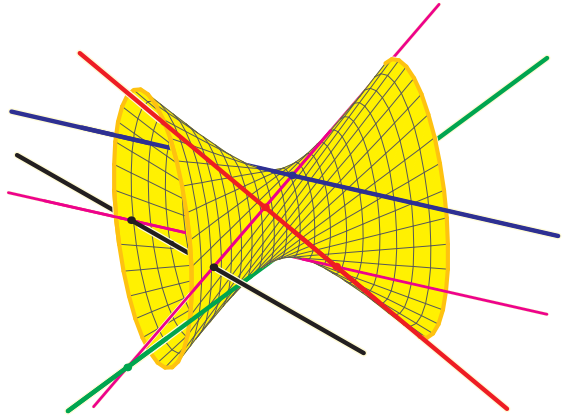}
		\end{center}
		\caption{The problem of four lines.}\label{pic: 4 lines}
\end{figure}

Techniques to study these problems come from many branches of mathematics. On Grassmannians, some structure of the Schubert calculus is reflected in the product structure of the cohomology ring \cite{Fulton_Intersection}. This observation leads to some of the standard notation for the cohomology ring to be adopted for our study. Additionally, combinatorial techniques are useful in both enumerating the number of solutions to a problem and gaining some information about the Galois group \cite{Vakil_checkerboard}. These techniques have been implemented using symbolic computational methods. Computational approaches to these problems have also expanded to using numerical methods to study their Galois groups \cite{Leykin}.

Several advances have been made in classifying the Galois groups that arise in the Schubert calculus of Grassmannians. In \cite{Leykin}, Leykin and Sottile use numerical methods to determine that the Galois group of several simple Schubert problems is the full symmetric group. A simple Schubert problem is one in which all except possibly two conditions impose only a one dimensional restriction. Vakil gave combinatorial criteria for determining when the Galois group contains the alternating group \cite{Vakil_checkerboard}, in which case we say the group is \emph{at least alternating}. He also showed that Schubert problems in the Grassmannian of $2$-planes in $m$-space, $\gr(2,m)$, for $m \leq 16$ and Schubert problems in $\gr(3,m)$ for $m \leq 9$ have an at least alternating Galois group \cite{Vakil_Induction}. Brooks, Mart\'in del Campo, and Sottile expanded this work by showing that any Schubert problem in $\gr(2,m)$ has an at least alternating Galois group \cite{g2n}. Additionally, Sottile and White showed that any Schubert problem in $\gr(3,m)$ and any Schubert problem in $\gr(k,m)$ only involving conditions of the form ``the $k$-plane meets an $\ell$-plane nontrivially with $k+\ell \leq m$'' is doubly transitive \cite{g3n}.

However, it is important to note that Galois groups of Schubert problems need not be at least alternating. The first example of such a problem is given in \cite{Vakil_Induction} and credited to Derksen. In \cite{g3n}, it is found that exactly fourteen Schubert problems in $\gr(4,8)$ have a Galois group that does not contain the alternating group. We will continue this exploration in Chapter 3 by showing that exactly 148 Schubert problems in $\gr(4,9)$ have a Galois group that is not at least alternating. Moreover, we will give some combinatorial conditions which imply that Schubert problems have similar geometric structure restricting their Galois groups and use this structure to organize the 148 problems into eleven different types.

\chapter{BACKGROUND}

We give an introduction to Schubert calculus focusing on Schubert problems on Grassmannians. We also introduce the Galois group of a Schubert problem and explain both Vakil's criterion, which implies a Galois group contains the alternating group \cite{Vakil_Induction}, and the method of sampling from the Galois group via Frobenius elements as outlined in \cite{Campo_Experimentation}. We assume the reader is familiar with the standard ideal-variety correspondence \cite{IVA}.

In Section \ref{alg}, we will cover the algebra that will be used to compute solutions to instances of Schubert problems. In Section \ref{numbtheory}, we will cover the essential number theory and group theory we will use in our sampling algorithm. Section \ref{sp} will introduce the Schubert calculus on the Grassmannian.

\section{Algebra}\label{alg}

We begin by introducing the algebraic ideas behind the algorithm we used following \cite{UAG, IVA}. Choosing local coordinates, we can model instances of Schubert problems with polynomials and calculate their solution sets. Our first step is to develop a method of finding generators for the ideal defined by our Schubert problem that will be useful for large-scale computations.

By taking the indeterminate vector $x:=(x_1,x_2,\dots,x_m)$ and exponent vector $\alpha:=(\alpha_1,\alpha_2,\dots,\alpha_m)$, we may write monomials as $x^\alpha=x_1^{\alpha_1}x_2^{\alpha_2}\cdots x_s^{\alpha_s}$. A \emph{monomial order} on $\mathbb{C}[x]$ is a well-ordering of monomials such that $1$ is minimal and if $x^\alpha \prec x^\beta$, then $x^\alpha x^\gamma \prec x^\beta x^\gamma$. An example of a monomial ordering is the \emph{lexicographic order} in which $x^\alpha \prec x^\beta$ if the last nonzero entry of $\beta-\alpha$ is positive. For instance, 
\[
x_1 \prec x_1^3 \prec x_2 \prec x_1x_2 \prec x_1^{12}x_2 \prec x_2^2.
\]
 For $f \in \mathbb{C}[x]$, we define the \emph{initial term} of $f$, $\init_\prec f$, to be the maximal term of $f$ with respect to $\prec$. For example, if $f=2x_1x_2^4+4x_1^2x_3-3x_2$, then under the lexicographic term order $\init_\prec f = 4x_1^2x_3$. Extending this notion, we define for any ideal $I \subset \mathbb{C}[x]$, the \emph{initial ideal of $I$}, $\init_\prec I := \langle \init_\prec f \mid f \in I \rangle$. 
 
 \begin{definition}
  We call $B=\{b_1, b_2, \dots, b_t\} \subset I$ a \emph{Gr\"obner basis} of $I$ with respect to the monomial ordering $\prec$ if $\langle \init_\prec b_1, \init_\prec b_2, \dots, \init_\prec b_t \rangle = \init_\prec I$. A Gr\"{o}bner basis $B$ is \emph{reduced} if given any $b_i,b_j \in B$ with $i \neq j$, $\init_\prec b_i$ does not divide any term of $b_j$.  
\end{definition}

\begin{lemma}
	If $B$ is a Gr\"obner basis of $I$, then $B$ is a generating set of $I$.
\end{lemma}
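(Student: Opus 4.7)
The plan is to use the multivariate division algorithm to reduce an arbitrary element of $I$ against $B$ and show the remainder must vanish. Let $f \in I$ be arbitrary; our goal is to express $f$ as a $\mathbb{C}[x]$-linear combination of elements of $B$.

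First I would invoke the multivariate division algorithm: dividing $f$ by the ordered tuple $(b_1,b_2,\dots,b_t)$ produces quotients $q_1,\dots,q_t \in \mathbb{C}[x]$ and a remainder $r \in \mathbb{C}[x]$ with
\[
f \;=\; q_1 b_1 + q_2 b_2 + \cdots + q_t b_t + r,
\]
where $r$ has the property that no monomial appearing in $r$ is divisible by any of $\init_\prec b_1, \dots, \init_\prec b_t$. This is the standard termination property of the division algorithm, which relies only on the well-ordering provided by $\prec$ and the fact that each reduction step strictly decreases the leading term being cancelled.

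Next I would observe that $r = f - \sum_i q_i b_i$ is a difference of elements of $I$, hence $r \in I$. If $r \neq 0$, then $\init_\prec r$ is a nonzero element of $\init_\prec I$. By the defining property of a Gröbner basis, $\init_\prec I = \langle \init_\prec b_1, \dots, \init_\prec b_t \rangle$, so $\init_\prec r$ lies in the monomial ideal generated by the $\init_\prec b_i$. A standard fact about monomial ideals then forces $\init_\prec r$ to be divisible by some $\init_\prec b_i$. This contradicts the defining property of the remainder $r$. Therefore $r = 0$, and $f = \sum_i q_i b_i \in \langle B \rangle$. Since $B \subset I$ already gives $\langle B \rangle \subset I$, we conclude $I = \langle B \rangle$.

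No step here is a serious obstacle; the only point that requires care is the observation that membership in a monomial ideal implies divisibility by one of the generating monomials, which is where the assumption that $\init_\prec b_1, \dots, \init_\prec b_t$ generate the initial ideal (rather than merely lying in it) is used in an essential way. This is the hinge of the argument and is what distinguishes a Gröbner basis from an arbitrary subset of $I$ whose initial terms happen to belong to $\init_\prec I$.
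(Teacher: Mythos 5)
Your proof is correct, but it takes a genuinely different route from the one in the paper. You run the full multivariate division algorithm on an arbitrary $f\in I$, observe that the remainder $r$ lies in $I$, and then use the Gr\"obner hypothesis (via the fact that a monomial lies in a monomial ideal if and only if it is divisible by one of the generators) to force $r=0$. The paper instead argues by minimal counterexample: it picks $f\in I\setminus\langle B\rangle$ with $\init_\prec f$ minimal under the well-ordering, produces a single $g\in\langle B\rangle$ with $\init_\prec g=\init_\prec f$, and notes that $f-g$ would be a smaller counterexample. The two arguments share the same hinge --- you correctly identify it as the step where $\init_\prec r$ (resp.\ $\init_\prec f$) being in $\langle \init_\prec b_1,\dots,\init_\prec b_t\rangle$ yields divisibility by some $\init_\prec b_i$ --- and both ultimately rest on the well-ordering of monomials, which in your version is buried inside the termination proof of the division algorithm. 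What your approach buys is an explicit representation $f=\sum_i q_i b_i$ and, as a byproduct, the standard remainder characterization of ideal membership; what the paper's approach buys is self-containment, since it never needs to state or justify the division algorithm, performing only one cancellation step and letting minimality do the rest. One small caveat: if you present your version, you should either prove or explicitly cite the division algorithm and the monomial-ideal divisibility fact, since the paper's development up to that point has introduced neither.
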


\begin{proof}
	Let $I$ be an ideal and $B=\{b_1, \dots, b_t\}$ be a Gr\"obner basis of $I$ with respect to the monomial order $\prec$. By construction, $b_i \in I$ for all $i$, thus we have $\langle b_1, \dots, b_t \rangle \subseteq I$.
	
	To obtain the other inclusion, we assume $I \nsubseteq \langle b_1, \dots, b_t \rangle$ and derive a contradiction. Let $f \in I \setminus \langle b_1, \dots, b_t \rangle$ be such that $\init_\prec f$ is minimal among all $f \in I \setminus \langle b_1, \dots, b_t \rangle$. Since $f \in I$ and $B$ is a Gr\"obner basis of $I$, we must have $\init_\prec f \in \init_\prec \langle b_1, \dots, b_t \rangle$. Therefore, there exists some $g \in \langle b_1, \dots, b_t \rangle$ such that $\init_\prec f = \init_\prec g$. Since $g \in \langle b_1, \dots, b_t \rangle \subset I$, $f \in I$, and $f \notin \langle b_1, \dots, b_t \rangle$, we have $f-g \in I$ and $f-g \notin \langle b_1, \dots, b_t \rangle$. Moreover, by construction $\init_\prec (f-g) \prec \init_\prec f$. However, this contradicts the minimality of $\init_\prec f$. Thus, $B$ must be a generating set of $I$.
\end{proof}

 Gr\"obner bases are very powerful computational tools. In particular, if one has a zero-dimensional variety, then one may obtain a system of polynomials that are useful in determining the points of the variety. When $V(I)$ is zero-dimensional, we say that $I$ is a \emph{zero-dimensional} ideal. 
 Then the following result shows us how we can use Gr\"obner bases to determine the points of $V(I)$.

\begin{theorem}[The Shape Lemma \cite{Shape}]\label{Shape Lemma}
	Let $I \subset \mathbb{C}[x]$ be a zero-dimensional ideal such that $\lvert V(I) \rvert =d$. Suppose there exists a square-free $g_1 \in \mathbb{C}[x_1]$ such that $\deg(g_1)=d$ and $g_1 \in I$. Then the $x_1$-coordinates of the points of $V(I)$ are distinct, there exists  $g_2,\dots,g_m \in \mathbb{C}[x_1]$ with $\deg(g_2),\dots,\deg(g_m) < d$ such that  $\{g_1,x_2-g_2, x_3-g_3,\dots,x_m-g_m \}$ is a Gr\"obner basis for $I$ with respect to a lexicographic order, and $\mathbb{C}[x]/I \cong \mathbb{C}[x_1]/\langle g_1 \rangle$.

\end{theorem}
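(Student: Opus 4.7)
The plan is to prove all three conclusions at once by establishing the $\mathbb{C}$-algebra isomorphism $\bar\phi \colon \mathbb{C}[x_1]/\langle g_1\rangle \to \mathbb{C}[x]/I$ induced by the inclusion $\mathbb{C}[x_1] \hookrightarrow \mathbb{C}[x]$. Since $g_1 \in I$, this inclusion descends to a map $\bar\phi$ whose source has $\mathbb{C}$-dimension $d$. Once $\bar\phi$ is shown to be an isomorphism, the ring-isomorphism conclusion is immediate; for each $j \geq 2$, the class of $x_j$ pulls back under $\bar\phi^{-1}$ to a unique representative $g_j(x_1)$ of degree less than $d$, giving the relations $x_j - g_j \in I$; and distinctness of the $x_1$-coordinates of $V(I)$ will fall out of the structural analysis of $\bar\phi$ along the way.

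The technical heart is a pair of Chinese Remainder Theorem decompositions. Since $g_1$ is square-free of degree $d$ with roots $a_1, \dots, a_d$, evaluation gives $\mathbb{C}[x_1]/\langle g_1\rangle \cong \prod_{i=1}^d \mathbb{C}$. Since $I$ is zero-dimensional, $\mathbb{C}[x]/I \cong \prod_{p \in V(I)} R_p$, where each $R_p$ is a local Artinian $\mathbb{C}$-algebra with maximal ideal $\mathfrak{m}_p$. In each $R_p$, the equation $0 = g_1 = \prod_i (x_1 - a_i)$ holds; exactly one factor $(x_1 - p_1)$ lies in $\mathfrak{m}_p$, while every other factor has nonzero constant term $a_i - p_1$ and is therefore a unit. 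Cancelling units yields $x_1 = p_1$ in $R_p$, so $x_1$ acts as the scalar $p_1$ on each local factor. Matching $\mathbb{C}$-dimensions then forces each $R_p \cong \mathbb{C}$ and the map $p \mapsto p_1$ to be a bijection from $V(I)$ onto $\{a_1, \dots, a_d\}$. This simultaneously gives the first conclusion, makes $\bar\phi$ an isomorphism (both sides become $\mathbb{C}^d$, matched factor by factor), and produces each $g_j$ by Lagrange interpolation, namely $g_j(x_1) = \sum_{p \in V(I)} p_j \prod_{q \neq p} \frac{x_1 - q_1}{p_1 - q_1}$, which has degree less than $d$.

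Finally, I would verify the Gr\"obner basis claim. Under a lexicographic order with $x_1 \prec x_2 \prec \dots \prec x_m$, the leading terms of $g_1, x_2 - g_2, \dots, x_m - g_m$ are $x_1^d, x_2, \dots, x_m$. Any $f \in I$ reduces modulo the $x_j - g_j$ to a polynomial $\tilde f(x_1) \in I \cap \mathbb{C}[x_1] = \langle g_1 \rangle$, which in turn reduces to zero modulo $g_1$; hence the displayed leading terms generate $\init_\prec I$. The main obstacle is the dimension-matching step: the hypothesis $|V(I)| = d$ must be interpreted as $\dim_{\mathbb{C}} \mathbb{C}[x]/I = d$ (equivalently, $I$ radical), since otherwise examples such as $I = \langle x_1, x_2^2 \rangle$ meet the set-theoretic hypothesis without satisfying the conclusion. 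Converting the set-theoretic count into a $\mathbb{C}$-vector space count is exactly what allows each $R_p$ to collapse to $\mathbb{C}$ and the Lagrange interpolation argument to close.
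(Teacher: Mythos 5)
The paper does not actually prove this theorem --- it is quoted from \cite{Shape} and used as a black box --- so your proposal can only be judged on its own terms. Your overall strategy (decompose $\mathbb{C}[x]/I$ into its local Artinian factors $R_p$, show that $x_1$ acts on each $R_p$ as the scalar $p_1$ by splitting $g_1$ into linear factors and inverting the unit factors, then read off all three conclusions from the induced map $\bar\phi$) is the standard route, and your observation that the set-theoretic hypothesis $\lvert V(I)\rvert = d$ must be upgraded to $\dim_{\mathbb{C}}\mathbb{C}[x]/I = d$ (i.e.\ $I$ radical) is correct and well illustrated by $\langle x_1, x_2^2\rangle$; in the paper's application this is harmless because the ideal is explicitly taken to be $I\big(V(I)\big)$.

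There is, however, a genuine gap at the step ``matching $\mathbb{C}$-dimensions then forces \dots\ the map $p\mapsto p_1$ to be a bijection from $V(I)$ onto $\{a_1,\dots,a_d\}$.'' Dimension matching does force each $R_p\cong\mathbb{C}$, but it does not force injectivity of $p\mapsto p_1$, and without injectivity $\bar\phi$ is not surjective: its image is the subalgebra of $\prod_{p}\mathbb{C}$ generated by the single tuple $(p_1)_p$, which separates the factors only when the $p_1$ are pairwise distinct. Concretely, take $I=\langle x_1,\,x_2^2-x_2\rangle$ in $\mathbb{C}[x_1,x_2]$: it is radical, $V(I)=\{(0,0),(0,1)\}$ so $d=2$, and $g_1=x_1^2+x_1\in I$ is square-free of degree $2$; yet both points have $x_1$-coordinate $0$, $\bar\phi$ has one-dimensional image, and no shape basis exists. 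So distinctness of the $x_1$-coordinates --- the first conclusion --- cannot ``fall out'' of the structural analysis; either it, or the hypothesis that $g_1$ generates $I\cap\mathbb{C}[x_1]$ (i.e.\ that $g_1$ is an eliminant in the sense the paper defines immediately after the theorem, so that every root of $g_1$ is attained as a coordinate and counting then gives injectivity), must be added to the statement. Once both repairs are made, the remainder of your argument --- Lagrange interpolation producing the $g_j$ of degree less than $d$, and reduction of any $f\in I$ by the $x_j-g_j$ to an element of $I\cap\mathbb{C}[x_1]=\langle g_1\rangle$ to verify the Gr\"obner basis claim --- goes through.
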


The polynomial $g_1$ in Theorem \ref{Shape Lemma} is an example of an \emph{eliminant} of $I$; that is, it is a minimal degree univariate polynomial $g(x_1) \in I$. Clearly, $g(x_1)$ vanishes at the $x_1$-coordinates of the points in $V(I)$. On the other hand, the univariate polynomial that vanishes only on the $x_1$-coordinates of the points in $V(I)$ is in $I$ and, by definition of the eliminant, must have degree no smaller than that of an eliminant. Thus, we see the roots of an eliminant are the $x_1$-coordinates of the points in $V(I)$. 

We will now show one way to find an eliminant of a given ideal.  For an ideal $I \subset \mathbb{C}[x]$ satisfying the hypotheses of Theorem \ref{Shape Lemma}, we consider $V=\mathbb{C}[x]/I$ as a $\mathbb{C}$-vector space and define the ``multiplication by $x_i$'' map $m_{x_i}: V \rightarrow V$. Since $V$ is a finite-dimensional vector space, we can choose and order a basis of $V$ to write $m_{x_i}$ as a square matrix. The \emph{characteristic polynomial} of this map may then be computed by $\det(m_{x_i}-tI_r)$ where $m_{x_i}$ is an $r \times r$ matrix and $I_r$ is the $r \times r$ identity matrix. The roots of the characteristic polynomial are the \emph{eigenvalues} of the map.

\begin{theorem}[Corollary 4.6 from Chapter 2 of \cite{UAG}]\label{thm: eliminant}
	Let $I \subset \mathbb{C}[x]$ be zero-dimensional and $V=\mathbb{C}[x]/I$. Then the eigenvalues of the multiplication operator $m_{x_i}$ on $V$ coincide with the $x_i$-coordinates of the points of $V(I)$.
\end{theorem}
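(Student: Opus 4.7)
The plan is to prove equality of the two sets by mutual containment, using two rather different tools: explicit eigenvectors built from evaluation functionals for one direction, and the Hilbert Nullstellensatz to pass from algebra to geometry for the other.

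For the ``$\supseteq$'' direction, I would fix a point $p = (p_1, \dots, p_m) \in V(I)$ and produce an eigenvector of the transpose $m_{x_i}^{T}$ on the dual space $V^{*}$; since $m_{x_i}^{T}$ and $m_{x_i}$ share eigenvalues, this suffices. The natural candidate is the evaluation functional $\mathrm{ev}_p \colon V \to \mathbb{C}$ given by $\mathrm{ev}_p([f]) := f(p)$, which is well-defined because $I \subseteq \ker \mathrm{ev}_p$ and nonzero because $\mathrm{ev}_p([1]) = 1$. A direct calculation then yields
\[
\mathrm{ev}_p\bigl( m_{x_i}([f]) \bigr) \;=\; \mathrm{ev}_p([x_i f]) \;=\; p_i\, f(p) \;=\; p_i\, \mathrm{ev}_p([f]),
\]
so $\mathrm{ev}_p$ is a left eigenvector of $m_{x_i}$ with eigenvalue $p_i$.

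For the ``$\subseteq$'' direction, suppose $\lambda$ is an eigenvalue of $m_{x_i}$. Then $m_{x_i - \lambda} = m_{x_i} - \lambda\cdot\mathrm{id}$ is singular on $V$. Since $V$ is a finite-dimensional $\mathbb{C}$-algebra (using the zero-dimensionality of $I$), an element acts invertibly by multiplication if and only if it is a unit, so $x_i - \lambda$ is not a unit in $\mathbb{C}[x]/I$. Equivalently, $I + \langle x_i - \lambda \rangle$ is a proper ideal of $\mathbb{C}[x]$, so by the Hilbert Nullstellensatz $V(I + \langle x_i - \lambda \rangle)$ is nonempty. Any point $p$ in this variety lies in $V(I)$ and has $p_i = \lambda$.

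The main obstacle is the second direction, which genuinely relies on working over an algebraically closed field: the Nullstellensatz is what converts the purely algebraic fact that $x_i - \lambda$ fails to be a unit modulo $I$ into the \emph{geometric} existence of a point of $V(I)$ whose $i$-th coordinate equals $\lambda$. The first direction, by contrast, is essentially a bookkeeping calculation once one has the right candidate eigenvector, and does not use the finite-dimensionality of $V$ at all.
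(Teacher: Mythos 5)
Your proof is correct, but it takes a genuinely different route from the paper in one of the two directions. For ``eigenvalue $\Rightarrow$ coordinate,'' you and the paper are doing essentially the same thing: both arguments hinge on the fact that if $\lambda$ avoided every $x_i$-coordinate of $V(I)$ then $x_i-\lambda$ would be a unit in $V$, and both invoke the Nullstellensatz to make that work; the paper phrases it as a contradiction via the strong Nullstellensatz (building an inverse of $x_i-\lambda$ from $(1-gh)^\ell\in I$ and contradicting $gv=0$), whereas you argue directly through the weak Nullstellensatz applied to the proper ideal $I+\langle x_i-\lambda\rangle$ --- a slightly cleaner packaging of the same idea. For ``coordinate $\Rightarrow$ eigenvalue,'' your arguments genuinely diverge: the paper takes the minimal polynomial $h$ of $m_{x_1}$, notes $h(m_{x_1})=0$ is equivalent to $h(x_1)\in I$ so that $h$ (and hence the characteristic polynomial it divides) vanishes at every $x_1$-coordinate of $V(I)$; you instead exhibit the evaluation functional $\mathrm{ev}_p$ as an explicit left eigenvector with eigenvalue $p_i$. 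Your version is more constructive and is the one that underlies eigenvector methods for solving polynomial systems (the $\mathrm{ev}_p$ are simultaneous left eigenvectors of all the $m_{x_i}$), while the paper's version delivers the slightly stronger byproduct that the polynomial $h(x_1)$ lies in $I$, which connects to the eliminant discussion that follows the theorem. One small caveat: your closing remark that the evaluation-functional direction ``does not use the finite-dimensionality of $V$ at all'' is not quite right --- the step where you pass from a left eigenvector of $m_{x_i}$ to an eigenvalue of $m_{x_i}$ itself (equality of spectra of an operator and its transpose) does require $\dim V<\infty$, which is exactly where zero-dimensionality of $I$ enters that half of your argument.
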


We provide a proof of Theorem \ref{thm: eliminant} similar to the proof of a more general result provided in \cite{UAG}.

\begin{proof}
	Without loss of generality, we assume $i=1$. We begin by showing that eigenvalues of $m_{x_1}$ are $x_1$-coordinates of $V(I)$. Let $\lambda$ be an eigenvalue of $m_{x_1}$ and $v \neq 0$ a corresponding eigenvector such that $(x_1-\lambda)v=0$. Assume for contradiction that $\lambda$ is not an $x_1$ coordinate of any point of $\{p_1,\dots,p_n\}=V(I)$. Let $g=x_1-\lambda$. Then $g(p_i) \neq 0$, thus there exists a polynomial $h$ such that $g(p_i)h(p_i)=1$ for all $i$. We now have $1-gh \in I(V(I))$, and thus $(1-gh)^\ell \in I$ for some $\ell \geq 1$. Expanding this expression, we find $1-g\bar{h} \in I$ for some $\bar{h} \in \mathbb{C}[x]$. Thus, $g$ is invertible in $V$. However, $gv=0$ in $V$ with $v \neq 0$, which contradicts the conclusion that $g$ is a unit. Thus, $\lambda$ must be an $x_1$-coordinate of a point in $V(I)$.
	
	We now show that the $x_1$-coordinates of the points in $V(I)$ are eigenvalues of $m_{x_1}$. First, we construct a polynomial, $h$, that divides the characteristic polynomial and vanishes on $V(I)$. Since $V$ is a finite-dimensional, $\{1,x,x^2,\dots\}$ is a linearly dependent set in $V$. Let $h=\sum_{i=0}^q c_iy^i \in \mathbb{C}[y]$ be the minimal degree monic polynomial such that $h(m_{x_1})=0$ in $V$. Since the set of all polynomials in $\mathbb{C}[y]$ that vanish at $m_{x_1}$ is an ideal, all ideals in $\mathbb{C}[y]$ are principal, and $h$ was chosen as a minimal such element, we must have that all polynomials in $\mathbb{C}[y]$ that vanish at $m_{x_1}$ are divisible by $h$. In particular, the characteristic polynomial of $m_{x_1}$ is divisible by $h$. Moreover, by the definition of $V$, $h(m_{x_1})=0$ is equivalent to $h(x_1) \in I$. Therefore $h$ vanishes on $V(I)$. Hence, the characteristic polynomial of $m_{x_1}$ vanishes on $V(I)$ as well.
\end{proof}

We will need one more result on ideals for the theoretical aspects for our algorithm. In particular, we want to be able to find an element that is equivalent to several other elements modulo respective ideals. As long as the ideals involved, say $I_1$ and $I_2$, have elements $r_1 \in I_1$ and $r_2 \in I_2$ such that $r_1+r_2 = 1$, then we can use this equation to find the desired elements.

\begin{theorem}[Chinese Remainder Theorem]\label{thm: Chinese Remainder}
	Let $R$ be a ring and $I_1, \dots, I_s$ ideals of $R$ such that $I_i+I_j=R$ when $i \neq j$. Given elements $r_1,\dots,r_s \in R$, there exists $r \in R$ such that $r \equiv r_i \mod I_i$ for all $i$.
\end{theorem}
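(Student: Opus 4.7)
The plan is to reduce the problem to a single construction: for each $i$, produce an element $e_i \in R$ with $e_i \equiv 1 \bmod I_i$ and $e_i \equiv 0 \bmod I_j$ for every $j \neq i$. Once these ``approximate idempotents'' are in hand, the element $r := \sum_{i=1}^{s} r_i e_i$ immediately satisfies $r \equiv r_i \bmod I_i$ for each $i$: every term but the $i$-th lies in $I_i$, while the $i$-th term reduces to $r_i$ modulo $I_i$.

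To construct $e_i$, I would exploit the pairwise coprimality hypothesis. For each $j \neq i$, since $I_i + I_j = R$, choose $a_{ij} \in I_i$ and $b_{ij} \in I_j$ with $a_{ij} + b_{ij} = 1$. Set $e_i := \prod_{j \neq i} b_{ij}$. Each factor lies in $I_j$ (for its corresponding $j \neq i$), so the product lies in $I_j$ for every $j \neq i$; meanwhile, each factor $b_{ij} = 1 - a_{ij}$ is congruent to $1$ modulo $I_i$, so their product is as well. This handles both congruence conditions simultaneously.

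The main obstacle, such as it is, is really just verification rather than a genuine difficulty: one must confirm carefully that a product of elements each congruent to $1$ modulo $I_i$ is congruent to $1$ modulo $I_i$ (expanding $\prod(1 - a_{ij})$ and noting every term other than $1$ contains some $a_{ij} \in I_i$), and that a product containing a factor in $I_j$ is itself in $I_j$. The edge cases $s = 1$ (take $r := r_1$) and $s = 2$ (the construction collapses to the standard two-ideal argument $r := r_1 b_{12} + r_2 b_{21}$, where $b_{12} \in I_2$ and $b_{21} \in I_1$) should be noted for completeness.

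An alternative route would be induction on $s$, handling the base case $s = 2$ directly and then, for the step, proving that $I_1$ is coprime to $\bigcap_{j=2}^{s} I_j$ so as to reduce to the two-ideal case. I would avoid this route because it requires a separate lemma (pairwise coprimality implies coprimality with the intersection), which itself is most cleanly proved by essentially the same product trick above. The direct construction therefore seems both shorter and more transparent.
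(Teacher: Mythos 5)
Your proposal is correct and is essentially the argument the paper gives: the paper also builds elements $\widehat{r}_j$ congruent to $1$ modulo $I_j$ and to $0$ modulo the other ideals by expanding a product of relations $a+b=1$ coming from pairwise comaximality, and then sets $r=\sum_j r_j\widehat{r}_j$. The only difference is organizational --- the paper first proves the $s=2$ case and shows $I_1+\prod_{i\geq 2}I_i=R$ before invoking it, whereas you construct each $e_i=\prod_{j\neq i}b_{ij}$ directly --- but the resulting elements and the underlying product trick are the same.
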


\begin{proof}
	Let $R$ and $I_1,\dots,I_s$ be as above. We first prove the theorem for $s=2$. In this case, we desire an $r \in R$ such that $r \equiv r_1 \mod I_1$ and $r \equiv r_2 \mod I_2$. Since $I_1+I_2=R$, there exists $a_1 \in I_1$ and $a_2 \in I_2$ such that $1=a_1+a_2$. Let $r=r_2a_1+r_1a_2$. Since $a_1 \equiv 0 \mod I_1$ and $a_2 \equiv 1 \mod I_1$, we see that $r \equiv r_1 \mod I_1$. Similarly, $r \equiv r_2 \mod I_2$.
	
	We now consider the general case. Since $I_i+I_j=R$ for all $i \neq j$, we may find elements $a_1,\dots,a_s$, where $a_i \in I_i$ such that $a_1+a_i=1$ for all $i>1$. Therefore, $\prod_{i=2}^s (a_1+a_i) =1$, which gives us $I_1+\prod_{i=2}^s I_i = R$. Thus, by the above argument, we may find $\widehat{r}_1 \in R$ such that $\widehat{r}_1 \equiv 1 \mod I_1$ and $\widehat{r}_1 \equiv 0 \mod \prod_{i=2}^s I_i$. Similarly, for $j=2,3,\dots,s$, we may find  $\widehat{r}_j$ such that $\widehat{r}_j \equiv 1 \mod I_j$ and $\widehat{r}_j \equiv 0 \mod \prod_{i\neq j} I_i$. Thus, if we set $r= r_1\widehat{r}_1+\dots+r_s\widehat{r}_s$, we get $r \equiv r_i \mod I_i$ for all $i$.
\end{proof}

\section{Galois Theory}\label{numbtheory}

We will now define what a Galois group is and develop the theory we used to sample from the Galois group of our Schubert problems following \cite{Hungerford, Lang}. Galois groups were classically studied in the context of field extensions. A field $K$ is an \emph{extension} of $F$ if $F$ is a subfield of $K$. The \emph{degree} of the extension $K/F$ is the dimension of $K$ as an $F$-vector space. The structure of $K$ as a field extension of $F$ is encoded by the structure of the field automorphisms of $K$ that are also $F$-module homomorphisms, called \emph{$F$-automorphisms}.

\begin{definition}
	The group of all $F$-automorphisms of $K$ is called the \emph{Galois group} of K over F, denoted $\g(K/F)$. Moreover, the extension $K/F$ is \emph{Galois} if for any $u \in K-F$, there exists some $\sigma \in \g(K/F)$ such that $\sigma(u) \neq u$.
\end{definition}

For any subgroup $G < \g(K/F)$, $\{k \in K \mid \sigma(k)=k \text{ for all } \sigma \in G\}$ is a field. This is called the \emph{fixed field} of $G$ in $K$. The Galois groups we will be looking at will be of field extensions with some structure. An element $u \in K$ is called \emph{algebraic} over $F$ if there exists a polynomial $f \in F[x]$ such that $f(u)=0$. If every element of $K$ is algebraic over $F$, then we say that $K$ is an \emph{algebraic extension} of $F$. For an algebraic element $u \in K$, we say that $u$ is \emph{separable} if the irreducible polynomial in $F[x]$ that vanishes at $u$ may be factored into linear factors in $K[x]$ and all of its roots are simple roots. If every element of $K$ is separable over $F$, then we say that $K$ is a \emph{separable extension} over $F$.

\begin{theorem}[Primitive Element Theorem \cite{Hungerford}]\label{primitive element theorem}
	Let $K/F$ be a finite degree separable extension. Then $K=F(\alpha)$ for some $\alpha \in K$.
\end{theorem}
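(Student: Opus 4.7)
The plan is to split the argument into two cases according to whether $F$ is finite or infinite. If $F$ is finite, then $K$, being a finite-degree extension, is also a finite field, so its multiplicative group $K^\times$ is cyclic; taking $\alpha$ to be any generator of $K^\times$ immediately gives $K = F(\alpha)$. The substantive case is when $F$ is infinite, and I plan to reduce it by induction on the number of generators: since $K/F$ is finite it can be written as $K = F(\beta_1, \dots, \beta_n)$, and it suffices to prove that $F(\beta,\gamma) = F(\alpha)$ for some single element $\alpha$ whenever $\beta,\gamma \in K$.

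For the two-generator step, let $f, g \in F[x]$ be the minimal polynomials of $\beta$ and $\gamma$, and work in a splitting field $L$ of $fg$ over $F$. Write the roots as $\beta = \beta_1, \dots, \beta_r$ and $\gamma = \gamma_1, \dots, \gamma_s$; the separability hypothesis guarantees these lists have no repeats, so in particular $\gamma - \gamma_j \neq 0$ for $j \geq 2$. Since $F$ is infinite, I can choose $c \in F$ that avoids the finitely many forbidden values
\[
\frac{\beta_i - \beta}{\gamma - \gamma_j}, \qquad 1 \leq i \leq r,\ 2 \leq j \leq s,
\]
and set $\alpha := \beta + c\gamma$. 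The inclusion $F(\alpha) \subseteq F(\beta,\gamma)$ is immediate, so the goal is to show $\gamma \in F(\alpha)$, as then $\beta = \alpha - c\gamma$ will follow.

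To do this I would introduce the auxiliary polynomial $h(x) := f(\alpha - cx) \in F(\alpha)[x]$ and observe that $h(\gamma) = f(\beta) = 0$, while the choice of $c$ forces $h(\gamma_j) \neq 0$ for $j \geq 2$ (otherwise $\alpha - c\gamma_j = \beta_i$ for some $i$, contradicting the exclusion above). Thus $h$ and $g$ share only the single root $\gamma$ in $L$, and since $\gamma$ is a simple root of $g$ by separability, their $\gcd$ computed in $F(\alpha)[x]$ is exactly $x - \gamma$. Because the $\gcd$ lies in $F(\alpha)[x]$, this yields $\gamma \in F(\alpha)$, completing the two-generator case; the inductive step from $n$ to $n{+}1$ generators then follows by applying this result to $F(\beta_1,\dots,\beta_{n-1})(\beta_n,\beta_{n+1})$ using that subextensions of separable extensions remain separable.

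The main obstacle, and the place where both hypotheses are used essentially, is the choice of $c$: the infinitude of $F$ is what allows us to avoid a finite set of forbidden ratios, and separability is what makes the denominators $\gamma - \gamma_j$ nonzero and what makes $\gamma$ a simple root so that the gcd of $h$ and $g$ is linear rather than a higher power of $(x-\gamma)$. The finite-field case needs a separate argument because this "generic linear combination" trick fails there, which is precisely why I handle it first via the cyclicity of $K^\times$.
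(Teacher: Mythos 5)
Your proof is correct and complete. The paper itself offers no proof of this statement: it is quoted from \cite{Hungerford} and used purely as a black box (its only role is to furnish a primitive element for the function-field extension $K(Y)/K(X)$ in the construction of the Galois group, and implicitly for the number fields arising from eliminants). Your argument is the classical constructive one: dispose of the finite-field case via cyclicity of $K^\times$, reduce the infinite case to two generators by induction, and for $F(\beta,\gamma)$ choose $c\in F$ avoiding the finitely many ratios $(\beta_i-\beta)/(\gamma-\gamma_j)$ so that $h(x)=f(\alpha-cx)$ and $g$ share only the simple root $\gamma$, whence $\gcd(h,g)=x-\gamma$ already over $F(\alpha)$ (the monic gcd being insensitive to field extension, by the Euclidean algorithm). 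All the hypotheses are used where you say they are: separability of $\gamma$ makes the denominators nonzero and the common root simple, and infinitude of $F$ permits the choice of $c$. The only cosmetic remark is that separability of $\beta$ is never actually needed in the two-generator step (you only use that the roots of $f$ in $L$ form a finite list, not that they are distinct), which is why the standard statement of this lemma assumes separability of one generator only; since $K/F$ is separable here, this costs you nothing.
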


We will want to take advantage of this property when dealing with the fields $\mathbb{Q}$ and $\mathbb{C}$. As we will now show, all field extensions of these fields are separable.

\begin{lemma}[part (iii) of Theorem III.6.10 in \cite{Hungerford}] \label{f prime 0}
	Suppose $F$ is a field, $f \in F[x]$ is irreducible, and $K$ contains a root of $f$. Then $f$ has no multiple roots in $K$ if and only if $f^\prime \neq 0$.
\end{lemma}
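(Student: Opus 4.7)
The plan is to prove the two directions separately, exploiting the irreducibility of $f$ together with a gcd argument in $F[x]$ for one direction and the Frobenius identity in positive characteristic for the other.

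First I would handle the ``$\Leftarrow$'' direction by contrapositive: assume $f' \neq 0$ and show that no $\alpha \in K$ is a multiple root of $f$. Since $f$ is irreducible in $F[x]$ and $\deg f' < \deg f$, the polynomial $d := \gcd(f, f')$ in $F[x]$ divides $f$ but has degree strictly smaller than $\deg f$, so $d$ must be a nonzero constant. By B\'ezout in the PID $F[x]$, there exist $a, b \in F[x]$ with $af + bf' = 1$. If some $\alpha \in K$ were a multiple root of $f$, then writing $f = (x-\alpha)^2 h$ in $K[x]$ and applying the product rule gives $f(\alpha) = f'(\alpha) = 0$; substituting $\alpha$ into $af + bf' = 1$ yields $0 = 1$, a contradiction.

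For the ``$\Rightarrow$'' direction I would also argue by contrapositive, assuming $f' = 0$ and producing a multiple root. Split on characteristic: if $\operatorname{char}(F) = 0$, then $f' = 0$ forces $f$ to be constant, contradicting irreducibility, so this sub-case is vacuous. If $\operatorname{char}(F) = p > 0$ and $f' = 0$, then every nonzero term of $f$ has exponent divisible by $p$, so $f(x) = g(x^p)$ for some $g \in F[x]$. Let $\alpha \in K$ be the given root, so $g(\alpha^p) = 0$. Using the Frobenius identity $x^p - \alpha^p = (x-\alpha)^p$ in $K[x]$, I would factor $g(y) - g(\alpha^p) = (y - \alpha^p)\,\tilde g(y)$ for some $\tilde g \in K[y]$, substitute $y = x^p$, and conclude
\[
f(x) \;=\; g(x^p) - g(\alpha^p) \;=\; (x^p - \alpha^p)\,\tilde g(x^p) \;=\; (x-\alpha)^p\,\tilde g(x^p),
\]
which exhibits $\alpha$ as a multiple root in $K$.

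The main obstacle is not technical depth but bookkeeping: one must remember that the hypothesis only provides a single root $\alpha \in K$ (not a splitting field of $f$), so the characteristic-$p$ argument has to produce the multiplicity of $\alpha$ directly from the structural form $f(x) = g(x^p)$ rather than from any global factorization of $f$ over $K$. Beyond this, the proof rests only on the Euclidean structure of $F[x]$ and the freshman's-dream identity in characteristic $p$.
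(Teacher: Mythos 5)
Your proof is correct. Note, however, that the paper itself gives no proof of this lemma: it is stated with the bracketed attribution to part (iii) of Theorem III.6.10 in Hungerford and the proof environment that follows it belongs to the next theorem (separability of algebraic extensions in characteristic $0$), so there is no in-paper argument to compare against. What you have written is the standard textbook argument and it holds up: for ``$\Leftarrow$'' the irreducibility of $f$ together with $0 \neq f'$ and $\deg f' < \deg f$ forces $\gcd(f,f')$ to be a unit, and the B\'ezout identity $af+bf'=1$, being an identity in $F[x]\subset K[x]$, rules out any common root of $f$ and $f'$ in $K$ (and hence any multiple root, by your product-rule computation); for ``$\Rightarrow$'' the characteristic-$0$ case is vacuous and in characteristic $p$ the identity $f(x)=g(x^p)=(x-\alpha)^p\,\tilde g(x^p)$ exhibits the given root $\alpha\in K$ with multiplicity at least $p\ge 2$. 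You are also right to flag that the hypothesis supplies only one root $\alpha\in K$ rather than a splitting field, and your characteristic-$p$ argument correctly extracts the multiplicity of that specific $\alpha$ without needing $f$ to split over $K$.
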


\begin{theorem}[Noted in a remark on page 261 of \cite{Hungerford}]
	Every algebraic extension of a field of characteristic $0$ is separable.
\end{theorem}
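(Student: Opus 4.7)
The plan is to reduce the statement to a computation about derivatives of irreducible polynomials, where the characteristic $0$ hypothesis plays the decisive role. Let $K/F$ be an algebraic extension with $\mathrm{char}(F) = 0$, and let $u \in K$ be arbitrary. To show $u$ is separable over $F$, I need to exhibit its minimal polynomial as a product of distinct linear factors in any splitting field. Equivalently, by Lemma \ref{f prime 0}, it suffices to show that if $f \in F[x]$ is the (monic) irreducible polynomial of $u$ over $F$, then $f' \neq 0$.

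First I would set up the minimal polynomial. Since $u$ is algebraic over $F$, the kernel of the evaluation map $F[x] \to K$, $g \mapsto g(u)$, is a nonzero prime ideal in the principal ideal domain $F[x]$, hence is generated by a unique monic irreducible polynomial $f$. In particular $\deg f \geq 1$, so I may write
\[
f(x) = x^n + a_{n-1}x^{n-1} + \cdots + a_1 x + a_0, \qquad n \geq 1.
\]
Then $f'(x) = n x^{n-1} + (n-1)a_{n-1}x^{n-2} + \cdots + a_1$, and the leading coefficient of $f'$ is $n \cdot 1$. Because $\mathrm{char}(F) = 0$, the integer $n \geq 1$ is a unit in $F$, so $n \neq 0$ in $F$, and consequently $f' \neq 0$ as an element of $F[x]$.

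Now I apply Lemma \ref{f prime 0} to conclude. The lemma says that for an irreducible $f \in F[x]$ with a root in some extension $K$, the polynomial has no multiple roots in $K$ precisely when $f' \neq 0$. Since I have just shown $f' \neq 0$, the minimal polynomial of $u$ has only simple roots, which is exactly the definition of $u$ being separable over $F$. As $u \in K$ was arbitrary, every element of $K$ is separable, i.e.\ $K/F$ is a separable extension.

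The only real subtlety, and the single place where the characteristic hypothesis is used, is the nonvanishing of $n$ in $F$ for $n \geq 1$; in positive characteristic $p$ this can fail when $f$ is a polynomial in $x^p$, which is exactly how inseparable extensions arise. There is no hard step here, so the ``main obstacle'' is really just ensuring that the minimal polynomial exists as a bona fide element of $F[x]$ (handled by algebraicity of $u$) and that one cites Lemma \ref{f prime 0} rather than re-proving it.
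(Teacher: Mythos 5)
Your proof is correct and follows essentially the same route as the paper: reduce separability to the nonvanishing of $f'$ for the irreducible polynomial $f$, use characteristic $0$ to conclude $f' \neq 0$, and invoke Lemma \ref{f prime 0}. The only difference is cosmetic --- you get $f' \neq 0$ immediately from the leading coefficient $n\cdot 1 \neq 0$, whereas the paper asserts $\deg(f') = d-1$ and then runs a slightly roundabout argument evaluating $f'$ at the points $1,2,\dots,d$; your version is the cleaner one.
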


\begin{proof}
	Let $F$ be a field of characteristic $0$ and $f \in F[x]$ be an irreducible polynomial of degree $d>0$. Then we have $\deg(f^\prime)=d-1$. Since $F$ is of characteristic $0$, the elements $1,2,\dots,d$ are all distinct in $F$. If $f^\prime$ vanishes at all of these elements, then $(x-a)$ would be a linear factor of $f^\prime$ for $a=1,2,\dots,d$. However, $\deg(f^\prime)=d-1$, so it cannot have $d$ linear factors. Thus, $f^\prime$ does not vanish at one of $1,2,\dots,d$. Since $f^\prime \neq 0$, any extension of $F$ must be separable by Lemma \ref{f prime 0}. 
\end{proof}

We will find it useful to study $\g(K/F)$ by looking at special subgroups of the Galois group. Given a ring $A$ contained in the field $F$, we say that $k \in F$ is \emph{integral} over $A$ if there exists a monic polynomial $f(x) \in A[x]$ such that $f(k)=0$. Furthermore, if we have two rings $A \subset B$, we say that $B$ is \emph{integral} over $A$ if every element of $B$ is integral over $A$. If the ring $A \subset F$ contains every element of $F$ that is integral over $A$, we say that $A$ is \emph{integrally closed} in $F$.

Assume $K/F$ a finite extension, and let $A$ be the smallest integrally closed ring in $F$ containing $\mathbb{Z}$. Let $\mathfrak{a} \subset A$ and $\mathfrak{b} \subset B$ be prime ideals. We say that $\mathfrak{b}$ \emph{lies above} $\mathfrak{a}$ if $\mathfrak{b} \cap A = \mathfrak{a}$. In this case, we have the following commutative diagram where the horizontal maps are the canonical homomorphisms and the vertical maps are inclusions.
\begin{equation}
\begin{tikzcd}
B \arrow{r} & B/\mathfrak{b}  \\
A \arrow{r} \arrow{u} & A/\mathfrak{a} \arrow{u}
\end{tikzcd}
\label{diagram}
\end{equation}
\noindent Moreover, we may show that such a structure always exists. Before we do so, however, we will need to take a brief detour into the structure of rings and modules. 

When $B$ is integral over $A$, then $A[b_1,\dots,b_q]$ for $b_i \in B$ is a finitely-generated $A$-module. To see this, we note that since $b_i$ is integral over $A$, it satisfies a relation of the form $b_i^{t_i}+a_{i,t_i-1}b^{t_i-1}+\dots+a_{i,0}=0$ for some $a_{i,j} \in A$. Thus, any element of $A[b_1,\dots,b_q]$ may be rewritten as $\sum a_\alpha b^\alpha$ where $b^\alpha= (b_1^{\alpha_1},\dots,b_q^{\alpha_q})$ and $\alpha_i < t_i$. We will soon find it useful to apply the following in this setting.

\begin{theorem}[Nakayama's Lemma] \label{thm: Nakayama}
	Let $A$ be a ring, $I$ an ideal contained in all maximal ideals of $A$, and $M$ a finitely generated $A$-module. If $IM=M$, then $M=0$.
\end{theorem}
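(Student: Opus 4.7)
The plan is to argue by contradiction: assume $M \neq 0$, pick a minimal generating set, and use the hypothesis $IM = M$ to show one of the generators is redundant. Since $M$ is finitely generated and nonzero, there exists a minimal generating set $\{m_1, \dots, m_n\}$ with $n \geq 1$ (minimal in the sense that no proper subset generates $M$).

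Using $IM = M$, write $m_n$ as an $I$-linear combination of the generators: $m_n = i_1 m_1 + \cdots + i_n m_n$ for some $i_1, \dots, i_n \in I$. Rearranging gives
\[
(1 - i_n)\, m_n = i_1 m_1 + \cdots + i_{n-1} m_{n-1}.
\]
The crucial step is to show that $1 - i_n$ is a unit in $A$. If not, $1 - i_n$ would lie in some maximal ideal $\mathfrak{m}$ of $A$. By hypothesis $I \subseteq \mathfrak{m}$, so $i_n \in \mathfrak{m}$, and then $1 = (1 - i_n) + i_n \in \mathfrak{m}$, contradicting the properness of $\mathfrak{m}$. Hence $1 - i_n$ is invertible, and multiplying the displayed equation by $(1 - i_n)^{-1}$ expresses $m_n$ as an $A$-linear combination of $m_1, \dots, m_{n-1}$.

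This shows that $\{m_1, \dots, m_{n-1}\}$ already generates $M$, contradicting the minimality of the chosen generating set. Therefore the assumption $M \neq 0$ is untenable, and $M = 0$.

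The only step with real content is showing $1 - i_n$ is a unit; this is where the hypothesis that $I$ lies in every maximal ideal is used. Everything else is bookkeeping with a minimal generating set. I do not foresee a significant obstacle, but one small point to be careful about is the existence of a minimal generating set: since $M$ is finitely generated, the set of sizes of generating sets is a nonempty subset of $\mathbb{N}$, so it has a least element, which justifies the choice.
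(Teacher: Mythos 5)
Your proof is correct and follows essentially the same route as the paper's: contradiction via a minimal generating set, rearranging $IM=M$ to isolate $(1-i_n)m_n$, and invoking the hypothesis that $I$ lies in every maximal ideal to conclude $1-i_n$ is a unit. The only cosmetic difference is that you spell out why a non-unit must lie in some maximal ideal, a step the paper leaves implicit.
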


\begin{proof}
	Let $A, I, \text{ and } M$ be as above, and assume for contradiction that $IM=M\neq 0$. Since $M \neq 0$ is finitely generated, there exists a minimal nonempty set $\{m_1,\dots,m_s\}$ that generates $M$ as an $A$-module. Since $IM=M$, there exists an expression $m_s=a_1m_1+\dots+a_sm_s$ for some $a_i \in I$. Hence, $(1-a_s)m_s=a_1m_1+\dots+a_{s-1}m_{s-1}$.
	
	Since $I$ is contained in every maximal ideal of $A$, $a_s$ is an element of every maximal ideal of $A$. Therefore, $1-a_s$ is in no maximal ideal of $A$ and must be a unit. Let $a=(1-a_s)^{-1}$. Then $m_s=aa_1m_1+\dots+aa_{s-1}m_{s-1}$. Thus, $M$ may be generated by a set of size $s-1$, a contradiction.
\end{proof}

For rings, we will need to consider localization. Let $A$ be a commutative ring and $S$ a subset of $A$ that is closed under multiplication such that $1 \in S$ and $0 \notin S$. Then we define the ring
\[
  S^{-1}A=\Big\{ \frac{a}{s} \Bigm| a \in A, s \in S \Big\}\Big/\sim
\]
where $\frac{a_1}{s_1} \sim \frac{a_2}{s_2}$ if there exists some $s \in S$ such that $s(a_1s_2-a_2s_1)=0$. In this ring, addition and multiplication are defined the same way as the operations are when considering $\mathbb{Q}$ as $(\mathbb{Z}-\{0\})^{-1}\mathbb{Z}$. When $S=A-P$ for some prime ideal $P \subset A$, we write $S^{-1}A=A_P$. We call this the ring \emph{$A$ localized at $P$}. When a ring has a unique maximal ideal, we say it is a \emph{local ring}. 

\begin{theorem}
	If $A$ is a commutative ring and $P$ is a prime ideal of $A$, then $A_P$ is a local ring.
\end{theorem}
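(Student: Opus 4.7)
The plan is to show that $PA_P := \{a/s : a \in P,\, s \in A - P\}$ is the unique maximal ideal of $A_P$. I will accomplish this by exhibiting $PA_P$ explicitly as a proper ideal and then verifying that every element of $A_P$ not in $PA_P$ is a unit; this immediately forces every proper ideal of $A_P$ to be contained in $PA_P$.

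First I would verify that $PA_P$ is a proper ideal of $A_P$. Closure under addition and under multiplication by arbitrary elements of $A_P$ is a routine check using the definition of the operations in a localization. To see that $PA_P$ is proper, I would observe that if $1 \in PA_P$, then $1/1 \sim a/s$ for some $a \in P$ and $s \in A - P$, so by definition of $\sim$ there exists $t \in A - P$ with $t(s - a) = 0$. Then $ts = ta \in P$, and since $P$ is prime and $t, s \notin P$, this is a contradiction. Hence $1 \notin PA_P$, so $PA_P$ is proper.

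Next I would show that every element of $A_P \setminus PA_P$ is a unit. Suppose $a/s \in A_P$ with $a/s \notin PA_P$. The key point is that this forces $a \notin P$: for if $a \in P$, then by the very definition $a/s \in PA_P$. Since $a \in A - P = S$, the fraction $s/a$ is a legitimate element of $A_P$, and $(a/s)(s/a) = 1/1$, so $a/s$ is a unit.

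Finally I would conclude that $PA_P$ is the unique maximal ideal. Any proper ideal $I \subsetneq A_P$ consists entirely of non-units, since a proper ideal cannot contain a unit. By the previous step, every non-unit lies in $PA_P$, so $I \subseteq PA_P$. In particular, $PA_P$ is itself a proper ideal (hence contained in some maximal ideal, which must then equal $PA_P$ by the same inclusion), and every other proper ideal is contained in it. Therefore $PA_P$ is the unique maximal ideal, and $A_P$ is local. There is no real obstacle here; the only subtlety is being careful with the equivalence relation $\sim$ when checking that $1 \notin PA_P$, which is where the primality of $P$ is used.
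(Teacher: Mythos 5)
Your proof is correct and follows essentially the same route as the paper: exhibit the extension of $P$ as a proper ideal (using primality of $P$ and the equivalence relation on fractions to rule out $1$ being a member) and then show every element outside it is a unit, so every proper ideal is contained in it. The only cosmetic difference is that you describe the maximal ideal directly as the set of fractions with numerator in $P$, whereas the paper takes the ideal generated by the image of $P$ and therefore handles a finite sum of such fractions before reaching the same contradiction.
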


\begin{proof}
	Let $m_P$ denote the ideal generated by the image of $P$ under the inclusion map $A \rightarrow A_P$. We first show $1_{A_P}=\frac{1}{1} \notin m_P$. Assume for contradiction that $1_{A_P} \in m_P$. Then there exist $\frac{a_1}{s_1},\dots,\frac{a_q}{s_q} \in A_P$ and $p_1,\dots,p_q \in P$ such that
	\[ \frac{1}{1}=\frac{a_1}{s_1}\cdot\frac{p_1}{1}+\dots+\frac{a_q}{s_q}\cdot\frac{p_q}{1}=\frac{\sum_{i=1}^q \big( a_ip_i \prod_{j\neq i} s_j \big)}{s_1\cdots s_q}
	\] 
	Let $p=\sum_{i=1}^q \big( a_ip_i \prod_{j\neq i} s_j \big) \in P$. Since $\frac{1}{1}=\frac{p}{s_1\cdots s_1}$, there exists some $s \in S$ such that $s(p-s_1\cdots s_q)=0$. Thus, $ss_1s_2\cdots s_q=sp \in P$. However, since $S$ is a multiplicative set, $ss_1s_2\cdots s_q \in S = A-P$, a contradiction.
	
	We have now shown that $m_P$ is a proper ideal of $A_P$. We complete the proof by showing that any other ideal of $A_P$ is either contained in $m_P$ or is all of $A_P$. Let $I$ be an ideal of $A_P$ and $\frac{a}{s} \in I$ such that $\frac{a}{s} \notin m_P$. Since $\frac{a}{s} \notin m_P$, we must have $a \in A-P$. In this case, however, we see that $\frac{s}{a} \in A_P$. Since $\frac{a}{s} \cdot \frac{s}{a} = \frac{as}{sa} = \frac{1}{1}$, we find $I=A_P$.
\end{proof}

We now turn our attention back to Diagram \ref{diagram}. To show such a structure always exists, we will need to explore the relation between the local ring $A_\mathfrak{a}$ and the ring $B_\mathfrak{a}:= (A-\mathfrak{a})^{-1}B$. For a more thorough treatment of the following results, see \cite{Lang}.

\begin{lemma}
	Let $A \subset B$ be rings with $B$ integral over $A$ with no zero divisors and let $\mathfrak{a}$ be an ideal of $A$. Then $B_\mathfrak{a}$ is integral over $A_\mathfrak{a}$.
\end{lemma}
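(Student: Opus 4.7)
The plan is to take an arbitrary element of $B_\mathfrak{a}$ and exhibit, directly and constructively, a monic polynomial over $A_\mathfrak{a}$ that it satisfies. The integrality of $B$ over $A$ gives such a monic relation for elements of $B$, and localizing a single integral relation by dividing through by an appropriate power of the denominator will produce the required relation over $A_\mathfrak{a}$.

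More precisely, I would fix an arbitrary element $\beta \in B_\mathfrak{a}$ and write it as $\beta = b/s$ with $b \in B$ and $s \in A - \mathfrak{a}$ (so that $s$ is a unit in $A_\mathfrak{a}$). Since $B$ is integral over $A$, there exist $a_0, a_1, \dots, a_{n-1} \in A$ and $n \geq 1$ such that
\[
b^n + a_{n-1} b^{n-1} + \cdots + a_1 b + a_0 = 0
\]
inside $B$. Now I would divide this equation by $s^n$ inside $B_\mathfrak{a}$, which is legal because $s$ is a unit there, obtaining
\[
\Bigl(\frac{b}{s}\Bigr)^{\!n} + \frac{a_{n-1}}{s} \Bigl(\frac{b}{s}\Bigr)^{\!n-1} + \cdots + \frac{a_1}{s^{n-1}} \Bigl(\frac{b}{s}\Bigr) + \frac{a_0}{s^n} \;=\; 0.
\]
Each coefficient $a_i/s^{n-i}$ lies in $A_\mathfrak{a}$ because $a_i \in A$ and $s^{n-i}$ is a product of elements of $A - \mathfrak{a}$, hence itself lies in $A - \mathfrak{a}$ (here we use that $A - \mathfrak{a}$ is multiplicatively closed, i.e.\ that $\mathfrak{a}$ is prime, which is the setting the paper is working in). This exhibits $\beta$ as a root of a monic polynomial in $A_\mathfrak{a}[x]$, and since $\beta$ was arbitrary, $B_\mathfrak{a}$ is integral over $A_\mathfrak{a}$.

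There is no real obstacle here; the only point worth checking carefully is that the construction remains inside $A_\mathfrak{a}$, which amounts to the observation that $A - \mathfrak{a}$ is multiplicatively closed. The hypotheses about $B$ having no zero divisors and about $\mathfrak{a}$ being an ideal of $A$ play no active role in this particular argument, beyond ensuring that the localizations $A_\mathfrak{a}$ and $B_\mathfrak{a}$ are defined in the usual sense.
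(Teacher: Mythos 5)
Your proposal is correct and is essentially identical to the paper's own proof: both write an arbitrary element of $B_\mathfrak{a}$ as $b/s$ with $s \in A - \mathfrak{a}$, take an integral relation for $b$ over $A$, and divide through by $s^n$ to obtain a monic relation with coefficients in $A_\mathfrak{a}$. Your side remark that the argument really needs $\mathfrak{a}$ to be prime (so that $A-\mathfrak{a}$ is multiplicatively closed and the localization is defined) is a fair observation about the lemma's stated hypotheses, but it does not change the argument.
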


\begin{proof}
	Let $\frac{b}{a} \in B_\mathfrak{a}$ where $b \in B$ and $a \in A-\mathfrak{a}$. Since $B$ is integral over $A$, there exists a relation $b^\ell+a_{\ell-1}b^{\ell-1}+\dots+a_0 =0$ for some $a_i \in A$ and some positive integer $\ell$. Multiplying both sides of the equation by $\frac{1}{a^\ell}$ yields 
	\[
	  \bigg( \frac{b}{a} \bigg)^\ell + \frac{a_{\ell-1}}{a}\bigg(\frac{b}{a}\bigg)^{\ell-1} + \dots + \frac{a_0}{a^\ell} = 0.
	\]
	Thus, $\frac{b}{a}$ is integral over $A_\mathfrak{a}$. Since any element of $B_\mathfrak{a}$ may be written in this form, $B_\mathfrak{a}$ is integral over $A_\mathfrak{a}$.
\end{proof}

\begin{theorem}
	Let $A$ be a ring, $\mathfrak{a}$ a prime ideal, and $B \supset A$ a ring integral over $A$. Then $\mathfrak{a}B \neq B$, and there exists a prime ideal $\mathfrak{b}$ of B lying above $\mathfrak{a}$.
\end{theorem}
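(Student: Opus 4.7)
The plan is to reduce to a local situation by localizing at $S := A \setminus \mathfrak{a}$, so that we may work in the local ring $A_\mathfrak{a}$ with maximal ideal $\mathfrak{m} := \mathfrak{a}A_\mathfrak{a}$ and in $B_\mathfrak{a} := S^{-1}B$, which is integral over $A_\mathfrak{a}$ by the preceding lemma. Both halves of the theorem will follow from the single claim that $\mathfrak{a}B \cap S = \emptyset$: since $1 \in S$, this immediately gives $\mathfrak{a}B \neq B$, and any $s \in \mathfrak{a}B \cap S$ would supply a unit of $B_\mathfrak{a}$ lying in $\mathfrak{m}B_\mathfrak{a}$, so the claim also forces $\mathfrak{m}B_\mathfrak{a}$ to be a proper ideal of $B_\mathfrak{a}$, a fact I will need in order to produce $\mathfrak{b}$.

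To prove the claim I would argue by contradiction. A supposed witness $s = a_1 b_1 + \cdots + a_n b_n$ with $s \in S$, $a_i \in \mathfrak{a}$, and $b_i \in B$ involves only finitely many $b_i$, so I would pass to the subring $B' := A[b_1, \dots, b_n] \subseteq B$, which by integrality of each $b_i$ is a finitely generated $A$-module. Its localization $B'_\mathfrak{a} := S^{-1}B'$ is then a finitely generated $A_\mathfrak{a}$-module. The element $s/1$ is simultaneously a unit of $B'_\mathfrak{a}$ (because $s \in S$) and a member of $\mathfrak{m}B'_\mathfrak{a}$ (by the displayed relation), so $\mathfrak{m}B'_\mathfrak{a} = B'_\mathfrak{a}$. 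Nakayama's Lemma (Theorem \ref{thm: Nakayama}), applied with $I = \mathfrak{m}$ — the unique maximal ideal of the local ring $A_\mathfrak{a}$, hence contained in every maximal ideal — forces $B'_\mathfrak{a} = 0$. In particular $1/1 = 0$, so some $t \in S$ annihilates $1 \in B'$; since $A$ embeds in $B'$, this gives $t = 0$ in $A$, contradicting $0 \in \mathfrak{a}$.

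With $\mathfrak{m}B_\mathfrak{a}$ now known to be proper, I would apply Zorn's Lemma to choose a maximal ideal $\mathfrak{n}$ of $B_\mathfrak{a}$ containing $\mathfrak{m}B_\mathfrak{a}$. Its contraction $\mathfrak{n} \cap A_\mathfrak{a}$ is a prime ideal of $A_\mathfrak{a}$ containing the maximal ideal $\mathfrak{m}$, and so equals $\mathfrak{m}$. Letting $\varphi : B \to B_\mathfrak{a}$ be the canonical localization map, I set $\mathfrak{b} := \varphi^{-1}(\mathfrak{n})$, which is a prime ideal of $B$. Factoring $A \to B \to B_\mathfrak{a}$ through $A \to A_\mathfrak{a} \to B_\mathfrak{a}$ shows that $\mathfrak{b} \cap A$ is the preimage in $A$ of $\mathfrak{n} \cap A_\mathfrak{a} = \mathfrak{m}$, which is exactly $\mathfrak{a}$.

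The main obstacle is that Nakayama requires a finitely generated module, whereas $B$ itself need not be finitely generated over $A$ — being integral is weaker than being a finite extension. The fix, and essentially the only nontrivial bookkeeping in the proof, is to shrink $B$ to the subring $B'$ generated by only those finitely many $b_i$ appearing in a hypothetical witness, where integrality genuinely buys finite generation as an $A$-module. A secondary subtlety is that the localization map $A \to A_\mathfrak{a}$ need not be injective in general, so every contradiction must be routed through the vanishing of an element of $S$ itself, which is impossible since $0 \in \mathfrak{a}$.
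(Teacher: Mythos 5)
Your proposal is correct and takes essentially the same route as the paper's proof: Nakayama's Lemma applied to the finitely generated subring $A[b_1,\dots,b_n]$ to rule out $\mathfrak{a}B=B$, followed by localization and contraction of a maximal ideal of $B_\mathfrak{a}$ containing $\mathfrak{m}B_\mathfrak{a}$ to produce $\mathfrak{b}$. The one substantive difference is in your favor: by localizing \emph{before} invoking Nakayama you guarantee that the ideal in question ($\mathfrak{m}=\mathfrak{a}A_\mathfrak{a}$ in the local ring $A_\mathfrak{a}$) is contained in every maximal ideal, as Theorem \ref{thm: Nakayama} requires, whereas the paper's first paragraph applies that theorem directly to the arbitrary prime $\mathfrak{a}$ in $A$, where the hypothesis need not hold; your extra care with the possible non-injectivity of $B\to B_\mathfrak{a}$ is likewise a point the paper glosses over.
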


\begin{proof}
	We first prove $\mathfrak{a}B \neq B$ by contradiction. Suppose that $\mathfrak{a}B =B$. Then there exists a finite linear combination $1=a_1b_1+\dots+a_\ell b_\ell$ for some $a_i \in \mathfrak{a}$ and $b_i \in B$. We let $B_0:= A[b_1,\dots,b_\ell]$. Then, by construction, we must have $\mathfrak{a}B_0=B_0$. Moreover, since the $b_i$ are integral over $A$, $B_0$ is a finitely generated $A$-module. Thus, by Theorem \ref{thm: Nakayama}, $B_0=0$, a contradiction.
	
	We now turn our attention to the the existence of a prime ideal in $B$ lying above $\mathfrak{a}$. For this, we will make use of the following commutative diagram:
	\begin{equation}
		\begin{tikzcd}
		B \arrow{r} & B_\mathfrak{b}  \\
		A \arrow{r} \arrow{u} & A_\mathfrak{a} \arrow{u}
		\end{tikzcd}
		\label{diagram thm 2.13}
	\end{equation}
	\noindent Let $m_\mathfrak{a}$ denote the maximal ideal of $A_\mathfrak{a}$. Since $m_\mathfrak{a}B_\mathfrak{a}=\mathfrak{a}A_\mathfrak{a}B_\mathfrak{a}=\mathfrak{a}A_\mathfrak{a}B=\mathfrak{a}B_\mathfrak{a}$ and $B_\mathfrak{a}$ is integral over $A_\mathfrak{a}$, by the above argument we have $m_\mathfrak{a}B_\mathfrak{a} \neq B_\mathfrak{a}$. Hence, there exists a maximal ideal of $B_\mathfrak{a}$ containing $m_\mathfrak{a}B_\mathfrak{a}$, say $m_B$. Moreover, we immediately see $m_\mathfrak{a} \subset m_B \cap A_\mathfrak{a}$. Since $m_\mathfrak{a}$ is maximal, this inclusion must be an equality.
	
	Let $\mathfrak{b}=m_B \cap B$. Since $m_B$ is maximal in $B_\mathfrak{a}$, $\mathfrak{b}$ must be prime in $B$. Moreover, $m_\mathfrak{a} \cap A = \mathfrak{a}$. Thus, by following (\ref{diagram thm 2.13}), we find that we must have $\mathfrak{b} \cap A = \mathfrak{a}$.
\end{proof}

 The existence of primes lying above an arbitrary prime in the base ring is useful, as it allows us to examine elements of the Galois group that respect the local structure of these primes. For our extension $K/F$, where $\mathbb{Q} \subset F$, we let $O_K$ and $O_F$ denote the rings of all elements integral over $\mathbb{Q}$ in $K$ and $F$, respectively. Let $P \subset O_K$ be a prime ideal lying above the prime $p \subset O_F$. We define the \emph{decomposition group} of $\g(K/F)$ at $P$ as $D_P:=\{\sigma \in \g(K/F) \mid \sigma(P) = P\}$.
 
 Since the decomposition group is a subgroup of the Galois group, we can gain useful information studying the different decomposition groups. This method turns out to be computationally advantageous, as we can study field extensions of $\mathbb{Q}$ while limiting the size of coefficients that appear in our equations by working over the finite field of $p$ elements, $\mathbb{F}_p$. For this approach, we only need to choose a prime $p$ that does not divide the \emph{discriminant} of $f$, $\Delta_f := \prod_{s_i<s_j} (s_i-s_j)^2$, where the $s_i$ are the roots of $f$.

\begin{theorem}[Dedekind's Theorem \cite{DedekindsTheorem}] \label{thm: Dedekind}
	Consider the monic polynomial $f \in \mathbb{Z}[x]$, irreducible over $\mathbb{Q}$, with splitting field $K=\mathbb{Q}(\alpha_1,\dots,\alpha_n)$ where $f=(x-\alpha_1)\cdots(x-\alpha_n)$. For a prime $p$ that does not divide $\Delta_f$, let $\overline{f}$ be the reduction of $f$ modulo $p$ and $P$ be a prime ideal of $\mathbb{Z}[\alpha_1,\dots,\alpha_n]$ over $\langle p \rangle$. Then there exists a unique element $\sigma_P \in G=\g(K/\mathbb{Q})$, called the \emph{Frobenius element}, such that $\sigma_P(z)\equiv z^p \mod P$ for every $z \in \mathbb{Z}[\alpha_1,\dots,\alpha_n]$. Moreover, if $\overline{f}=g_1\cdots g_s$ with $g_i$ irreducible over $\mathbb{F}_p$, then $\sigma_P$, when viewed as a permutation of the roots of $f$, has a cycle decomposition $\sigma_1\cdots\sigma_s$ with $\sigma_i$ of length $\deg(g_i)$.
\end{theorem}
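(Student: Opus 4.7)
The plan is to construct $\sigma_P$ as the unique lift of the Frobenius automorphism $\phi \colon x \mapsto x^p$ of the residue field through a natural reduction homomorphism out of the decomposition group. Set $B := \mathbb{Z}[\alpha_1,\dots,\alpha_n]$ and $k := B/P$, which is a finite extension of $\mathbb{F}_p$ since $B$ is integral over $\mathbb{Z}$ with $P \cap \mathbb{Z} = \langle p \rangle$. Define
\[
\rho \colon D_P \longrightarrow \g(k/\mathbb{F}_p), \qquad \rho(\sigma)(\bar z) := \overline{\sigma(z)},
\]
which is well-defined because each $\sigma \in D_P$ stabilizes $P$. I would then show $\rho$ is an isomorphism and take $\sigma_P := \rho^{-1}(\phi)$.

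The hypothesis $p \nmid \Delta_f$ is the key ingredient. Since $\Delta_f = \prod_{i<j}(\alpha_i-\alpha_j)^2 \in \mathbb{Z}$ is a unit mod $p$, the reductions $\bar\alpha_1,\dots,\bar\alpha_n$ are pairwise distinct roots of $\bar f$ in $k$. Injectivity of $\rho$ (which gives uniqueness of $\sigma_P$) follows immediately: an element $\sigma \in \ker\rho$ satisfies $\sigma(\alpha_i) \equiv \alpha_i \pmod{P}$, but $\sigma$ permutes the $\alpha_j$ and the reductions are pairwise distinct, so $\sigma(\alpha_i) = \alpha_i$ for each $i$ and hence $\sigma = \mathrm{id}$. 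The uniqueness statement of the theorem is then immediate, since any two candidates $\sigma_P, \tau_P$ would have $\sigma_P\tau_P^{-1} \in \ker\rho$.

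For surjectivity I would exploit that $p \nmid \Delta_f$ forces $\bar f$ to be separable, so $\langle p \rangle B$ factors as $P_1 \cap \cdots \cap P_g$ for distinct primes (unramifiedness). The Chinese Remainder Theorem (Theorem~\ref{thm: Chinese Remainder}) then yields $B/\langle p \rangle B \cong \prod_i B/P_i$, and the standard transitivity of $G$ on primes lying above $\langle p \rangle$—proved by a norm-plus-prime-avoidance argument—shows all residue fields $B/P_i$ are isomorphic to $k$ as extensions of $\mathbb{F}_p$. A rank comparison of the two sides of the CRT decomposition against $|G|$, combined with the orbit-stabilizer relation $|D_P| = |G|/g$, forces $|D_P| = [k:\mathbb{F}_p] = |\g(k/\mathbb{F}_p)|$, so $\rho$ is bijective and $\sigma_P := \rho^{-1}(\phi)$ exists.

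The cycle decomposition is then a direct consequence. The Frobenius $\phi$ permutes the roots of $\bar f$ in $k$, and on the roots of each irreducible factor $g_i$ it acts as a single cycle of length $\deg(g_i)$, because the $\deg(g_i)$ distinct roots of $g_i$ form a single $\phi$-orbit by Galois theory over finite fields. Since the $\bar\alpha_j$ are distinct and $\overline{\sigma_P(\alpha_j)} = \phi(\bar\alpha_j)$, the permutation of $\{\alpha_1,\dots,\alpha_n\}$ induced by $\sigma_P$ has exactly the same cycle type as the action of $\phi$ on $\{\bar\alpha_1,\dots,\bar\alpha_n\}$, yielding the decomposition $\sigma_1\cdots\sigma_s$ with $|\sigma_i| = \deg(g_i)$. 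The hardest step is the surjectivity of $\rho$: everything else is essentially bookkeeping once the separability of $\bar f$ modulo $p$ is in hand, but surjectivity requires both the transitivity of $G$ on primes above $\langle p \rangle$ and that the orbit-stabilizer count lines up precisely with the residue degree.
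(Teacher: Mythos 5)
Your proposal is correct in outline and shares the paper's skeleton---the reduction homomorphism from $D_P$ to $\g(k/\mathbb{F}_p)$, injectivity via separability of $\bar f$, and the cycle type read off from the Frobenius orbits on the roots of the $g_i$---but you prove the crucial surjectivity step by a genuinely different route. The paper follows Tate: it uses the Chinese Remainder Theorem to manufacture an element $z$ with $z\equiv a \bmod P$ and $z\equiv 0 \bmod \sigma^{-1}(P)$ for every $\sigma\notin D_P$, reduces the polynomial $\prod_{\sigma\in G}(x-\sigma(z))$ modulo $p$, and concludes that the fixed field of the image of $D_P$ is $\mathbb{F}_p$; this works directly inside $B=\mathbb{Z}[\alpha_1,\dots,\alpha_n]$ with no counting whatsoever. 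You instead run the classical decomposition-group count: transitivity of $G$ on the primes above $\langle p\rangle$, orbit--stabilizer giving $\lvert D_P\rvert=\lvert G\rvert/g$, and the CRT isomorphism $B/pB\cong\prod_i B/P_i$ forcing $\lvert G\rvert=g\,[k:\mathbb{F}_p]$. Your version generalizes verbatim to arbitrary Galois extensions of number fields, but it leans on one fact you state too casually: that $pB=P_1\cap\cdots\cap P_g$, i.e.\ that $B/pB$ is reduced. Since $B$ need not be the maximal order, this is not literally ``unramifiedness''; it does hold here because $B/pB$ is generated over $\mathbb{F}_p$ by roots of the separable polynomial $\bar f$, hence is a quotient of a tensor power of the \'etale algebra $\mathbb{F}_p[x]/(\bar f)$ and is therefore a finite product of fields---but that step needs to be said, as does the fact that $B$ is $\mathbb{Z}$-free of rank $\lvert G\rvert$ (it is integral over $\mathbb{Z}$ with $B\otimes\mathbb{Q}=K$), which your rank comparison uses. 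Tate's fixed-field argument sidesteps all of this bookkeeping, which is presumably why the paper reproduces it; your injectivity argument via the pairwise-distinct $\bar\alpha_i$ is cleaner than the paper's, which merely asserts injectivity.
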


The following proof is originally due to John Tate. It can be found in Section 4.16 of \cite{DedekindsTheorem}, and is reproduced here for the sake of completeness.

\begin{proof}
	We begin by noting that the roots of $\overline{f}$ are simple, since otherwise there would be two roots, $\alpha_i$ and $\alpha_j$ such that $\alpha_i \equiv \alpha_j \mod p$ which would imply that $p$ divides $\Delta_f$. Thus, the field $K_p=\mathbb{F}_p[\overline{\alpha_1},\dots,\overline{\alpha_n}]$ is a splitting field for $\overline{f}$ where $\overline{z}$ is the residue class of $z$ modulo $P$. The group $G_p=\g(K_p/\mathbb{F}_p)$ is cyclic and generated by the automorphism $\overline{z} \mapsto \overline{z}^p$. Let $D_P$ be the decomposition group of $G$ at $P$. Every automorphism $\sigma \in D_P$ induces an automorphism $\overline{\sigma} \in G_p$ where $\overline{\sigma}(\overline{z}) = \overline{\sigma(z)}$. The homomorphism $\phi : D_P \rightarrow G_p$ given by $\sigma \mapsto \overline{\sigma}$ is injective.
	
	We now show that it is surjective by showing that the fixed field of $\phi (D_P)$ is $\mathbb{F}_p$. Let $a \in \mathbb{Z}[\alpha_1,\dots,\alpha_n]$. By Theorem \ref{thm: Chinese Remainder}, there is an element $z \in \mathbb{Z}[\alpha_1,\dots,\alpha_n]$ such that $z \equiv a \mod P$ and $z \equiv 0 \mod \sigma^{-1}(P)$ for all $\sigma \in G, \sigma \notin D_P$. Therefore, if we define $g:=\prod_{\sigma \in G} (x-\sigma(z)) \in \mathbb{Z}[x]$, we find the reduction of $g$ modulo $p$ is $\overline{g}=x^\ell \prod_{\sigma\in D_P} (x-\overline{\sigma}(\overline{z}))$. Thus, all conjugates of $\overline{z}$ are of the form $\overline{\sigma}(\overline{z})$. Therefore, the fixed field of $\phi(D_P)$ is $\mathbb{F}_p$.

	Let $\sigma_P \in D_P$ be the unique element such that $\overline{\sigma}_P(\overline{z}) \equiv \overline{z}^p$. Then $\sigma_P$ is the unique element of $G$ such that $\sigma_P(z) \equiv x^p$ for every $z \in \mathbb{Z}[\alpha_1,\dots,\alpha_n]$. Since the homomorphism $z \mapsto \overline{z}$ maps the roots of $f$ bijectively onto the roots of $\overline{f}$, we see that $D_P$ and $G_p$ are isomorphic. The cycle decomposition of $\overline{\sigma}$ is determined by the orbits of the action of $G_p$ on the roots of $\overline{f}$ and this group acts transitively on the roots of each polynomial $g_i$, therefore the cycle decomposition of $\sigma_P$ is $\sigma_1 \cdots \sigma_s$ where $\sigma_i$ has length $\deg(g_i)$.
\end{proof}

Our goal is to find the Galois group for families of geometric problems. In practice, we want to specialize the family of problems to specific instances by plugging in values for some parameters, then applying Theorem \ref{thm: Dedekind} to this case. While working over $\mathbb{Q}$, we find that this is a useful method to sample different subgroups of the desired group. This property was first discovered by Hilbert. The interested reader may consult \cite{HIT2} for a modern look at Hilbert's method of proving this and its wider impact on mathematics.

\begin{theorem}[Hilbert's Irreducibility Theorem]\label{thm: HIT}
  If $f(x_1,\dots,x_m,t_1,\dots,t_q)$ is irreducible in  $\mathbb{Z}[x_1,\dots,x_m,t_1,\dots,t_q]$, then there are infinitely many integers $\alpha_1,\dots,\alpha_r$ such that $f(x_1,\dots,x_m,\alpha_1,\dots,\alpha_q)$ is irreducible in $\mathbb{Z}[x_1,\dots,x_m]$.
\end{theorem}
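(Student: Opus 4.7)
The plan is to reduce the general statement to the case of two variables and then prove that case by a counting argument on specializations. A straightforward induction on $q$ suffices to reduce to $q = 1$: viewing an irreducible $f \in \mathbb{Z}[x_1,\dots,x_m,t_1,\dots,t_q]$ as a polynomial in $t_q$ over the polynomial ring $\mathbb{Z}[x_1,\dots,x_m,t_1,\dots,t_{q-1}]$, the one-parameter statement supplies infinitely many integers $\alpha_q$ with $f(x_1,\dots,x_m,t_1,\dots,t_{q-1},\alpha_q)$ again irreducible, to which the inductive hypothesis applies. A further reduction brings $m$ down to $1$ via a Bertini-type argument: restricting to a generic line, that is, specializing $x_2,\dots,x_m$ to $a_i + c_i x_1$ for integers $a_i, c_i$ chosen so that the resulting polynomial in $\mathbb{Z}[x_1,t]$ is still irreducible, reduces the problem to finding infinitely many integers $\alpha$ with $f(x,\alpha)$ irreducible in $\mathbb{Z}[x]$; existence of a good $(a_i,c_i)$ is guaranteed because the locus of substitutions producing a reducible specialization is a proper Zariski-closed subset of the coefficient scheme.

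For this two-variable core, I would factor $f(x,t) = c(t)\prod_{i=1}^n(x - \xi_i(t))$ over the algebraic closure of $\mathbb{Q}(t)$, where by the Newton--Puiseux theorem each root $\xi_i(t)$ admits a convergent Puiseux expansion near $t = \infty$. A nontrivial factorization $f(x,\alpha) = g(x)h(x)$ in $\mathbb{Z}[x]$ corresponds to a proper nonempty subset $S \subsetneq \{1,\dots,n\}$ whose elementary symmetric functions $e_k\bigl(\{\xi_i(\alpha)\}_{i \in S}\bigr)$ are all rational integers. Because $f$ is irreducible over $\mathbb{Q}(t)$, the Galois group of its splitting field acts transitively on $\{\xi_1,\dots,\xi_n\}$, so no such partial symmetric function can already lie in $\mathbb{Q}(t)$; each $e_k$ is then a genuine irrational algebraic function of $t$. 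The proof concludes by showing that for each of the finitely many candidate subsets $S$, only $o(N)$ integers $\alpha \leq N$ can make every $e_k(\alpha)$ integral, so infinitely many $\alpha$ produce no factorization at all.

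The main obstacle is precisely this final counting step: quantifying how rarely an irrational algebraic function of $t$ assumes integer values at integer arguments. One classical route uses the growth rate of the Puiseux expansion to show that if $\phi(t)$ is algebraic and not rational, then the set of $\alpha \in \mathbb{Z} \cap [1,N]$ with $\phi(\alpha) \in \mathbb{Z}$ has cardinality $o(N)$. An alternative route, closer in spirit to the Frobenius-sampling framework developed earlier in this chapter via Theorem \ref{thm: Dedekind}, is the Chebotarev density theorem: specializing the Galois group of $f$ at primes of good reduction shows that for a positive density of integer $\alpha$ the Frobenius element acts with a cycle structure incompatible with any proper subgroup, forcing $f(x,\alpha)$ to remain irreducible. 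Either path is delicate but standard, and combined with the reductions above yields Hilbert's Irreducibility Theorem.
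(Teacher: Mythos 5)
The paper does not prove this theorem: it is quoted as a classical result with a pointer to \cite{HIT2}, so there is no internal proof to compare against. Judged on its own terms, your outline follows the classical architecture (induct to reduce to one parameter, substitute along a generic line to reduce to two variables, expand the roots in Puiseux series at infinity, and count how often an irrational algebraic function takes integer values), and that architecture is sound. But several reductions need more care than you give them. The claim that the bad linear substitutions form a proper Zariski-closed set is the Bertini--Noether theorem, which applies to \emph{absolutely} irreducible polynomials, whereas your $f$ is only assumed irreducible over $\mathbb{Q}$; the non-absolutely-irreducible case requires a separate (standard but nontrivial) argument. Irreducibility in $\mathbb{Z}[x]$ rather than $\mathbb{Q}[x]$ can also fail for content reasons after specialization (e.g.\ $2x+t$ at even $t$), so primitivity of the specialized polynomial must be arranged. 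And irreducibility over $\mathbb{Q}(t)$ only guarantees that \emph{some} $e_k(S)$ lies outside $\mathbb{Q}(t)$, not every one --- consider the subset $\{\xi,-\xi\}$ of the roots of an irreducible even polynomial, where $e_1=0$ --- though one irrational function per subset is all the count needs.

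The more serious issue is that the entire content of Hilbert's theorem sits in the step you defer. Everything up to ``only $o(N)$ integers $\alpha\le N$ make every $e_k(\alpha)$ integral'' is bookkeeping; the theorem \emph{is} that counting lemma, and calling it ``delicate but standard'' leaves the proof unwritten. Completing this route means actually proving that an algebraic function $\phi$, regular near $t=\infty$ and not in $\mathbb{Q}(t)$, satisfies $\#\{\alpha\in[1,N]\cap\mathbb{Z} : \phi(\alpha)\in\mathbb{Z}\}=O(N^{1-\delta})$, which splits into cases according to whether the Puiseux expansion has a fractional exponent, an irrational coefficient, or a nonpolynomial tail, each with its own argument. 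Your alternative via Chebotarev is also not correct as stated: Frobenius elements at primes of good reduction, as in Theorem \ref{thm: Dedekind}, control the factorization of $f(x,\alpha)\bmod p$ for a \emph{fixed} $\alpha$ as $p$ varies, not the behavior of $\g\bigl(f(x,\alpha)/\mathbb{Q}\bigr)$ as $\alpha$ varies. The genuine sieve-theoretic proof bounds, for each proper subgroup $H$ of the generic Galois group, the density of $\alpha$ whose specialized group is conjugate into $H$, which is a different and substantially harder statement than the one you sketch.
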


Of course, using these methods is still only sampling from subgroups of the desired group. However, if we are able to determine that certain cycle types are present in the group, then we are able to narrow down the list of possible candidates for our Galois group. In particular, discovering relatively few elements present in the Galois group may be enough to determine if the group is actually the full symmetric group. 

To see this, we will first present a theorem of Jordan where he presents a condition that determines when a subgroup of $S_n$ contains the alternating group, in which case we say the group is \emph{at least alternating}. Since we will be considering groups that can be thought of as permutation groups of $n$ elements, we will use the notation $[n]:=\{1,2,\dots,n\}$. A \emph{partition} of $[n]$ is a collection of disjoint subsets whose union is $[n]$. We call the partitions $\big\{\{1,2,\dots,n\}\big\}$ and $\big\{\{1\},\{2\},\dots,\{n\}\big\}$ \emph{trivial partitions} of $[n]$.

\begin{definition}
	A permutation group $G$ acting on a nonempty set $X$ is called \emph{primitive} if it acts transitively on $X$ and preserves no nontrivial partition of $X$. If $G$ acts transitively on $X$ and does preserve a nontrivial partition, then $G$ is \emph{imprimitive}.
\end{definition}

\begin{theorem}[Jordan \cite{Jordan_Alt}] \label{thm: Jordan Perm}
	If $G$ is a primitive subgroup of $S_n$ and contains a $p$-cycle for some prime number $p<n-2$, then $G$ is at least alternating.
\end{theorem}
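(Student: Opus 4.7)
The plan is to use primitivity of $G$ to produce a $3$-cycle in $G$, and then to invoke the base case $p=3$ of the theorem, which is the classical statement that a primitive subgroup of $S_n$ containing a $3$-cycle is at least alternating. This base case can itself be proved by noting that the subgroup of $G$ generated by all $3$-cycles in $G$ is normal in $G$ (since conjugates of $3$-cycles are $3$-cycles) and is nontrivial, hence transitive on $[n]$ by primitivity, from which one extracts all of $A_n$. The bulk of the work is therefore to produce a $3$-cycle from the given $p$-cycle using primitivity.

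First I would let $S \subset [n]$ be the support of $\sigma$, so $|S| = p$ and $|[n]\setminus S| \geq 3$ by the hypothesis $p < n-2$. Since $G$ is transitive, $\bigcup_{g \in G} gS = [n]$. If the conjugate supports $\{gS : g \in G\}$ were pairwise either equal or disjoint, they would form a nontrivial $G$-invariant partition of $[n]$ into blocks of size $p$ (nontrivial because $1 < p < n$), contradicting primitivity. So there exists $g \in G$ with $1 \leq |gS \cap S| \leq p-1$, whence $\sigma' := g \sigma g^{-1}$ is a $p$-cycle whose support $S'$ properly overlaps $S$. Because $p$ is prime, every nonidentity power of $\sigma$ is again a $p$-cycle with support $S$, and similarly for $\sigma'$; a suitable commutator of powers of $\sigma$ and $\sigma'$ then yields a nonidentity element of $G$ supported strictly inside $S \cup S'$ and fixing at least one point of $S \cap S'$, thus with support of size less than $p$.

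Iterating this shortening construction, and reapplying the same primitivity argument to the support of the current element at each stage, I would produce a nonidentity element $\rho \in G$ of minimal support and argue $\rho$ is a $3$-cycle: the case $p = 2$ gives $G = S_n$ directly from the classical fact that a primitive group containing a transposition is the full symmetric group, so we may assume $p$ is odd, in which case every $p$-cycle is even and hence $\rho \in A_n$, forcing $|\mathrm{supp}(\rho)| \geq 3$; minimality of the support then forces $|\mathrm{supp}(\rho)| = 3$. The main obstacle is precisely this cycle-shortening step: one has to track carefully which elements of $S \cap S'$ are moved or fixed by iterated commutators, and one has to guarantee the reduction cannot stall at an intermediate support size between $4$ and $p$. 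It is here that the strict inequality $p < n-2$ is essential, since it guarantees at each stage that the complement of the current support has at least three elements, leaving room for the primitivity argument to find a properly overlapping conjugate and thereby continue the shortening.
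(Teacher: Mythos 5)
The paper does not actually prove this statement: it is quoted from the literature with a citation to Jordan, so there is no in-paper argument to compare yours against. Judged on its own, your sketch has a genuine gap at its central step, the cycle-shortening. From primitivity you correctly obtain a conjugate $p$-cycle $\sigma' = g\sigma g^{-1}$ with $1 \le \lvert S \cap S' \rvert \le p-1$, and it is true that such $\sigma,\sigma'$ cannot commute, so some commutator of powers is a nonidentity element supported in $S \cup S'$. But the inference ``supported strictly inside $S\cup S'$ and fixing a point of $S\cap S'$, thus with support of size less than $p$'' is arithmetically false: $\lvert S\cup S'\rvert = 2p - \lvert S\cap S'\rvert \ge p+1$, so even after discarding a point the support can still have size at least $p$ (generically the commutator moves on the order of $3\lvert S\cap S'\rvert$ points, which exceeds $p$ once the overlap is large). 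The one clean case is $\lvert S\cap S'\rvert=1$, where the commutator really is a $3$-cycle, but primitivity does not hand you an overlap of exactly one point, and driving the overlap down to $1$ --- or otherwise preventing the contraction from stalling at support sizes between $4$ and $p$ --- is precisely the content of Jordan's theorem. Your sketch names this obstacle but does not overcome it.

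A second, related problem: the hypothesis $p < n-2$ is not consumed where you claim it is. Your block-system argument for finding a properly overlapping conjugate only needs $1 < p < n$, and the commutator steps as described never use the three fixed points. The statement is false for $p = n-2$: for example, $\mathrm{PGL}(2,8)$ acting on the projective line over $\mathbb{F}_8$ is a primitive (indeed sharply $3$-transitive) subgroup of $S_9$ of order $504$ containing a $7$-cycle, and it does not contain $A_9$. Any correct proof must therefore use $p \le n-3$ essentially, and an argument that would apply verbatim to that example cannot be complete. (The base case you invoke, that a primitive group containing a $3$-cycle is at least alternating, is classical and fine to cite, though ``a transitive normal subgroup generated by $3$-cycles is all of $A_n$'' also requires the usual connectivity argument rather than following immediately.)
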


It follows almost immediately from this theorem that we can tell if a subgroup of $S_n$ is the entire symmetric group by finding only three cycle types in the group.

\begin{cor} \label{cor: full symmetric}
	If $G$ is a subgroup of $S_n$ that contains an $n$-cycle, an $(n-1)$-cycle, and a $p$-cycle for some prime number $p<n-2$, then $G=S_n$.
\end{cor}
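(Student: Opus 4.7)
The plan is to invoke Jordan's theorem (Theorem 2.18), so the main task is to verify that $G$ is primitive; once that is done, primitivity plus the $p$-cycle yields $A_n \leq G$, and a parity check upgrades this to $G = S_n$.

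First I would observe that $G$ is transitive on $[n]$ because it contains an $n$-cycle. Then I would suppose for contradiction that $G$ preserves a nontrivial partition $\mathcal{B}$ of $[n]$; by transitivity the blocks have a common size $k$ with $1 < k < n$. The key idea is to exploit the $(n-1)$-cycle $\tau \in G$, which fixes a unique point $a \in [n]$ and acts as a single $(n-1)$-cycle on $[n] \setminus \{a\}$. Let $B \in \mathcal{B}$ be the block containing $a$. Since $\tau$ permutes the blocks and $a \in B \cap \tau(B)$, blocks being disjoint forces $\tau(B) = B$. Hence $B \setminus \{a\}$ is a $\tau$-invariant subset of the single $(n-1)$-element $\tau$-orbit $[n] \setminus \{a\}$, so it must be either empty or all of $[n] \setminus \{a\}$. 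The first case gives $k = 1$ and the second gives $k = n$, contradicting $1 < k < n$. Therefore $G$ is primitive.

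Having established primitivity, Theorem 2.18 applies directly (since $G$ contains a $p$-cycle with $p < n-2$ prime), giving $A_n \leq G$. To finish, I would compute signs: an $n$-cycle has sign $(-1)^{n-1}$ and an $(n-1)$-cycle has sign $(-1)^{n-2}$, so these two cycles have opposite parities and at least one of them is an odd permutation. Thus $G \not\subseteq A_n$, and combined with $A_n \leq G$ this forces $G = S_n$.

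The only real obstacle is the primitivity step; the clever ingredient there is noticing that an $(n-1)$-cycle fixes exactly one point, so the block containing that point must be $\tau$-stable, and the long orbit of $\tau$ then leaves no room for a proper nontrivial block.
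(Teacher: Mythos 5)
Your proof is correct, and it follows the same overall skeleton as the paper's: transitivity from the $n$-cycle, primitivity via the $(n-1)$-cycle, Jordan's theorem to get $A_n \leq G$, and a parity check to conclude $G = S_n$. The one place where you genuinely diverge is the primitivity step. The paper argues by explicitly constructing an element that breaks a putative nontrivial partition: it picks $a,b$ in a common part and $c$ not in the part of the fixed point $n$, uses transitivity to send $a \mapsto n$, and then composes with a power of the $(n-1)$-cycle to send $\tau(b)$ to $c$ while fixing $n$, so the image of the part containing $a$ and $b$ straddles two parts. You instead use the block-system viewpoint: the $(n-1)$-cycle fixes a unique point, hence stabilizes the block containing it, and since the complement of that point is a single orbit of the $(n-1)$-cycle, the block must be a singleton or all of $[n]$. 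Your version is shorter and leans on the standard fact that a partition preserved by a transitive group has blocks of equal size (which you should cite or note follows from transitivity on blocks), whereas the paper's version is self-contained but requires checking that the chosen points $a,b,c$ exist and that $\tau(b) \neq n$. Both are valid; yours is arguably the cleaner argument once the equal-block-size fact is granted.
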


\begin{proof}
	Suppose $G$ satisfies the above hypotheses. We begin by showing that $G$ is primitive. Since $G$ contains an $n$-cycle, $G$ is transitive. Let $X$ be a nontrivial partition of $[n]$. Without loss of generality, assume that the $(n-1)$-cycle in $G$, $\sigma$, cyclically permutes the elements of $[n-1]$. Since $X$ is nontrivial, there exists distinct $a,b,c \in [n]$ such that both $a$ and $b$ are in the same component of $X$, and $c$ and $n$ are in two different components of $X$. Since $G$ is transitive, there exists a permutation $\tau \in G$ such that $\tau(a)=n$. Moreover, $\sigma$ is cyclic on the subset $[n-1]$, thus there exists a positive integer $\ell$ such that $\sigma^\ell\circ\tau(b)=c$. However, since $\sigma^\ell\circ\tau(a)=\sigma^\ell(n)=n$, $G$ must not preserve the partition $X$. We now have that $G$ is a transitive permutation subgroup of $S_n$ that preserves no nontrivial partition of $[n]$, hence $G$ is primitive.
	
	Since $G$ is a primitive subgroup of $S_n$ containing a $p$-cycle for some prime $p < n-2$, $G$ is at least alternating by Theorem \ref{thm: Jordan Perm}. Moreover, $G$ has both an $n$-cycle and an $(n-1)$-cycle. The alternating group only contains permutations of even length and either the $n$-cycle or the $(n-1)$-cycle has odd length, therefore $G=S_n$.
\end{proof}

Since our corollary requires that we find a $p$-cycle for some prime $p<n-2$, it will be useful to be able to tell if a group containing a permutation of some other cycle type will necessarily have a $p$-cycle in it. We note that if we find a permutation whose decomposition into disjoint cycles contains a $p$-cycle for a sufficiently large prime $p$, then we know the group must contain a $p$-cycle as well.

\begin{lemma}\label{lem: cycle elimination}
	Suppose that $\sigma \in S_n$ may be written as a product of disjoint cycles $\sigma=\sigma_1\sigma_2\cdots\sigma_s$ where $\sigma_i$ is an $a_i$-cycle. Furthermore, suppose that $a_s=p$, a prime, and $ \max_{i < s} a_i < p$. Then $\sigma^{(p-1)!}$ is a $p$-cycle.
\end{lemma}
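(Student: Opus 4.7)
The plan is to exploit the standard fact that disjoint cycles commute, so that raising $\sigma$ to any power distributes across the cycle decomposition: $\sigma^{(p-1)!} = \sigma_1^{(p-1)!}\sigma_2^{(p-1)!}\cdots\sigma_s^{(p-1)!}$. Thus the statement reduces to two independent claims, one about the ``small'' cycles $\sigma_1,\dots,\sigma_{s-1}$ and one about the $p$-cycle $\sigma_s$.

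For the small cycles, I would argue that each $\sigma_i$ with $i<s$ is annihilated by $(p-1)!$. Since $\sigma_i$ is an $a_i$-cycle, its order is $a_i$, and the hypothesis $a_i < p$ together with $p$ being an integer gives $a_i \leq p-1$. Hence $a_i$ appears as one of the factors in the product $(p-1)! = 1\cdot 2\cdots(p-1)$, so $a_i \mid (p-1)!$ and $\sigma_i^{(p-1)!}$ is the identity.

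For the large cycle, I would observe that $\sigma_s$ has order $p$, and since $p$ is prime and does not divide any of $1,2,\dots,p-1$, we have $\gcd\bigl((p-1)!,p\bigr)=1$. A standard fact about cyclic groups then gives that $\sigma_s^{(p-1)!}$ still has order $p$. But a power of a single $p$-cycle that itself has order $p$ must again be a $p$-cycle on the same underlying $p$-element support, which finishes the claim.

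Combining these, $\sigma^{(p-1)!}$ equals $\sigma_s^{(p-1)!}$, a $p$-cycle. Honestly, I do not expect any real obstacle here; the only thing to be careful about is the initial observation that disjoint cycles commute so that the power distributes, and the harmless but essential use of primality of $p$ in establishing $\gcd\bigl((p-1)!,p\bigr)=1$. Everything else is bookkeeping.
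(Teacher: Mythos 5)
Your proposal is correct and follows essentially the same route as the paper: distribute the power $(p-1)!$ across the commuting disjoint cycles, kill each $\sigma_i$ with $i<s$ because $a_i\leq p-1$ divides $(p-1)!$, and use $\gcd\bigl((p-1)!,p\bigr)=1$ to conclude $\sigma_s^{(p-1)!}$ is still a $p$-cycle. The only difference is that you spell out the two supporting facts (why $a_i\mid(p-1)!$ and why a coprime power of a $p$-cycle is a $p$-cycle) in slightly more detail than the paper does.
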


\begin{proof}
	Let $e \in S_n$ denote the identity element. Since disjoint cycles commute, we have
	\begin{align*}
	 \sigma^{(p-1)!} 
	 = \bigg( \prod_{i=1}^{s-1} \sigma_i^{(p-1)!} \bigg) \sigma_s^{(p-1)!}
	 &= \bigg( \prod_{i=1}^{s-1} (\sigma_i^{a_i})^\frac{(p-1)!}{a_i} \bigg) \sigma_s^{(p-1)!} \\
	 &= \bigg( \prod_{i=1}^{s-1} e^\frac{(p-1)!}{a_i} \bigg) \sigma_s^{(p-1)!}
	 = \sigma_s^{(p-1)!}.
	\end{align*}
	Moreover, since $(p-1)!$ and $p$ are relatively prime, $\sigma_s^{(p-1)!}$ is a $p$-cycle.
\end{proof}

\section{Schubert Calculus}\label{sp}

The Schubert calculus is concerned with the cardinality and structure of sets of linear subspaces of a vector space which have specific positions with respect to other fixed linear spaces. In this setting, we can consider the $k$-planes satisfying our conditions as points in the space of $k$-planes in our vector space.

\begin{definition}
	Let $V$ be an $m$-dimensional $\mathbb{C}$-vector space. The \emph{Grassmannian} $\gr(k,V)$ is the set of all $k$-planes in $V$. Alternatively, by choosing a basis, we may write $\gr(k,m):= \gr(k,\mathbb{C}^m)$.
\end{definition}

\begin{definition}
	A \emph{flag}, $F_\bullet$, of a vector space $V$ is a nested sequence of linear subspaces $F_1 \subset F_2 \subset \dots \subset F_m = V$ such that $\dim(F_i)=i$. The space of all flags of $V$ is written as $\fl(V)$. Alternatively, by choosing a basis, we may write $\fl(m):=\fl(\mathbb{C}^m)$.
\end{definition}

Following Chapter 10 of \cite{Fulton_Intersection}, we will express elements of $\gr(k,V)$ in local coordinates via matrices. After choosing a basis of $V \cong \mathbb{C}^m$, we let $M$ be a full rank $k \times m$ matrix and consider the map $M \rightarrow R(M)$ where $R(M)$ is the row span of $M$. In this manner, we see the set of full rank $k \times m$ matrices are mapped to points of $\gr(k,V)$ and two matrices are mapped to the same point if and only if they are equivalent up to action by $\gl(k,\mathbb{C})$. Thus, for every $H \in \gr(k,m)$, there is a unique full rank $k \times m$ matrix in echelon form, $M_H$, such that $M_H \rightarrow H$. Under this map, we see that equations on the space of full rank $k \times m$ matrices give equations on $\gr(k,m)$. In particular, the $k \times m$ echelon matrices whose first $k$ columns form a nonsingular matrix bijectively map to an open subset of $\gr(k,m)$. Thus, $\dim \gr(k,m) = k(m-k)$. 

In a similar manner, we may also represent flags of $V \cong \mathbb{C}^m$ with matrices. Let $M$ be a nonsingular $m \times m$ matrix and $R_\ell(M)$ be the row span of the first $\ell$ rows of $M$. Then
\[
 R_1(M) \subset R_2(M) \subset \dots \subset R_m(M)= \mathbb{C}^m
\]
with $\dim R_i(M) = i$. Thus, we may associate to $F_\bullet$ a full rank matrix $M_F$ such that $R_i(M_F)=F_i$. 

Our primary interest will be in elements of the Grassmannian that satisfy special incidence conditions with respect to a given flag.

\begin{example} \label{ex: Schubert cell}
	We will consider an element of $H \in \gr(4,9)$ that meets the flag $F_\bullet$ in a special way. By choosing the appropriate basis of our vector space, we may assume that $M_F$ is the matrix with $1$s on the anti-diagonal and $0$s elsewhere. Suppose $M_H$ is
	\[
	\begin{pmatrix}
	0 & \textcolor{blue}{0} & 0 & \textcolor{blue}{0} & 0 & 1 & * & * & * \\
	0 & \textcolor{blue}{0} & 0 & \textcolor{blue}{0} & 1 & 0 & * & * & * \\
	0 & \textcolor{blue}{0} & 1 & * & 0 & 0 & * & * & * \\
	1 & * & 0 & * & 0 & 0 & * & * & *
	\end{pmatrix}
	\]
	where $*$ denotes an arbitrary number. The space $F_i$ contains the span of any rows that may be written with only the last $i$ entries nonzero. We see that the last four columns of the matrix contain all of the non-zero entries of its first row. Furthermore, no other row may be expressed as a sum of vectors with only the last four entries nonzero. Thus, $\dim(H \cap F_4)=1$. Continuing in this fashion, we find that $H$ satisfies the following incidence conditions:
	\begin{align}\label{conditons of example 2.22}
	 \dim(H \cap F_4)=1, \dim(H \cap F_5)=2, \dim(H \cap F_7)=3, \dim(H \cap F_9)=4.
	\end{align}
\end{example}

When writing incidence conditions similar to (\ref{conditons of example 2.22}), some conditions we write are satisfied by all subspaces. For example, since $H \in \gr(4,9)$, we must have $\dim(H \cap F_9)=4$ for any flag $F_\bullet$. Similarly, for any $H \in \gr(k,m)$ and any flag $F_\bullet$, we must have $\dim(H \cap F_{m-k+i}) \geq i$. If we let $\lambda_i$ be the largest integer such that $\dim(H \cap F_{m-k+i-\lambda_i})=i$, then $\lambda=(\lambda_1,\dots,\lambda_k)$ completely encodes the position of $H$ with respect to $F_\bullet$. 

Note that $\lambda$ is a \emph{partition}; that is, $\lambda$ is a non-increasing sequence of integers. If, for example, $m=4, k=2, \text{ and } \lambda_1=1$, then we would have the equations $\dim(H \cap F_2)=1$ and $\dim(H \cap F_{4-\lambda_2})=2$. It is clear that both of these conditions can only hold if $\lambda_2 \leq 1 = \lambda_1$. Thus, $\lambda$ satisfies
\begin{equation}
 m-k \geq \lambda_1 \geq \lambda_2 \geq \dots \geq \lambda_k \geq 0. \label{partition restrictions}
\end{equation}
The lower bound $\lambda_k \geq 0$ is required since $\dim(H \cap F_{m-k+k-0}) = k$ for any $F_\bullet \in \fl(m)$ and $H \in \gr(k,m)$. The upper bound $\lambda_1 \leq m-k$ is required since $\dim(H \cap F_{m-k+1-\lambda_1}) = 1$ implies $\dim(F_{m-k+1-\lambda_1}) \geq 1$, hence $m-k+1-\lambda_1 \geq 1$. We may also denote a partition as a \emph{Young diagram}, which is a finite collection of boxes arranged in left justified, non-increasing rows. The partition $\lambda$ is the Young diagram whose $i^{\text{th}}$ row has $\lambda_i$ boxes. For example, the Young diagram for $\lambda=(3,2,0,0)$ is \ThT. %

\begin{definition}\label{def: schubcell}
	For a partition, $\lambda$, satisfying (\ref{partition restrictions}) and a flag $F_\bullet \in \fl(V)$, the \emph{Schubert cell} $\Omega_\lambda^\circ F_\bullet$ in $\gr(k,V)$ is the collection of $k$-planes satisfying
	\begin{align*}
	\Omega_\lambda^\circ F_\bullet := \{ H \in \gr(k,V) \mid &\dim(H \cap F_{m-k+i-\lambda_i}) = i \text{ and } \\
	&\dim(H \cap F_{m-k+i-\lambda_i-1}) < i \text{ for all } i \in [k] \}.
	\end{align*}
	We call $\lambda$ a \emph{Schubert condition} on $\gr(k,V)$ and $F_\bullet$ the \emph{defining flag} of $\Omega_\lambda^\circ F_\bullet$.
\end{definition}

Schubert cells are generators of the cohomology ring of the Grassmannian, and the product structure of this ring can be used to gain information about the intersection of Schubert varieties. For a thorough treatment of this subject, see \cite{Fulton_Intersection}.

We have already seen one example of a Schubert cell: any element $H \in \Omega_{\TTI}^\circ F_\bullet$ in $\g(4,9)$ may be written in the form seen in Example \ref{ex: Schubert cell}. In fact, we can see the partition $\TTI$ in the matrix. If we compare the matrix in Example \ref{ex: Schubert cell}, $M_H$, to a general full-rank $4 \times 9$ matrix in echelon form, we find that $M_H$ has additional zeros which are blue in the example. The shape of these zeros is the same as that of the corresponding partition: $\TTI$.

Alternatively, given the Young diagram and the Grassmannian, we can read the incidence conditions off from the location of the last box of each row. When considering a diagram for a Schubert cell in $\gr(k,m)$, we make a $k \times (m-k)$ block and label each box, as well as a column to the left of the left-most column, starting with a $1$ in the upper-right corner. The labeling is constant along diagonals and increases by one as we move down and to the left. We then place the Young diagram in the upper-left corner of this box. If the last box of the Young diagram's $i^\text{th}$ row is labeled $j$, then for $H \in \Omega_\lambda^\circ F_\bullet$, $\dim(H \cap F_j) = i$ and $\dim(H \cap F_{j-1})<i$. If a row is empty, then we take $j$ to be the number to the left of the $k \times (m-k)$ block in that row. See Figure \ref{fig: one condition} for the example $\Omega_{\TTI}^\circ F_\bullet$ in $\gr(4,9)$ and compare the labels of the last colored box in each row to the incidence conditions given in Example \ref{ex: Schubert cell}.

\begin{figure}
	\begin{center}
		\begin{tabular}{c|c|c|c|c|c|}
			\cline{2-6}
			6 & \cellcolor{blue!25} 5 & \cellcolor{blue!25} 4 & 3 & 2 & 1 \\
			\cline{2-6}
			7 & \cellcolor{blue!25} 6 & \cellcolor{blue!25} 5 & 4 & 3 & 2 \\
			\cline{2-6}
			8 & \cellcolor{blue!25} 7 & 6 & 5 & 4 & 3 \\
			\cline{2-6}
			9 & 8 & 7 & 6 & 5 & 4 \\
			\cline{2-6}
		\end{tabular}
		\caption{\rule{0pt}{0pt}The Young diagram for $(2,2,1,0)$ overlaid on the labeled block for $\gr(4,9)$.}\label{fig: g49 box} \label{fig: one condition}
	\end{center}
\end{figure}

In this example, some of the conditions on $H$ were not explicitly needed to define $\Omega_{\TTI}^\circ F_\bullet$. The condition $\dim(H\cap F_9) = 4$ is implied by $H\in \gr(4,9)$, and the condition $\dim(H \cap F_4) \geq 1$ is implied by $\dim(H \cap F_5) = 2$. In general, the essential conditions on $H$ are those given by the $\lambda_i$ for which $\lambda_i > \lambda_{i+1}$ or $\lambda_k$ if $\lambda_k>0$. In the corresponding Young diagram, these conditions are those given by the boxes in the lower-right corners. 

\begin{definition}
	For a partition, $\lambda$, satisfying (\ref{partition restrictions}) and a flag $F_\bullet \in \fl(V)$, the \emph{Schubert variety} $\Omega_\lambda F_\bullet$ in $\gr(k,V)$ is the collection of $k$-planes of the form
	\[
	\Omega_\lambda F_\bullet := \{ H \in \gr(k,V) \mid \dim(H \cap F_{m-k+i-\lambda_i})  \geq i \}.
	\]
\end{definition}

The Schubert cell $\Omega^\circ_\lambda F_\bullet$ is a dense subset of the Schubert variety $\Omega^\circ_\lambda F_\bullet$ \cite{Kleiman}. Using local coordinates, we have
\begin{equation}
  \dim (H \cap F_i) \geq j \quad \text{if and only if} \quad \rk
  \begin{pmatrix}
  M_H \\ M_{F_i}
  \end{pmatrix}
  \leq k+i-j. \label{Schubert equation}
\end{equation}
For an intersection of Schubert varieties, we may pick a basis such that, for some flag in the intersection $F_\bullet$, $M_F$ is given by the matrix with $1$s on the anti-diagonal and $0$s elsewhere. In this case, the $M_H$ obtained as in Example \ref{ex: Schubert cell} represents all elements in the Schubert cell $\Omega_\lambda^\circ F_\bullet$. Since this is a dense open subset of the Schubert variety, we lose little by narrowing our focus to the affine patch of $\gr(k,m)$ in the form of $M_H$ and using (\ref{Schubert equation}) for the relations $H$ has with respect to the other flags in the intersection. Thus, the entries of $M_H$ marked as $*$ in Example \ref{ex: Schubert cell} must satisfy equations given by a collection of minors vanishing.

\begin{example}\label{ex: equations}
 Consider the flags $F_\bullet,G_\bullet$ of $\mathbb{C}^4$, and choose a basis of $\mathbb{C}^4$ such that $M_F$ is the matrix  with $1$s on the anti-diagonal and $0$s elsewhere. Suppose that $M_G$ is
 \[
 \begin{pmatrix}
 	1 & 1 & 0 & 0 \\
 	0 & 1 & 1 & 0 \\
 	1 & 0 & 0 & 1 \\
 	0 & 0 & 0 & 2 \\
 \end{pmatrix}
 \]
 for this choice of basis. We wish to find the $H \in \gr(2,4)$ such that $H \in \Omega_{\I}F_\bullet \cap \Omega_{\I}G_\bullet$. We know that $M_H$ is of the form
 \begin{equation} \label{H}
  \begin{pmatrix}
   0 & 0 & 1 & x \\
   1 & 0 & 0 & y
  \end{pmatrix}
 \end{equation}
 for some $x,y \in \mathbb{C}$ for an open dense subset of $H \in \Omega_{\I}F_\bullet$. Furthermore, since $H \in \Omega_{\I}G_\bullet$, we have $\dim(H \cap G_2) \geq 1$. Thus, using (\ref{Schubert equation}) yields
 \[
  \rk \begin{pmatrix}
  0 & 0 & 1 & x \\
  1 & 0 & 0 & y \\
  1 & 1 & 0 & 0 \\
  0 & 1 & 1 & 0
  \end{pmatrix}
  \leq 3.
 \]
 This condition is equivalent to determinant of the above matrix vanishing which happens precisely when $x=y$. Therefore, any $H \in \gr(2,4)$ with $M_H$ of the form (\ref{H}) where $x=y$ is in $\Omega_{\I}F_\bullet \cap \Omega_{\I}G_\bullet$.
\end{example}

Since we focus on zero-dimensional varieties, we want to consider intersections of Schubert varieties. The codimension of a Schubert variety in $\gr(k,V)$ is equivalent to the size of the partition $\lvert \lambda \rvert = \sum_{i=1}^k \lambda_i$. This can be seen by counting the number of zeros in the general form of the matrices in the Schubert cell (as in Example \ref{ex: Schubert cell}) and comparing this to the number of zeros that are necessarily in a row-reduced matrix of size $k \times m$.

\begin{definition}
A \emph{Schubert problem}, $\boldsymbol{\lambda}=(\lambda^1,\dots,\lambda^r)$, on $\gr(k,V)$ is a family of intersections of Schubert varieties
\[
\Bigl\{\Omega_{\boldsymbol{\lambda}} F_\bullet := \bigcap_{i=1}^r \Omega_{\lambda^i} F^i_\bullet \mid F^i_\bullet \text{ flags of } V \Bigr\}
\]
 such that $\sum_{i=1}^r \lvert \lambda^i \rvert = k(m-k)$. An instance of a Schubert problem is
 \[
  \Omega_{\boldsymbol{\lambda}} \mathbf{F} 
  := \bigcap_{i=1}^r \Omega_{\lambda^i} F^i_\bullet
 \]
 for a selection of flags $\mathbf{F}=(F^1_\bullet, \dots, F^r_\bullet)$.
\end{definition}

When listing a Schubert problem as a list of Young diagrams, we will use product notation reminiscent of multiplication in the cohomology ring. For example, in $\gr(2,6)$, we may write the Schubert problem $(\I,\I,\I,\T,\TI)$ as $\I^3 \cdot \T \cdot \TI$.

We now study the structure of a Schubert problem. Let $\fl(V)$ denote the space of flags of $V$. We may write $\fl(m)$ when $V=\mathbb{C}^m$. By Kleiman's Transversality Theorem \cite{Kleiman}, there is a dense open subset $U \subset \prod_{i=1}^r \fl(V)$ such that for any $\mathbf{F} \in U$, the intersection $\Omega_{\boldsymbol{\lambda}}\mathbf{F}$ is transverse. Since the codimension of $\Omega_{\boldsymbol{\lambda}}\mathbf{F}$ is $\dim \gr(k,V)$,  it is a zero-dimensional variety. When discussing the number of solutions to a Schubert problem, $\boldsymbol{\lambda}$, we mean the number of points in $\Omega_{\boldsymbol{\lambda}} \mathbf{F}$ for a general choice of flags. However, not every Schubert problem will be of interest to us. When certain pairs of Young diagrams are involved in a Schubert problem, then the problem could possibly be reduced to a problem on a smaller Grassmannian or be an empty intersection.

\begin{lemma} \label{lem: deficient}
	Let $\boldsymbol{\lambda}=(\lambda^1,\dots,\lambda^r)$ be a Schubert problem in $\gr(k,m)$. Then, for any two partitions, say $\lambda$ and $\mu$, we place $\lambda$ in the upper-left corner of the the block for $\gr(k,m)$ and we rotate $\mu$ by $180^\circ$ and place it in the lower-right corner of the block for $\gr(k,m)$. Then we have the following relations:
	\begin{enumerate}
		\item If $\lambda$ and $\mu$  overlap, then $\boldsymbol{\lambda}$ has no solutions. \label{no soln}
		\item If an entire row of the block for $\gr(k,m)$ is filled by $\lambda$ and $\mu$, then $\boldsymbol{\lambda}$ is equivalent to the Schubert problem in $\gr(k-1,m-1)$ obtained by removing that row of $\lambda$ and $\mu$ and not changing the Young diagrams of the other conditions. \label{full row}
		\item If an entire column of the block for $\gr(k,m)$ is filled by $\lambda$ and $\mu$, then $\boldsymbol{\lambda}$ is equivalent to the Schubert problem in $\gr(k,m-1)$ obtained by removing that column of $\lambda$ and $\mu$ and not changing the Young diagrams of the other conditions. \label{full column}
	\end{enumerate}
\end{lemma}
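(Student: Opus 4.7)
The plan is to translate each of the three combinatorial hypotheses into an inequality on the indices defining the Schubert conditions, and then extract a genuinely geometric statement about the common intersection $\Omega_{\boldsymbol{\lambda}}\mathbf{F}$. First I will set up the indexing: with the labeling in Figure \ref{fig: one condition}, the box in row $r$ (from the top) and column $c$ (from the left) of the $k\times(m-k)$ block has label $m-k+r-c$. Tracing this through the two placements shows that $\lambda$ covers exactly the boxes $(r,c)$ with $c\le\lambda_r$, while the $180^\circ$-rotated $\mu$ in the lower-right covers $(r,c)$ with $c\ge (m-k)-\mu_{k+1-r}+1$. Thus overlap in row $r$ is equivalent to $\lambda_r+\mu_{k+1-r}\ge m-k+1$, filling all of row $r$ is equivalent to $\lambda_r+\mu_{k+1-r}=m-k$, and filling column $c$ is equivalent to the existence of $a$ with $\lambda_a\ge c$, $\mu_{k-a}\ge m-k-c+1$ and $\lambda_{a+1}<c$, $\mu_{k-a+1}<m-k-c+1$ (with no overlap).

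For part (\ref{no soln}), I will use Kleiman transversality together with the generic dimension formula $\dim(F_a\cap G_b)=\max(0,a+b-m)$. Let $F_\bullet,G_\bullet$ be the flags defining $\lambda$ and $\mu$. The two Schubert conditions at rows $r$ and $k+1-r$ require
\[
  \dim\bigl(H\cap F_{m-k+r-\lambda_r}\bigr)\ge r,\qquad \dim\bigl(H\cap G_{m+1-r-\mu_{k+1-r}}\bigr)\ge k+1-r.
\]
The two flag subspaces have dimensions summing to $m+1-\lambda_r-\mu_{k+1-r}$, which under the overlap hypothesis is at most $m$; for generic flags they therefore meet trivially. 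But then $H$ must contain a direct sum of dimension $r+(k+1-r)=k+1$, contradicting $\dim H=k$. Hence the generic intersection is empty.

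For part (\ref{full row}), the same computation with $\lambda_r+\mu_{k+1-r}=m-k$ now gives two flag subspaces whose dimensions sum to $m+1$, so generically they meet in a line $L=F_{m-k+r-\lambda_r}\cap G_{k+1-r+\lambda_r}$. The same dimension inequality forces $L\subseteq H$ for every solution $H$. I will then pass to the quotient $V/L$, giving a $(k-1)$-plane $H/L$ in a space of dimension $m-1$, and show that each remaining Schubert condition descends cleanly. The key check is that, for a generic flag $F^\ell_\bullet$, the quotients $(F^\ell_j+L)/L$ give an honest flag in $V/L$ whose Schubert conditions pull back to the original ones with the same partition, while the conditions from $F_\bullet$ (resp.\ $G_\bullet$) at rows above $r$ are unchanged and those at rows below $r$ shift by one because $L$ sits inside the corresponding flag subspaces; the condition at row $r$ (resp.\ $k+1-r$) is automatically absorbed by $L\subseteq H$. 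Combinatorially this is exactly the deletion of one row from $\lambda$ and $\mu$ and gives a Schubert problem in $\gr(k-1,m-1)$. Part (\ref{full column}) is the dual argument: under the column-filling hypothesis, the relevant flag subspaces have dimensions summing to $m-1$, and the Schubert conditions force $H\subseteq W:=F_{m-k+a-c}+G_{k-a+c-1}$, a generic hyperplane. I then restrict to $\gr(k,W)=\gr(k,m-1)$ using the induced flags $\widetilde F^\ell_j:=F^\ell_{j+1}\cap W$ and verify, by the same bookkeeping, that the conditions from $F_\bullet$ in rows $i\le a$ lose one box (because $F_{m-k+i-\lambda_i}\subseteq W$ and reindexing shifts the flag label by one), those in rows $i>a$ are unchanged, and similarly for $\mu$.

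The one genuinely delicate step will be the reindexing of the flags in the reduction: showing that the modified flags on $V/L$ (or on $W$) are generic and that the bookkeeping at the special index $m-k+r-\lambda_r$ (where the dimension jump occurs) does not accidentally strengthen or weaken a condition. I will handle this by proving the identity $(H\cap F^\ell_j+L)/L=(H/L)\cap(F^\ell_j+L)/L$ whenever $L\subseteq H$ (a short elementary argument), and then tracking which indices $j$ satisfy $L\subseteq F^\ell_j$ generically; outside this, the bijection with Schubert conditions in the smaller Grassmannian is routine.
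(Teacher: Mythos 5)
Your proposal follows essentially the same route as the paper's own proof: the identical generic-position dimension count for part (\ref{no soln}), the forced line $L=F_{m-k+r-\lambda_r}\cap G_{\bullet}$ and passage to the quotient $V/L$ for part (\ref{full row}), and the forced hyperplane $W$ with restricted flags for part (\ref{full column}); your extra bookkeeping (the identity for $(H\cap F^\ell_j+L)/L$ and the tracking of which $j$ satisfy $L\subseteq F^\ell_j$) just makes explicit what the paper states without proof. The only blemish is an arithmetic slip in part (\ref{no soln}): the two flag subspaces' dimensions sum to $2m-k+1-\lambda_r-\mu_{k+1-r}$, not $m+1-\lambda_r-\mu_{k+1-r}$, though the inequality you conclude (at most $m$ exactly under the overlap hypothesis) is the correct one.
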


Before proving the lemma, we consider the problem $\ThThI \cdot \FTT \cdot \T^2 \cdot \I$ in $\gr(4,9)$. By choosing $\ThThI$ and $\FTT$, we satisfy relation \ref{full row} of the above lemma as seen in Figure \ref{fig: degenerate conditions}. Thus, by removing those rows of the two partitions, we obtain the equivalent Schubert problem $\ThI \cdot \FT \cdot \T^2 \cdot \I$ in $\gr(3,8)$.

\begin{proof}
	Let $\boldsymbol{\lambda}=(\lambda,\dots,\lambda^r)$ be a Schubert problem in $\gr(k,m)$ with defining flags \linebreak $F^1_\bullet, \dots, F^r_\bullet$, respectively. Let $F_\bullet$ and $G_\bullet$ be the defining flags for $\lambda$ and $\mu$, respectively. We begin by considering relation \ref{no soln}. Suppose $\lambda_r=a, \mu_{k-r+1}=b$, and $a+b>m-k$. Assume for contradiction that there is an $H \in \gr(k,m)$ satisfying $\boldsymbol{\lambda}$, we have $\dim(H \cap F_{m-k+r-a}) \geq r$ and $\dim(H \cap G_{m-r+1-b}) \geq k-r+1$. Since $H$ is $k$-dimensional, we must have $\dim(H \cap  F_{m-k+r-a} \cap G_{m-r+1-b}) \geq 1$. Moreover, $F_\bullet$ and $G_\bullet$ are in general position, so $\dim(F_{m-k+r-a} \cap G_{m-r+1-b}) = \max\{0,(m-k+r-a)+(m-r+1-b)-m\} = \max\{m-k+1-(a+b),0\} = 0$. This is a contradiction. No such $H$ may exist.
	
	We now prove relation \ref{full row}. In this case, we assume we have the same set up as in the previous case except $a+b=m-k$. Then, by going through the same arguments as above, we see $\dim(H \cap F_{m-k+r-a} \cap G_{m-k-r+1-b}) \geq 1$ and $\dim(F_{m-k+r-a} \cap G_{m-k-r+1-b})=1$. Thus, $L:=F_{m-k+r-a} \cap G_{m-k-r+1-b}$ is a line in $H$. Let $V = \mathbb{C}^m/L \cong \mathbb{C}^{m-1}$. We define $\widehat{F}^i_\bullet$ to be the flags where $\widehat{F}^i_j$ is the $j$-dimensional space in the chain $F_1/L \subset \dots \subset F_m/L$. We claim that $H/L$ satisfies the following incidence conditions:
	\begin{enumerate}
		\item $\dim(H \cap F_j)=\dim(H \cap \widehat{F}_j)$ for $j<m-k+r-a$
		\item $\dim(H \cap G_j)=\dim(H \cap \widehat{G}_j)$ for $j<m-k-r+1-b$
		\item $\dim(H \cap F_j)=\dim(H \cap \widehat{F}_j)-1$ for $j \geq m-k+r-a$.
		\item $\dim(H \cap G_j)=\dim(H \cap \widehat{G}_j) -1 $ for $j \geq m-k-r+1-b$
	\end{enumerate} 
	Thus, determining $H \in \gr(k,m)$ satisfying $\boldsymbol{\lambda}$ is equivalent to determining $\widehat{H} \in \gr(k-1,V)$ satisfying the above conditions. Note that, as Young diagrams, these conditions are $\lambda$ and $\mu$ with the $r^\text{th}$ and $(k-r+1)^\text{th}$ row removed, respectively. We also have the conditions $\dim(\widehat{H} \cap \widehat{F}^i_j)=\min\{k-1, \dim(H \cap F^i_j)\}$ for $F^i_\bullet \neq F_\bullet, G_\bullet$. Thus, the Young diagrams for these conditions remain unchanged. This concludes the proof of \ref{full row}.
	
	Finally, we prove relation \ref{full column}. Suppose that this relation is satisfied by $\lambda$ and $\mu$. Then, for some $i$, we have $\dim(H \cap F_{m-i-1})+\dim(H \cap G_i)=\dim(H)$. Thus, we have $H \subset \langle F_{m-i-1}, G_i \rangle := V \equiv \mathbb{C}^{m-1}$. If we take $\widehat{F}^i_\bullet:= F^i_\bullet \cap V$, we get conditions on $H$ in $\gr(k,V)$ with the desired Young diagrams.
\end{proof}

\begin{definition}
	We say that a Schubert problem is \emph{reduced} if it does not satisfy any of the relations in Lemma \ref{lem: deficient}.
\end{definition}

\begin{figure}
	\begin{center}
		\begin{tabular}{c|c|c|c|c|c|}
			\cline{2-6}
			$6$ & \cellcolor{blue!25} $5$ & \cellcolor{blue!25} $4$ & \cellcolor{blue!25} $3$ & $2$ & $1$ \\
			\cline{2-6}
			$7$ & \cellcolor{blue!25} $6$ & \cellcolor{blue!25} $5$ & \cellcolor{blue!25} $4$ & \cellcolor{red!25} $3$ & \cellcolor{red!25} $2$ \\
			\cline{2-6}
			$8$ & \cellcolor{blue!25} $7$ & $6$ & $5$ & \cellcolor{red!25} $4$ & \cellcolor{red!25} $3$ \\
			\cline{2-6}
			$9$ & $8$ & \cellcolor{red!25} $7$ & \cellcolor{red!25} $6$ & \cellcolor{red!25} $5$ & \cellcolor{red!25} $4$ \\
			\cline{2-6}
		\end{tabular}
		\caption{The Young diagrams for $(3,3,1,0)$ (blue) and $(4,2,2,0)$ (red) placed in opposite corners of the labeled block for $\gr(4,9)$.}\label{fig: degenerate conditions}
	\end{center}
\end{figure}

Let $\boldsymbol{\lambda}$ be a Schubert problem on $\gr(k,V)$. Since we are working over the intersection of several Schubert varieties, it is natural to consider these problems as taking place over a product of flag spaces. That is, if we consider 
\begin{align}
 &\pi: Y \rightarrow X  \nonumber \\
 Y = \{(H,F^1_\bullet,F^2_\bullet,&\dots,F^r_\bullet) \mid F^i_\bullet \in \fl(V) \text{ and } H \in \Omega_{\boldsymbol{\lambda}}F_\bullet \} \label{Schubert map} \\
 X = \{(F^1_\bullet,F^2_\bullet,&\dots,F^r_\bullet) \mid F^i_\bullet \in \fl(V) \} \nonumber
\end{align} 

\noindent where $\pi$ is the map that forgets the first coordinate, then the solution set to an instance of a Schubert problem is the set of first coordinates of the preimage of the defining flags. Note that both $X$ and $Y$ are irreducible. Since $X$ is a product of irreducible varieties, it is clear that $X$ is irreducible. To see that $Y$ is irreducible, we note that the fibre over $H \in \gr(k,V)$ is a product of Schubert varieties in the flag varieties. Since each Schubert variety is irreducible, the fibres are as well. The base space $\gr(k,V)$ is also irreducible, thus the total space $Y$ is irreducible.

We now follow \cite{Harris_Gal} in constructing the Galois and monodromy groups of Schubert problems. We let $x \in X$ be a general point. By Kleiman's Transversality Theorem \cite{Kleiman} and the principle of conservation of number \cite[Ch. 10]{Fulton_Intersection}, the fibre over $x$ has $n$ points, $\pi^{-1}(x)=\{y_1, \dots, y_n \}$, where $n$ is the number of solutions to the Schubert problem. If we let $\pi^*:K(X) \rightarrow K(Y)$ be the map on the function fields induced by $\pi$, then by Theorem \ref{primitive element theorem}, there exists $f \in K(Y)$ generating $K(X)$ and satisfying a degree $n$ polynomial
\[
 f^n+g_1f^{n-1}+\dots+g_n=0
\]
where $g_1,\dots,g_n \in K(X)$.

Let $\Delta$ be the field of meromorphic functions in a neighborhood around $x$ modulo the equivalence relation $g \sim h$ if $g=h$ in some  neighborhood of $x$. This is called the germ of meromorphic functions around $x$. Similarly, let $\Delta_i$ be the field of germs of meromorphic functions around $y_i$. Consider the functions $\phi:K(X) \rightarrow \Delta$, the natural inclusion of $K(X)$ into $\Delta$, and $\phi_i:K(Y) \rightarrow \Delta$, the inclusion obtained by composing the natural restriction $K(Y) \rightarrow \Delta_i$ with the map $\Delta_i \rightarrow \Delta$ induced by $\pi$. Then if we let $\tilde{g}_j=\phi(g_j)$ and $\tilde{f}_i=\phi_i(f)$. Then for each $\tilde{f}_i$, we have
\[
 \tilde{f}^d_i+\tilde{g_1}\tilde{f}^{d-1}_i+\dots+\tilde{g}_n = 0.
\]

Note that the $\tilde{f}_i$ are all roots of this polynomial. Let $L \subset \Delta$ be the field generated by the subfields $\phi_i\big(K(Y)\big)$.

\begin{definition}
	The \emph{Galois group} of the Schubert problem $\boldsymbol{\lambda}$ is  $\g\big(L/\phi(K(X))\big)$ where $Y$ and $X$ are defined as in \ref{Schubert map}. This is denoted $\g(\boldsymbol{\lambda})$.
\end{definition}

On the other hand, consider $U \subset X$ a Zariski open subset such that, for all $x \in U$, $\pi^{-1}(x)$ is a set of $n$ distinct points. For any closed path $\gamma:[0,1] \rightarrow U$ with base point $x$ and any point $y_i \in \pi^{-1}(x)$, there is a unique lifting $\tilde{\gamma}_i$ of $\gamma$ to a closed path in $\pi^{-1}(U)$ with $\tilde{\gamma}_i(0)=y_i$. If we define the permutation $\sigma \in S_n$ given by $y_i \mapsto \tilde{\gamma}_i(1)$, then we get a homomorphism $\pi_1(U,x) \rightarrow S_n$. 

\begin{definition}
	The \emph{monodromy group} of a Schubert problem is the image of the homomorphism $\pi_1(U,x) \rightarrow S_n$.
\end{definition}

We observe that the two permutation groups defined above are equivalent.

\begin{theorem}[see page 689 of \cite{Harris_Gal}]\label{monodromy is galois}
	For $\pi:Y \rightarrow X$ as above, the monodromy group equals the Galois group.
\end{theorem}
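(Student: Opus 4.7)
The plan is to realize both groups as permutation groups on the fibre $\pi^{-1}(x) = \{y_1, \ldots, y_n\}$, or equivalently on the roots $\{\tilde{f}_1, \ldots, \tilde{f}_n\}$ of the degree $n$ polynomial over $\phi(K(X))$ satisfied by a primitive element $f$, and to prove mutual containment. Since $L = \phi(K(X))(\tilde{f}_1, \ldots, \tilde{f}_n)$ is the splitting field of this polynomial over $\phi(K(X))$, any element of the Galois group is determined by its permutation of the $\tilde{f}_i$, and any element of the monodromy group is determined by its permutation of the $y_i$, which by the definition of the $\phi_i$ matches up with its action on the $\tilde{f}_i$.

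First I would show monodromy $\subseteq$ Galois. Given a loop $\gamma \in \pi_1(U, x)$, analytic continuation along $\gamma$ permutes the sheets of the covering $\pi^{-1}(U) \to U$, inducing a permutation $\sigma_\gamma$ of $\{y_1, \ldots, y_n\}$. I would extend $\sigma_\gamma$ to an automorphism of $L$ by the rule $\phi_i(g) \mapsto \phi_{\sigma_\gamma(i)}(g)$ for each $g \in K(Y)$. Well-definedness follows because analytic continuation of a germ in $\Delta_i$ along $\gamma$ produces the germ in $\Delta_{\sigma_\gamma(i)}$; the map is a ring homomorphism since it is the composition of analytic continuation, a process that respects sums and products of germs. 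It fixes $\phi(K(X))$ pointwise because any germ pulled back from $X$ is single-valued on $U$. Thus $\sigma_\gamma \in \g(L / \phi(K(X)))$.

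Second I would establish the reverse containment using the Galois correspondence. Let $M \subseteq \g(L / \phi(K(X)))$ be the image of the monodromy homomorphism; it suffices to show that the fixed field $L^M$ equals $\phi(K(X))$, since then $|M| = [L : L^M] = [L : \phi(K(X))] = |\g(\boldsymbol\lambda)|$. Let $h \in L^M$. By construction, $h$ is represented by a germ of meromorphic function at $x$ which is invariant under analytic continuation along every loop in $U$, so $h$ extends to a single-valued meromorphic function on $U$. Because $h$ is algebraic over $\phi(K(X))$ (it lies in the finite extension $L$), this extension continues across the analytic divisor $X \setminus U$ to a meromorphic function on all of $X$. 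Since $X$ is a product of flag varieties, in particular a smooth projective irreducible variety, a meromorphic function on $X$ is a rational function, hence lies in $\phi(K(X))$.

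The main obstacle is the descent step of the second part: verifying that a monodromy-invariant germ at $x$ actually extends to a rational function on $X$. This requires three ingredients working together: single-valuedness on $U$ (from $M$-invariance), extension across the branch locus (using algebraicity of $h$ over $K(X)$ and the Riemann extension theorem applied to the normalization of $X$ in $L$), and the identification of meromorphic functions on the smooth projective variety $X$ with rational functions. Once this descent is in hand, the chain of equalities $[L : \phi(K(X))] = |G| = |M| \le |G|$ forces $M = G$, completing the proof.
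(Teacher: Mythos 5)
The paper does not actually prove this statement---it is quoted directly from page 689 of \cite{Harris_Gal}---so there is no internal proof to compare against; your argument is, in substance, exactly Harris's proof, and it is correct. The two containments are established the standard way: analytic continuation along loops in $U$ permutes the subfields $\phi_i\big(K(Y)\big)$ while fixing $\phi\big(K(X)\big)$ pointwise, giving monodromy $\subseteq$ Galois; and the fixed field of the monodromy image is $\phi\big(K(X)\big)$ because a monodromy-invariant element of $L$ is a single-valued meromorphic function on $U$ that is algebraic over $K(X)$, hence extends meromorphically across $X \setminus U$ and is rational on the smooth projective variety $X$. Two small points worth making explicit if you write this up fully: the permutation action of $\g\big(L/\phi(K(X))\big)$ on the $\tilde{f}_i$ is faithful only because the $\tilde{f}_i$ are distinct (which holds since $f$ is a primitive element and $x$ is general, and uses characteristic $0$ for separability so that $L/\phi\big(K(X)\big)$ is Galois and $[L:\phi(K(X))]=\lvert \g(\boldsymbol{\lambda})\rvert$); and $X\setminus U$ need not be a divisor, though that only makes the extension step easier---the real content there is the Riemann extension theorem (using local boundedness of $h$ from its monic equation over $K(X)$) together with the fact that meromorphic functions on a projective variety are rational.
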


Moreover, since $Y$ is irreducible, there exists a closed path $\gamma:[0,1] \rightarrow U$ that lifts to a path $\tilde{\gamma}$ in $\pi^{-1}(U)$ with $\tilde{\gamma}(0)=y_1$ and $\tilde{\gamma}(1)=y_i$ for any $i$. Thus, we have the following.

\begin{cor}\label{cor: transitive}
	For any Schubert problem $\boldsymbol{\lambda}$, $\g(\lambda)$ is transitive.
\end{cor}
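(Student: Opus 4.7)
The plan is to use the equivalence between the Galois group and the monodromy group (Theorem \ref{monodromy is galois}), and then deduce transitivity of the monodromy action from the irreducibility of $Y$. In the paragraph immediately preceding the corollary, the author has already remarked that because $Y$ is irreducible, any two points in a generic fiber can be joined by a path. So the job is to turn that remark into a clean argument.

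First I would fix the setup as in the excerpt: let $U \subset X$ be the Zariski open subset over which $\pi$ is a covering map of degree $n$, fix a basepoint $x \in U$ with fiber $\pi^{-1}(x) = \{y_1,\dots,y_n\}$, and let $\pi^{-1}(U) \subset Y$ denote the open preimage. The key input is that $\pi^{-1}(U)$ is Zariski open in the irreducible variety $Y$, hence itself irreducible, and therefore path-connected in the classical (analytic) topology. In particular, for each $i$ there is a continuous path $\tilde\gamma_i : [0,1] \to \pi^{-1}(U)$ with $\tilde\gamma_i(0) = y_1$ and $\tilde\gamma_i(1) = y_i$.

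Next I would push these paths down: let $\gamma_i := \pi \circ \tilde\gamma_i : [0,1] \to U$. Since the endpoints of $\tilde\gamma_i$ both lie in the fiber over $x$, the image $\gamma_i$ is a loop in $U$ based at $x$, and by construction $\tilde\gamma_i$ is a lift of $\gamma_i$ starting at $y_1$. Therefore, under the covering-space description of monodromy recalled just above the corollary, the element $[\gamma_i] \in \pi_1(U,x)$ acts on the fiber by sending $y_1 \mapsto y_i$. Hence the monodromy group acts transitively on $\{y_1,\dots,y_n\}$, and by Theorem \ref{monodromy is galois} the same holds for $\g(\boldsymbol{\lambda})$.

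There is essentially no hard step, but the point requiring the most care is the passage from irreducibility (an algebro-geometric notion) to analytic path-connectedness of $\pi^{-1}(U)$; this is what makes the lift $\tilde\gamma_i$ exist in the first place and is exactly the content of the standard fact that a complex irreducible quasi-projective variety is connected in the classical topology. Once that is in hand, the rest is just the definition of the monodromy action.
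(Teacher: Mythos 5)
Your argument is correct and is essentially the paper's own: the sentence immediately preceding the corollary already asserts that irreducibility of $Y$ yields a closed path in $U$ lifting to a path from $y_1$ to $y_i$, and the corollary follows by combining this with Theorem \ref{monodromy is galois}. You have simply filled in the same reasoning in more detail, correctly flagging the passage from irreducibility to classical path-connectedness of $\pi^{-1}(U)$ as the one substantive step.
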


\begin{example}\label{ex: Galois of 4 lines}
	We consider the Schubert problem $\I^4$ in $\gr(2,4)$. Since $\Omega_{\I}F_\bullet = \{ H \in \gr(2,4) \mid \dim(H \cap F_2) \geq 1 \}$ may be reinterpreted as ``$H$ is a line in $\mathbb{P}^3$ that intersects the line $F_2$ nontrivially'', we see that this is the problem discussed in Example \ref{ex: 4 lines}. As previously, we consider the hyperboloid defined by three of the lines to help us construct the two solution lines.
	
	Since this problem has two solutions and the Galois group must be a transitive permutation group on the set of its solutions by Corollary \ref{cor: transitive}, $\g(\I^4)=S_2$. We can also show this directly by considering the monodromy group of the problem. If we rotate $\ell_4$ by $180^\circ$ about the point $p$ as labeled in Figure \ref{pic: 4 lines with point}, then we see that the solutions, $m_1$ and $m_2$, must move along the surface of the hyperboloid to continue intersecting all four lines. When the rotation of $\ell_4$ is complete, $m_1$ and $m_2$ will have switched places, thus the monodromy group contains the permutation $(m_1, m_2)$. Since this monodromy group must be a subgroup of $S_2$, we have determined that it is all of $S_2$. By Theorem \ref{monodromy is galois}, this is the Galois group of the Schubert problem as well.
\end{example}

\begin{figure}
	\centering
	\includegraphics[scale=0.4]{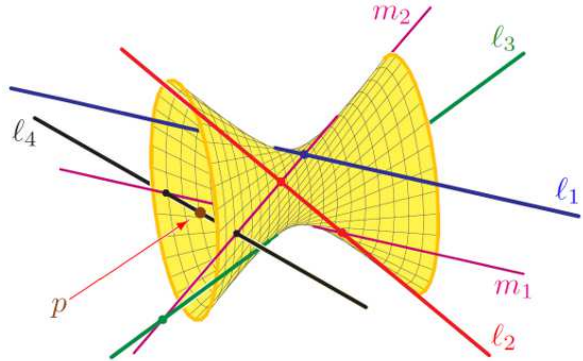} \vspace{-1cm}
	\caption{The problem of four lines with a marked point.}\label{pic: 4 lines with point}
\end{figure}

In \cite{Vakil_checkerboard}, a ``checkerboard tournament'' algorithm is described for determining the number of solutions to a Schubert problem. The algorithm consists of degenerations which transform an intersection of Schubert varieties of the form $\Omega_\lambda F_\bullet \cap \Omega_\mu G_\bullet$ into a union of Schubert varieties $\Omega_\tau F_\bullet$ where $\lvert \tau \rvert = \lvert \lambda \rvert + \lvert \mu \rvert$ using the Geometric Littlewood-Richardson rule \cite{Vakil_checkerboard}. These degenerations are encoded in a combinatorial game that involves moving colored checkers on a checkerboard. The root of a tournament is a Schubert problem, and each degeneration forms the next vertex in a tree. Whenever the game allows for more than one move from a certain variety, the associated vertex has an edge directed away from it corresponding to each of the different resulting varieties. We call these possible continuations of the game \emph{branches}. Eventually, a problem is degenerated to the point that it consists of a single irreducible component corresponding to a single partition. Such vertices are called \emph{leaves}. The number of solutions to a Schubert problem is the number of leaves in this tree. 

Additionally, the structure of the degeneration gives information about the Galois group of the original problem. When a variety in the tree is immediately proceeded by only a single variety, in which case we say it has one \emph{child}, then the Galois group of the parent variety contains the Galois group of the child variety. When a variety has two children, say $X$ and $Y$, the Galois group of the parent contains a subgroup of $\g(X) \times \g(Y)$ such that it maps surjectively onto each component. This allows us to apply the following.

\begin{theorem}[Goursat's Lemma \cite{Goursat}]\label{thm: Goursat}
	Let $G,G^\prime$ be groups and $H$ be a subgroup of $G \times G^\prime$ such that the projections $\pi_1 : H \rightarrow G$ and $\pi_2: H \rightarrow G^\prime$ are surjections with kernels $N$ and $N^\prime$, respectively. Then the image of $H$ in $G/\pi_1(N^\prime) \times G^\prime/\pi_2(N)$ is the graph of an isomorphism $G/\pi_1(N^\prime) \cong G^\prime/\pi_2(N)$.
\end{theorem}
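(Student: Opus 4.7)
The plan is to build the claimed isomorphism directly out of $H$ and then to observe that the image of $H$ in the product of quotients is literally its graph. A preliminary structural remark: both $N$ and $N'$ are kernels, hence normal in $H$, so $\pi_1(N')$ is normal in $\pi_1(H)=G$ and $\pi_2(N)$ is normal in $\pi_2(H)=G'$. Thus both quotients on the right-hand side really are groups, and we have a well-defined homomorphism $H \to G/\pi_1(N') \times G'/\pi_2(N)$ given by $(g,g') \mapsto (g\pi_1(N'),\, g'\pi_2(N))$.

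First I would define a candidate map $\varphi \colon G \to G'/\pi_2(N)$ as follows. Given $g \in G$, surjectivity of $\pi_1$ lets us choose some $g' \in G'$ with $(g,g') \in H$, and we set $\varphi(g) := g'\pi_2(N)$. The key step is well-definedness in the choice of $g'$: if $(g,g'_1)$ and $(g,g'_2)$ both lie in $H$, then $(e_G, g'_1(g'_2)^{-1}) = (g,g'_1)(g,g'_2)^{-1} \in H$, so this element lies in $H \cap (\{e_G\} \times G')$, which is exactly $N$ under the identification by $\pi_2$. Hence $g'_1(g'_2)^{-1} \in \pi_2(N)$, and the coset $g'\pi_2(N)$ does not depend on the choice. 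That $\varphi$ is a homomorphism is immediate since $(g_1g_2, g'_1g'_2) \in H$ whenever the pieces are, and $\varphi$ is surjective because $\pi_2$ is.

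Next I would compute $\ker \varphi$. If $\varphi(g)$ is trivial, there is $(g,g') \in H$ with $g' \in \pi_2(N)$, so some $(e_G, g') \in H$ as well; multiplying $(g,g')$ by $(e_G,g')^{-1}$ yields $(g,e_{G'}) \in H$, giving $g \in \pi_1(N')$. Conversely, any $g \in \pi_1(N')$ has $(g, e_{G'}) \in H$, so $\varphi(g)$ is trivial. The first isomorphism theorem then produces an isomorphism $\bar\varphi \colon G/\pi_1(N') \xrightarrow{\sim} G'/\pi_2(N)$. Finally, I would observe that the image of $H$ in $G/\pi_1(N') \times G'/\pi_2(N)$ is precisely $\{(\bar g, \bar\varphi(\bar g)) : \bar g \in G/\pi_1(N')\}$: the well-definedness argument above says that $g$ determines $\bar\varphi(\bar g)$ uniquely in the second factor, and surjectivity of $\pi_1$ says every first coordinate occurs. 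This is exactly the assertion that the image is the graph of $\bar\varphi$.

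The only real obstacle is the well-definedness verification in the definition of $\varphi$, which is what forces the appearance of $\pi_2(N)$ (rather than a smaller subgroup) in the target quotient; everything else is a routine application of the first isomorphism theorem and the defining property of $H$ as a subgroup of the product.
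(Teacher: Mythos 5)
Your argument is correct and complete: the well-definedness check via $(e_G, g_1'(g_2')^{-1}) \in H \cap (\{e_G\}\times G') = N$, the identification $\ker\varphi = \pi_1(N')$, and the application of the first isomorphism theorem are all carried out properly, and the normality observations needed to make the quotients meaningful are in place. The paper itself supplies no proof of this statement --- it only cites the reference --- so there is nothing to compare against; your proof is the standard one and would serve as a valid substitute for the citation.
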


Whenever we come to a branch where the Galois group of both children is at least alternating, we combine Corollary \ref{cor: transitive} with Theorem \ref{thm: Goursat} to see what groups are possible. If both branches have only one leaf, than the parent group is a transitive subgroup of $S_2$, thus it is $S_2$. On the other hand, if one branch has $n$ leaves and the other has $\ell$ leaves with $n \neq \ell$, then the Galois group is a transitive subgroup of $S_{n+\ell}$ containing $A_n \times A_\ell$. In this case, we can show the group is at least alternating. In \cite{Vakil_checkerboard}, Vakil worked through these details and found one other case where the Galois group must be at least alternating.

\begin{theorem}[Theorems 5.2 and 5.10 in \cite{Vakil_Induction}]\label{alg: Vakil}
	Suppose we are given a Schubert problem such that there is a directed tree where each vertex with out-degree two satisfies one of the following:
	\begin{enumerate}
		\item each has a different number of leaves on the two branches
		\item each has one leaf on each branch
		\item there are $n \neq 6$ leaves on each branch and the corresponding Galois group is two-transitive
	\end{enumerate}
	Then the Galois group of the Schubert problem is at least alternating.
\end{theorem}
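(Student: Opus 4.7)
The plan is to induct up the tree from the leaves, proving at each vertex $v$ the stronger statement that the Galois/monodromy group $G_v$ (acting on the $n_v$ solutions that correspond to leaves of the subtree rooted at $v$) is at least alternating. Applied at the root, this yields the theorem. The base case is trivial: a leaf has $n_v = 1$ and $G_v = \{1\}$. For a vertex $v$ of out-degree one with unique child $v'$, the discussion preceding the theorem gives $G_v \supseteq G_{v'}$, so the inductive hypothesis transfers immediately.

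The substantive work is at an out-degree-two vertex $v$ with children $v_1, v_2$ having $n_1, n_2$ leaves, so $n_v = n_1 + n_2$. By the structural remark preceding the theorem, $G_v$ contains a subgroup $H$ which sits inside $G_{v_1} \times G_{v_2}$ and surjects onto each factor; moreover, $G_v$ is transitive on $[n_v]$ (this propagates through the tree from Corollary 2.26 at the root). By Goursat's Lemma (Theorem 2.27), $H$ is determined by an isomorphism $G_{v_1}/\pi_1(N') \cong G_{v_2}/\pi_2(N)$ of quotients by normal subgroups. The inductive hypothesis $G_{v_i} \supseteq A_{n_i}$, together with simplicity of $A_{n_i}$ for $n_i \geq 5$, restricts the normal subgroup lattice of $G_{v_i}$ to essentially $\{1\}$, $A_{n_i}$, and $G_{v_i}$, giving tight control on the possible $H$.

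In Case~1 ($n_1 \neq n_2$), the simple groups $A_{n_1}$ and $A_{n_2}$ are non-isomorphic, so the matching of quotients in Goursat forces both $N$ and $N'$ to contain the respective alternating subgroups; hence $H \supseteq A_{n_1} \times A_{n_2}$. Transitivity of $G_v$ on $[n_v]$ supplies an element that mixes the two natural blocks of sizes $n_1, n_2$, and a standard permutation-group argument then yields $G_v \supseteq A_{n_v}$. Case~2 ($n_1 = n_2 = 1$) is immediate: $G_v$ is a transitive subgroup of $S_2$, so $G_v = S_2$.

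The main technical obstacle is Case~3, where $n_1 = n_2 = n \neq 6$ and $G_v$ is two-transitive on the $2n$ leaves. Goursat now permits a ``diagonal'' $H = \{(g, \varphi(g)) : g \in G_{v_1}\}$ for an isomorphism $\varphi : G_{v_1} \to G_{v_2}$, so one cannot directly extract $A_n \times A_n$. The two-transitivity hypothesis on $G_v$ is the crucial extra input: it rules out imprimitive actions on the two blocks of $n$ points and forces sufficient mixing. The exclusion $n \neq 6$ is essential because $A_6$ admits an exceptional outer automorphism; a twisted diagonal combined with a block-swap could conceivably realize a two-transitive action on $12$ points whose image does not contain $A_{12}$. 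For $n \neq 6$ the outer automorphism group of $A_n$ is at most $\mathbb{Z}/2$ and is realized by conjugation in $S_n$, so $\varphi$ can be straightened by conjugation and the Case~1 style argument applies. Carefully carrying out this case analysis, in particular ruling out the twisted-diagonal possibilities using the interaction of two-transitivity with the Goursat isomorphism, is the hardest part of the proof.
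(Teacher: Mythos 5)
First, a point of context: the paper does not prove this theorem. It is imported verbatim from Vakil (Theorems 5.2 and 5.10 of \cite{Vakil_Induction}), and the paragraph preceding it only sketches cases (1) and (2) by combining Corollary \ref{cor: transitive} with Goursat's Lemma. Your strategy --- induct up the tree, and at each out-degree-two vertex combine Goursat with transitivity and a Jordan-type primitivity argument --- is the same as that sketch. But as a proof it has a genuine gap in Case~1. You claim that when $n_1 \neq n_2$, Goursat forces $H \supseteq A_{n_1} \times A_{n_2}$. That deduction uses the normal-subgroup lattice you describe, which is only correct for $n_i \geq 5$; for small blocks two at-least-alternating groups of different degrees can share a nontrivial common quotient. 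Concretely, $S_3 \cong S_4/V_4$, so $H = \{(\bar g, g) : g \in S_4\} \leq S_3 \times S_4$ has order $24$, surjects onto both factors, and does not contain $A_3 \times A_4$ (order $36$). Worse, the conclusion cannot be recovered from the ``projections are at least alternating'' hypothesis alone: the stabilizer of a line in $PSL(3,2)$ acting on the seven points of the Fano plane is exactly such an $H$ (orbits of sizes $3$ and $4$, surjecting onto $S_3$ and $S_4$), it lies in the transitive group $PSL(3,2)$ of order $168$, and $PSL(3,2) \not\supseteq A_7$. So one genuinely needs the containment $A_{n_1}\times A_{n_2} \leq H$, and for small $n_i$ this requires input beyond Goursat; that is precisely where the content of Vakil's argument lies. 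These are not negligible corner cases here, since the deficient problems in this thesis typically have $4$ or $6$ solutions.

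Case~3 is also not actually carried out. Your plan is to straighten the Goursat isomorphism $\varphi$ by conjugation (using that $\mathrm{Out}(A_n)$ is tame for $n \neq 6$) and then ``apply the Case~1 style argument,'' but that argument rests on $H$ containing the full product $A_n \times A_n$, and a diagonal subgroup $\{(g,\varphi(g))\}$ never does, straightened or not. The real proof must exploit the two-transitivity of $G$ directly, e.g.\ via the fact that the diagonal contains nonidentity elements of small support (pairs of $3$-cycles) together with a Jordan/Manning-type minimal-degree theorem for primitive groups, or via the classification of two-transitive groups. You correctly identify the outer automorphism of $S_6$ as the source of the $n = 6$ exclusion, but the positive argument for $n \neq 6$ is missing.
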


\chapter{EXPLORATION OF GALOIS GROUPS}

By the results in \cite{g2n} and \cite{g3n}, we know that the Galois group of every Schubert problem in $\gr(2,m)$ is at least alternating and that every Schubert problem in $\gr(3,m)$ is doubly-transitive. For every Schubert problem in $\gr(4,8)$, we either know its exact Galois group or know its Galois group is at least alternating. In the following, we will explore the Schubert problems on $\gr(4,9)$.

In $\gr(4,9)$, there are a total of $31,806$ reduced Schubert problems with at least two solutions. We are limiting our attention to reduced problems on $\gr(4,9)$ since, by Lemma \ref{lem: deficient}, the remaining problems are equivalent to some Schubert problem on $\gr(k,m)$ where $k \leq 4$ and $m \leq 8$. Thus, every problem on $\gr(4,9)$ that we are not treating here is equivalent to some problem examined in either \cite{g2n} or \cite{g3n}. Of the reduced problems on $\gr(4,9)$, we will show that all except $148$ have a Galois group that is at least alternating. We will completely determine the Galois groups of these exceptional cases and group them by the geometry restricting their Galois groups.

\section{Algorithmic sampling of the Galois group}

When the Galois group of a Schubert problem is not at least alternating, we say that it is \emph{deficient}. Our first step in analyzing the Galois groups of all Schubert problems of $\gr(4,9)$ is to sort the problems into two categories: ones with Galois groups that are at least alternating and ones with unknown Galois group. The first sieve we use is applying Vakil's checkerboard tournament algorithm \cite{Vakil_checkerboard} to the list of all Schubert problems and discarding the ones that satisfy Theorem \ref{alg: Vakil}. This lowered the initial set of $31,806$ Schubert problems with up to $1,662,804$ solutions to a set of $225$ Schubert problems with up to $420$ solutions.

For the remainder of the problems, we computed cycle types of random elements in the Galois group of the Schubert problem. We used a \texttt{Python} script \cite{Python} to automate the process of sorting the input and output of several simultaneous \texttt{Singular} programs \cite{Singular}. We used the following algorithm to sample cycle types found in the Galois group of the given Schubert problems.

\begin{alg}[The Frobenius algorithm, see Section 5.4 of \cite{Campo_Experimentation}]\label{alg:Frob}
	~\\
	\textbf{Input}: A Schubert problem $\boldsymbol{\lambda}$ with $n$ solutions, a prime $p$, and an integer $N$ \\
	\textbf{Output}: Either the string ``Galois group is $S_n$'' or a list of cycle types in $\g(\boldsymbol{\lambda})$ \\
	\begin{enumerate}
	  \item Set $c_n=c_{n-1}=c_{\text{prime}}= \text{counter}:=0$ and cycles$:=\{\}$.
	  \item While counter$< N$, do:
	  \begin{enumerate}
		  \item Choose a random point $\mathbf{F}=(F^1_\bullet, \dots, F^r_\bullet) \in \prod_{i=1}^r \fl(m)$.
		  \item Generate the ideal $I$ such that $V(I)= \Omega_{\boldsymbol{\lambda}} \mathbf{F}$ and $I=I\big(V(I)\big)$ using (\ref{Schubert equation}).
		  \item Reduce $I$ modulo $p$ 
		  \item Compute an eliminant $g(x) \in \mathbb{Z}/p\mathbb{Z}[x]$ modulo $p$.
		  \item If $\deg(g)<n$ or $g$ is not square-free, set counter$:=\text{counter}+1$ and return to Step 2(a). 
		  \item Factor $g(x)=g_1g_2\cdots g_t$ in $\mathbb{Z}/p\mathbb{Z}[x]$.
		  \item If $T=1$, set $c_n:=1$.
		  \item If $T=2$ and either $\deg(g_1)=n-1$ or $\deg(g_2)=n-1$, set $c_{n-1}:=1$.
		  \item If $\deg(g_i)=q$ for some $i$ and some prime $q$ such that $\lceil \frac{n}{2} \rceil \leq q < n-2$ and $\deg(g_j) \neq \lceil \frac{n}{2} \rceil$ for any $j\neq i$, set $c_{\text{prime}}:=1$.
		  \item If $c_n=c_{n-1}=c_{\text{prime}}=1$, set counter$:=N$. Otherwise, append \linebreak $\big(\deg(g_1),\dots,\deg(g_T)\big)$ to cycles and set counter$:=\text{counter}+1$ 
	 \end{enumerate}
	 \item If $c_n=c_{n-1}=c_{\text{prime}}=1$, return ``Galois group is $S_n$''. Otherwise, return cycles.
	\end{enumerate}
\end{alg}

Note that this algorithm only works when a Schubert problem has at least six solutions. When the number of solutions is fewer than six, the possible Galois groups are small enough that the algorithm can efficiently determine if it is the full symmetric group via an exhaustive search of cycle types present in the group.

\begin{proof}[Proof of correctness]
	We begin by showing that the calculated number sequences in our algorithm are cycle types of elements in $\g(\boldsymbol{\lambda})$. Since the polynomial we calculate in Step 2(d) is the eliminant of the ideal defining the Schubert problem, it is a monic polynomial whose solutions correspond to $x_1$-coordinates of solutions to the Schubert problem. There are now two possible obstructions to applying Theorem \ref{thm: Dedekind}: the flags we chose may not be in general position leading to an eliminant that is not square-free or a solution that is not in the dense subset of $\gr(4,9)$ that our equations describe (see Example \ref{ex: equations}). By Corollary 1.6 of \cite{Vakil_Induction}, over any finite field there is a positive density of points in the flag space for which we may apply Theorem \ref{thm: Dedekind}, and in practice it is rare to choose flags for which we cannot. If we do pick flags for which the theorem does not apply, they are discarded in Step 2(e). Thus, using Theorem \ref{thm: Dedekind}, we know that $g_{\deg}:=\big(\deg(g_1),\dots,\deg(g_t)\big)$ corresponds to a cycle decomposition of an element of $\g(\boldsymbol{\lambda})$.
	
	We now know that we are calculating cycle types of elements in  $\g(\boldsymbol{\lambda})$ and check that the algorithm's output is as claimed. If one of $c_n, c_{n-1}, c_{\text{prime}}$ is zero by the end of the algorithm, then the algorithm returns all $g_{\deg}$ that were calculated and nothing is left to prove.
	
	Suppose that $c_n=c_{n-1}=c_p=1$ by the conclusion of the algorithm. When the conditions of Steps 2(g) or 2(h) are satisfied, we have found an $n$-cycle or an $(n-1)$-cycle respectively. When the condition of Step 2(i) is satisfied, we have found a permutation, $\sigma$, that is a product disjoint cycles, one of which is a $q$-cycle for some prime $q$ such that $\lceil \frac{n}{2} \rceil \leq q < n-2$ and no other is a $\lceil \frac{n}{2} \rceil$-cycle. Since $\sigma$ is a permutation of $[n]$, the representation of $\sigma$ as a product of disjoint cycles contains only the $q$-cycle and possibly some $a_i$-cycles where $a_i < \lceil \frac{n}{2} \rceil \leq q$. Thus, by Lemma \ref{lem: cycle elimination}, $\sigma^{(q-1)!}$ is a $q$-cycle in $\g(\lambda)$. Hence, by Corollary \ref{cor: full symmetric}, $\g(\lambda) = S_n$.
\end{proof}

We want to highlight that in the above algorithm, we reduce all equations modulo $p$ \emph{before} calculating an eliminant of our ideal. Reducing our equations modulo $p$ before calculating the eliminant rather than after leads to a reduced run time since the coefficients involved in computing an eliminant in this setting grow to the point that simple operations such as ``add two integers'' takes a noticeable amount of time. To highlight this, we ran the above algorithm using the Schubert problem $\F^2 \cdot \III^2 \cdot \I^6$ in $\gr(4,9)$ up until the point where we have factored the eliminant in $\mathbb{Z}/1009\mathbb{Z}[x]$ (ignoring Step $2$(e) in each iteration). When we reduce equations modulo $1009$ prior to computing the eliminant, we were able to calculate and factor $20$ eliminants in $81.79$ seconds. Doing the same calculations on the same problem with the same machine, but not reducing modulo $p$ until after calculating an eliminant, took $2063.61$ seconds. Reducing modulo $p$ prior to calculating the eliminant allows us to calculate cycle types about $25$ times faster in this case. In fact, this speed up seems to be even more significant as the number of solutions to a problem increases. For an example with $20$ solutions, reducing modulo $p$ first is about $444$ times faster. For an example with $30$ solutions, it is about $864$ times faster. When testing an example with $40$ solutions, the reduction gave us an extraordinary boost of speed---the algorithm ran over $96{,}000$ times faster! %

Of the $225$ Schubert problems for which Vakil's algorithm returned an inconclusive result, Algorithm \ref{alg:Frob} found that $77$ problems have the full symmetric group. In fact, we used Algorithm \ref{alg:Frob} on a large set of Schubert problems with relatively few solutions and found that every problem tested that was previously determined to be at least alternating has the full symmetric group.

\begin{theorem}
	All except $148$ reduced Schubert problems on $\gr(4,9)$ with no more than $300$ solutions have the full symmetric group as their Galois group. All Schubert problems on $\gr(4,9)$ with more than $300$ solutions are at least alternating.
\end{theorem}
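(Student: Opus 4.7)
The plan is to prove this computationally in two stages, matching the two sieves already developed in Chapter 2. First, I would enumerate the $31{,}806$ reduced Schubert problems on $\gr(4,9)$ with at least two solutions; this enumeration uses only Lemma \ref{lem: deficient} and the cohomology ring of $\gr(4,9)$ to count solutions via Littlewood--Richardson numbers, so it is essentially bookkeeping. To each problem I would run Vakil's checkerboard tournament of Theorem \ref{alg: Vakil}, marking as ``at least alternating'' every problem for which the resulting tree satisfies one of the three branching criteria. The output of this stage, reported in the text, is that all but $225$ of the problems are certified at least alternating; in particular every problem with more than $300$ solutions falls in the certified pile. (This is the key structural claim for the second sentence of the theorem: it is an empirical output of Vakil's algorithm on the enumerated list, and is what needs to be verified by the computer run.)

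Next, for each of the $225$ remaining problems I would execute Algorithm \ref{alg:Frob} with a suitably large $N$ and with a prime $p$ chosen to avoid the discriminant of the computed eliminant. By the correctness proof of Algorithm \ref{alg:Frob}, whenever the algorithm returns ``Galois group is $S_n$'' we have witnessed an $n$-cycle, an $(n-1)$-cycle, and an element whose disjoint-cycle decomposition contains a prime $q$-cycle with $\lceil n/2\rceil \le q < n-2$, so Lemma \ref{lem: cycle elimination} and Corollary \ref{cor: full symmetric} force the Galois group to be $S_n$. The reported output is that $77$ of the $225$ problems are certified by this step as having Galois group the full symmetric group, leaving exactly $148$ problems for which Algorithm \ref{alg:Frob} fails to produce the required three cycle types.

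For the remaining $148$ problems one must separately verify that the Galois group is \emph{not} at least alternating, and in particular not the full symmetric group; otherwise it could in principle be that Algorithm \ref{alg:Frob} simply failed to find the needed cycles within $N$ iterations. This is the step I expect to be the real obstacle: a purely sampling-based argument cannot rule out $S_n$, so for each of the $148$ problems one needs a geometric obstruction. The strategy, pursued in detail in the remainder of Chapter 3, is to exhibit explicit combinatorial/geometric structure on the solution set (for example, a block system preserved by the monodromy coming from a reduction to a simpler Schubert problem) that forces the Galois group into a proper imprimitive or intransitive subgroup of $S_n$, and to match this structural upper bound against the cycle types actually observed by Algorithm \ref{alg:Frob}, which give a lower bound on $\g(\boldsymbol{\lambda})$. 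When these two bounds coincide, the Galois group is pinned down exactly, and in particular one sees that it is not at least alternating, completing the proof that these $148$ problems are precisely the exceptional list.

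Finally, I would package the two halves: the second sentence of the theorem follows directly from the Vakil pass (no problem with more than $300$ solutions survived that filter), and the first sentence follows by combining the Vakil pass, the Frobenius pass, and the geometric identification of the $148$ exceptions. The honest content of the theorem is therefore the \emph{computer verification} that the two algorithms produce these exact counts on the enumerated list, together with the geometric arguments of the remaining sections identifying the $148$ survivors; everything else is a direct application of results already proved in Chapter 2.
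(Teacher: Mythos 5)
Your overall architecture matches the paper's: a Vakil pass over the full list of $31{,}806$ reduced problems using Theorem \ref{alg: Vakil}, a Frobenius pass via Algorithm \ref{alg:Frob}, and the geometric identification of the $148$ deficient problems in the remainder of Chapter 3. The second sentence of the theorem, and the identification of exactly $148$ exceptions among the $225$ survivors ($225 = 77 + 148$), are obtained exactly as you describe, and you are right that sampling alone cannot rule out $S_n$ for the $148$, so the structural arguments of the later sections are indispensable there.

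There is, however, a genuine gap in your argument for the \emph{first} sentence. You run Algorithm \ref{alg:Frob} only on the $225$ problems that survive the Vakil filter. For the reduced problems with at most $300$ solutions that Vakil's criterion \emph{does} certify --- more than $25{,}000$ of the $26{,}051$ --- your proposal therefore establishes only that the Galois group is at least alternating; it could still be the alternating group rather than the full symmetric group, and nothing in your two passes upgrades this. Since the first sentence asserts the \emph{full symmetric group} for every problem with at most $300$ solutions other than the $148$ exceptions, the cycle-type sampling must also be carried out on all of these Vakil-certified problems (for these it suffices to witness a single odd permutation in each, since ``at least alternating'' is already known). This is precisely the extra computation the paper reports: Algorithm \ref{alg:Frob} was run on the whole set of problems with relatively few solutions, and every problem previously certified at least alternating was found to have the full symmetric group. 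Without that step your argument proves only the weaker statement that all but $148$ of the problems with at most $300$ solutions are at least alternating.
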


There are $26{,}051$ reduced Schubert problems on $\gr(4,9)$ with no more than $300$ solutions. Therefore, after we determine the Galois group of the $148$ deficient problems, there are only $5{,}755$ Schubert problems on $\gr(4,9)$ for which the Galois group is not completely determined, and each of these is at least alternating. For each of the $148$ reduced Schubert problems, we will find the Galois group by solving auxiliary Schubert problems on smaller Grassmannians and then building the Galois group of the original problem from the Galois groups of the auxiliary problems. These solutions are built using only a few Schubert problems on $\gr(2,4)$ and $\gr(2,5)$. One of these problems was already solved in Example \ref{ex: Galois of 4 lines}, and $\g(\I^6)$ was first found in \cite{Byrnes}. For the remaining problem, we can find its Galois group using Algorithm \ref{alg:Frob}.

\begin{lemma}\label{lem: aux probs}
	In $\gr(2,4)$, $\g(\I^4)=S_2$. In $\gr(2,5)$, $\g(\I^4 \cdot \T)=S_3$ and $\g(\I^6)=S_5$.
\end{lemma}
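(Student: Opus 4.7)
The plan is to dispatch the three assertions in increasing order of novelty, since only one of them requires new work.

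First, $\g(\I^4) = S_2$ on $\gr(2,4)$ is already handled by Example \ref{ex: Galois of 4 lines}, which carried out an explicit monodromy argument (rotating the fourth line by $180^\circ$ about a chosen point to swap the two solutions). As the problem has $2$ solutions and Corollary \ref{cor: transitive} forces the Galois group to be a transitive subgroup of $S_2$, there is nothing further to do. Second, $\g(\I^6) = S_5$ on $\gr(2,5)$ is the classical problem of lines meeting six $2$-planes in $\mathbb{P}^4$, for which Byrnes established the Galois group is the full symmetric group; I would simply cite \cite{Byrnes} and quote the result.

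The remaining case $\g(\I^4 \cdot \T) = S_3$ on $\gr(2,5)$ is the one that needs fresh computation. This problem has $3$ solutions, and Corollary \ref{cor: transitive} restricts the Galois group to a transitive subgroup of $S_3$, so it is either the cyclic group of order $3$ or all of $S_3$. Consequently it suffices to produce a single element of cycle type $(2,1)$ in $\g(\I^4 \cdot \T)$. For this I would run the small-solution variant of Algorithm \ref{alg:Frob} flagged in the remark after the algorithm: specialize a random flag $\mathbf{F}$ defined over $\mathbb{Q}$, reduce the defining ideal modulo a prime $p$, compute an eliminant $g \in \mathbb{F}_p[x]$, and factor it. As soon as the factorization has shape $(2,1)$ with a squarefree eliminant, Theorem \ref{thm: Dedekind} yields a Frobenius element which is a transposition, and the two cases collapse to $\g(\I^4 \cdot \T) = S_3$.

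The only potential obstacle is ensuring the specialization is generic enough for Dedekind's theorem to apply and produces the desired cycle type, but this is a routine computational matter rather than a theoretical one: Corollary 1.6 of \cite{Vakil_Induction} guarantees a positive density of admissible primes over any finite field, and in a group as small as $S_3$ a transposition appears almost immediately in practice. Thus the lemma is established by combining the existing references for the first and third assertions with a short Frobenius computation for the middle one.
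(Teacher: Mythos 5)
Your proposal matches the paper's own justification almost exactly: the paper likewise obtains $\g(\I^4)=S_2$ from Example \ref{ex: Galois of 4 lines}, cites \cite{Byrnes} for $\g(\I^6)=S_5$, and determines $\g(\I^4\cdot\T)=S_3$ by sampling Frobenius elements via Algorithm \ref{alg:Frob}. Your added observation that a transitive subgroup of $S_3$ containing a transposition must be all of $S_3$ is a correct (and slightly more explicit) account of why one sampled cycle type of shape $(2,1)$ suffices.
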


\section{Finding the Galois group via auxiliary problems}

We will find that each of the $148$ deficient problems has one of four different Galois groups. Each of these four groups is imprimitive and may be described using a special kind of semidirect product. This construction is studied more carefully in Chapter 7 of \cite{Rotman}.

\begin{definition}
	Let $D$ and $Q$ be groups and $X$ a finite set on which $Q$ acts. Let $K= \prod_{x\in X} D_x$ where $D_x \cong D$ for all $x \in X$. Then the \emph{wreath product} of $D$ by $Q$, denoted $D \wr_X Q$, is $K \rtimes Q$ where $Q$ acts on $K$ by $q \cdot (d_x) = (d_{qx})$ for $q \in Q$ and $(d_x) \in \prod_{x \in X} D_x$.
\end{definition}

If $D$ is a permutation group acting on the set $Y$, $D \wr_X Q$ may be thought of as $Q$ acting on $\lvert X \rvert$ partitions where each partition is a copy of $Y$ being acted upon by a copy of $D$. Thus, if $D$ and $Q$ are both finite, then $\lvert D \wr_X Q \rvert = \lvert D \rvert^{\lvert X \rvert} \lvert Q \rvert$. The groups that we will look at are all wreath products of symmetric groups. With this in mind, we abbreviate the notation by setting $S_n \wr S_m := S_n \wr_{[m]} S_m$. In the following examples, we will write elements of $S_n \wr S_m$ in the form $(\sigma_1,\dots,\sigma_m;\tau)$ where the $\sigma_i \in S_n$ and $\tau \in S_m$. Using this notation, we have $(\sigma_1,\dots,\sigma_m;\tau)(\mu_1,\dots,\mu_m;\upsilon)=(\sigma_1\mu_{\tau(1)},\dots,\sigma_m\mu_{\tau(m)};\tau\upsilon)$.

\begin{example}
	We consider $S_3 \wr S_2$, a permutation group acting on $[6]$. The order of this group is $(3!)^22=72$. The element $(\tau,\mu;\sigma)$ acts on $[6]$ as follows: $\tau$ permutes the elements $\{1,2,3\} \subset [6]$ while $\mu$ permutes $\{4,5,6\} \subset [6]$, then $\sigma$ permutes the parts of the partition $\{\{1,2,3\},\{4,5,6\}\}$. In the following example, we let $\sigma\in S_2$ be the nontrivial permutation, $\tau \in S_3$ be the permutation that exchanges the first and third element, and $\mu \in S_3$ be the permutation that exchanges the second and third element. We now see how $(\tau,\mu;\sigma)$ acts on the ordered sequence $(1,2,3,4,5,6)$.
	\begin{align*}
	 &(\tau,\mu; \sigma) \cdot (1,2,3,4,5,6\big) \\
	 &=\sigma \cdot \big( \tau\cdot(1,2,3), \mu\cdot(4,5,6) \big) \\
	 &=\big(\mu\cdot(4,5,6), \tau\cdot(1,2,3)\big) \\
	 &= (4,6,5,3,2,1)
	\end{align*}
\end{example}

\begin{example}
	We consider $S_2 \wr S_2$. This group has order $2^22=8$. Let $e$ be the identity and $\sigma$ the nontrivial permutation in $S_2$. Then $S_2 \wr S_2$ has an element of order two,
	\[
	(e,e;\sigma)(e,e;\sigma)
	=(e^2, e^2; \sigma^2)
	=(e, e; e)
	\]
	and an element of order four,
	\begin{align*}
	\big((e,\sigma;\sigma)(e,\sigma;\sigma)\big)^2
	& =(e \sigma, \sigma e; \sigma^2)^2 \\
	& =(\sigma, \sigma; e)^2 \\
	& =(\sigma^2, \sigma^2; e^2) 
	=(e,e;e),
	\end{align*}
	with the following relation
	\begin{align*}
	\big((e,e;\sigma)(e,\sigma;\sigma)\big)^2
	& = (e\sigma, e^2; \sigma^2)^2 \\
	& = (\sigma, e; e)^2 \\
	& = (\sigma^2, e^2; e^2)
	=(e,e;e).
	\end{align*}
	Putting these facts together, we may write $S_2 \wr S_2$ in the form $\langle s,r \mid r^4=s^2=(sr)^2=e_\wr \rangle$ where $e_\wr$ is the identity element $(e,e;e)$. Thus, $S_2 \wr S_2 \cong D_4$, the group of symmetries of the square.
	
	We can further see this relation in Figure \ref{fig: S2 wr S2 is D4}. The action of $D_4$ on the vertices of a square may permute the red vertices amongst themselves, permute the blue vertices amongst themselves, or make the set of red vertices switch places with the set of blue vertices. In this manner, we see $D_4$ acting on the four vertices and their red-blue partition in the same way as $S_2 \wr S_2$.
\end{example}

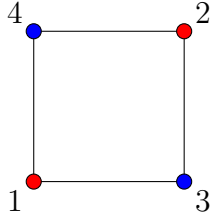
\begin{figure}
	\begin{center}
		\begin{tikzpicture}
		\draw (0,0) rectangle (2,2);
		\draw[fill=red]  (0,0) circle (0.1)
						 (2,2) circle (0.1);
		\draw[fill=blue] (2,0) circle (0.1)
						 (0,2) circle (0.1);
		\node at (-.25,-.25) {$1$};
		\node at (2.25, 2.25) {$2$};
		\node at (-.25, 2.25) {$4$};
		\node at (2.25,-.25) {$3$};
		\end{tikzpicture}
	\end{center}
	\caption{The action of $D_4$ on the vertices of the square preserves the red-blue partition.}\label{fig: S2 wr S2 is D4}
\end{figure}

Each of the remaining $148$ Schubert problems has a geometric structure that allows us to build an auxiliary subproblem. This auxiliary subproblem is itself a Schubert problem on a smaller Grassmannian, and every solution to the original Schubert problem contains a subspace that is a solution to the auxiliary problem. Moreover, for any solution to this subproblem, we are able to find another Schubert problem in the original space modulo the space of the subproblem. These two problems fit together in such a way that every solution to the original Schubert problem satisfies these two subproblems. Furthermore, for every pair of solutions to the two subproblems, we may find a solution to the original problem containing them. This geometric structure will force the Galois group of the whole problem to be a subgroup of the wreath product of the Galois groups of the subproblems.

To properly define these subproblems, we will need a special operation on partitions. Given partitions $\lambda,\mu$, we define $\overline{\lambda-\mu}$ to be coordinate-wise subtraction of $\mu$ from $\lambda$ followed by rearranging the numbers to keep the sequence non-increasing if necessary. In the case $\overline{\lambda-\lambda}$, we get the trivial condition. When we speak of a condition $\lambda$ on $\gr(k_1,m_1)$ as a condition on $\gr(k_2,m_2)$, we mean the condition in $\gr(k_2,m_2)$ whose Young diagram is the same as the Young diagram of $\lambda$ in $\gr(k_1,m_1)$.

\begin{example}
	Let $\lambda=(3,2,1,0)$ and $\mu=(3,0,0,0)$ be conditions on $\gr(4,9)$. As Young diagrams, we have $\lambda= \ThTI$ and $\mu=\III$. Then we have 
	\[
	\overline{\lambda-\mu}=\overline{(3,2,1,0)-(3,0,0,0)}= \overline{(0,2,1,0)}=(2,1,0,0).
	\]
	The equivalent statement with Young diagrams is $\overline{\ThTI-\III}=\TI$.
	
	Viewing $\lambda$ as a condition on $\gr(3,8)$ would preserve the Young diagram $\lambda=\ThTI$, but would change the coordinate representation $\lambda=(3,2,1)$. Similarly, viewing $\overline{\lambda-\mu}$ as a condition on $\gr(2,5)$ would yield $\overline{\lambda-\mu}=(2,1)=\TI$.
\end{example}

In the following, we will speak of the codimension of some space in several different ambient spaces. In order to avoid confusion, we will use the notation $\codim_V H$ to denote the codimension of $H$ with respect to $V$. The first of the eight relations we will show is when the Schubert problem contains four conditions: two containing each of the conditions in Figure \ref{fig: relation 1}.

\begin{figure}[h]
\centering
\begin{minipage}{0.4\textwidth}
	\begin{tabular}{|c|c|c|c|}
		\hline
		\cellcolor{blue!25} m-k & m-k-1& \dots & 1 \\
		\hline
		\cellcolor{blue!25} m-k+1 & m-k & \dots & 2 \\
		\hline
		\cellcolor{blue!25} \vdots & \vdots & $\ddots$ & \vdots \\
		\hline
		\cellcolor{blue!25} m-2 & m-3 & \dots & k-1 \\
		\hline
		m-1 & m-2 & \dots  & k \\
		\hline
	\end{tabular}
\end{minipage}
\begin{minipage}{0.05\textwidth}
	~
\end{minipage}
\begin{minipage}{0.4\textwidth}
	\begin{tabular}{|c|c|c|c|}
		\hline
		\cellcolor{blue!25} m-k & \cellcolor{blue!25} \dots & \cellcolor{blue!25} 2 & 1 \\
		\hline
		m-k+1 & \dots & 3 & 2 \\
		\hline
		\vdots & $\ddots$ & \vdots & \vdots \\
		\hline
		m-1 & \dots & k+1 & k \\
		\hline
	\end{tabular}
\end{minipage}
\caption{The conditions required by the relation in Theorem \ref{thm: relation 1}.}\label{fig: relation 1}
\end{figure}

\begin{theorem}\label{thm: relation 1}
	Let $\boldsymbol{\lambda}=(\lambda^1,\dots,\lambda^r)$ be a reduced Schubert problem on $\gr(k,m)$ with $3 \leq k \leq m-3$. Let $\mu^1=(1,1,\dots,1,0)$ and $\mu^2=(m-k-1,0,0,\dots,0)$ be conditions on $\gr(k,m)$ and suppose $\mu^1 \subset \lambda^1,\lambda^2$ and $\mu^2 \subset \lambda^3,\lambda^4$. Then, for the Schubert problem
	\[ 
	\boldsymbol{\widehat{\lambda}}=(\overline{\lambda^1-\mu^1},\overline{\lambda^2-\mu^1},\overline{\lambda^3-\mu^2},\overline{\lambda^4-\mu^2},\lambda^5,\lambda^6,\dots,\lambda^r) 
	\]
	in $\gr(k-2,m-4)$, we have
	$\g(\boldsymbol{\lambda}) \leq \g\big(\boldsymbol{\widehat{\lambda}}\big) \wr S_2$.
\end{theorem}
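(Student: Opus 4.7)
The plan is to decompose every solution $H$ of $\boldsymbol{\lambda}$ as a direct sum $H = K \oplus P$, where $K$ is a $(k-2)$-plane in a fixed $(m-4)$-dimensional subspace $U$ and $P$ is a $2$-plane in a fixed $4$-dimensional subspace $V'$; to show that $K$ solves $\boldsymbol{\widehat{\lambda}}$ and $P$ solves a fixed $\gr(2,4)$ problem with exactly two solutions; and to deduce the wreath product embedding from this fibration of the solution set.

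\textbf{Decomposition.} For generic flags $\mathbf{F}$, let $U := F^1_{m-2} \cap F^2_{m-2}$ and $V' := F^3_2 + F^4_2$; by genericity $\dim U = m-4$, $\dim V' = 4$, and $\mathbb{C}^m = U \oplus V'$. The essential condition of $\mu^1 \subset \lambda^i$ forces $\dim(H \cap F^i_{m-2}) \geq k-1$ for $i=1,2$, so $K := H \cap U$ has dimension $k-2$. The $\mu^2$ conditions give lines $L_j := H \cap F^j_2 \subset F^j_2$ for $j=3,4$, whose span $P := L_3 + L_4$ is a $2$-plane contained in $V'$. Since $K \cap P \subset U \cap V'$ is trivial generically, one obtains $H = K \oplus P$.

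\textbf{Residual problems.} Because $K \subset F^i_{m-2}$ contributes only $k-2$ to the intersection $H \cap F^i_{m-2}$, which must have dimension $\geq k-1$, the remainder forces $\dim(P \cap (F^i_{m-2} \cap V')) \geq 1$ for $i=1,2$; together with $\dim(P \cap F^j_2) \geq 1$ for $j=3,4$, these are four $\I$-conditions on $P$ in $\gr(2, V') \cong \gr(2, 4)$. By Lemma \ref{lem: aux probs}, the problem $\I^4$ on $\gr(2,4)$ has exactly $2$ solutions with Galois group $S_2$. The remaining parts of the original conditions, namely $\overline{\lambda^i - \mu^j}$ for $i \leq 4$ and $\lambda^i$ for $i \geq 5$, become the conditions of $\boldsymbol{\widehat{\lambda}}$ on $K$ in $\gr(k-2, U)$ via flags induced on $U$ by intersection (with indices shifted by $\dim V' = 4$). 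The codimension count balances: the $(k-2)(m-k-2)$ from $\boldsymbol{\widehat{\lambda}}$ on $K$, plus $4$ from $\I^4$ on $P$, plus $2(m-4)$ enforcing the decomposition $H \in U \oplus V'$, sum to $k(m-k) = |\boldsymbol{\lambda}|$.

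\textbf{Bijection and wreath embedding.} The assignment $H \mapsto (K, P)$ is a bijection from $\Omega_{\boldsymbol{\lambda}}\mathbf{F}$ onto $\Omega_{\boldsymbol{\widehat{\lambda}}}\widehat{\mathbf{F}} \times \Omega_{\I^4}\mathbf{F}'$, so $|\Omega_{\boldsymbol{\lambda}}\mathbf{F}| = 2\hat{n}$ where $\hat{n} = |\Omega_{\boldsymbol{\widehat{\lambda}}}\widehat{\mathbf{F}}|$. Projecting $H \mapsto P$ partitions the $2\hat{n}$ solutions into two blocks of $\hat{n}$, one for each of the two solutions of $\I^4$; monodromy preserves this partition because $P$ is the invariant geometric datum $L_3 + L_4$. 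The induced action on the $2$-element base factors through $\g(\I^4) = S_2$, while the action on each fiber factors through $\g(\boldsymbol{\widehat{\lambda}})$. The standard embedding theorem for composed covers (an application of Theorem \ref{thm: Goursat}) then yields $\g(\boldsymbol{\lambda}) \leq \g(\boldsymbol{\widehat{\lambda}}) \wr S_2$.

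The main obstacle is the verification in the middle step that the conditions inherited by $K$ from the original $\lambda^i$ coincide exactly with $\boldsymbol{\widehat{\lambda}}$ on $\gr(k-2, m-4)$ under the appropriate induced flags. A priori, each $\lambda^i$ on $H$ could impose joint constraints on $K$ and $P$; one must show that under the genericity assumption, these decouple cleanly into the stated $\I^4$ on $P$ and $\boldsymbol{\widehat{\lambda}}$ on $K$, both for $i \leq 4$ (where the $\mu^j$-part goes to $P$ and the $\overline{\lambda^i - \mu^j}$-part to $K$) and for $i \geq 5$ (where the whole $\lambda^i$ becomes a condition on $K$ alone).
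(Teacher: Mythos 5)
Your setup is the same as the paper's: you take $W:=F^1_{m-2}\cap F^2_{m-2}$ and $V:=\langle F^3_2,F^4_2\rangle$, split $H=(H\cap W)\oplus(H\cap V)=K\oplus P$, and identify the auxiliary problem $\I^4$ on $\gr(2,V)$ with two solutions and Galois group $S_2$. The gap is exactly at the step you defer to the end, and it is not a routine verification: the residual conditions on $K$ are \emph{not} given by the flags $F^j_\bullet\cap W$ (indices shifted by $4$). For $j\geq 5$ and a generic flag, if $\dim(H\cap F^j_\ell)=d$ then $H\cap F^j_\ell$ sits in general position inside $H$, so $\dim(K\cap F^j_\ell)=\max\{0,d-2\}$; intersecting with $W$ discards precisely the part of the condition carried by the $P$-component of $H$. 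The flag element that does carry the condition is $\langle F^j_\ell,\,h_i\rangle\cap W$, where $h_i=P$ is the chosen solution of the auxiliary $\I^4$ --- and this flag depends on $h_i$. (Using $\langle F^j_\ell,V\rangle\cap W$ instead, to stay $h_i$-independent, raises the dimension of the flag element by $2$ and yields a strictly weaker condition, so the codimensions no longer add up to $\dim\gr(k-2,m-4)$.) Consequently your claimed bijection of $\Omega_{\boldsymbol{\lambda}}\mathbf{F}$ onto a product $\Omega_{\boldsymbol{\widehat{\lambda}}}\widehat{\mathbf{F}}\times\Omega_{\I^4}\mathbf{F}'$ with a single fixed instance $\widehat{\mathbf{F}}$ cannot hold: the two fibers over $h_1$ and $h_2$ are solution sets of two \emph{different} instances of $\boldsymbol{\widehat{\lambda}}$. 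If the fibers really were one fixed solution set, monodromy would act diagonally on them and you would prove $\g(\boldsymbol{\lambda})\leq\g(\boldsymbol{\widehat{\lambda}})\times S_2$, which is false for these problems --- the paper exhibits Galois groups such as $S_2\wr S_2\cong D_4$, which is not a subgroup of $S_2\times S_2$. The $P$-dependence of the induced flags is the very reason the bound is a wreath product rather than a direct product.

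The repair is the main content of the paper's proof: for each solution $h_i$ of the auxiliary problem one constructs the flags $h_iF^j_\ell:=\langle F^j_\ell,h_i\rangle\cap W$ and checks case by case that $\widehat{H}$ satisfies the instance of $\boldsymbol{\widehat{\lambda}}$ they define --- for $j=1,2$ the first column of $\lambda^j$ becomes trivial because $W\subset F^j_{m-2}$, for $j=3,4$ the first row becomes trivial because $F^j_2\cap W=0$, and for $j\geq 5$ the condition $\lambda^j$ carries over unchanged. The solution set then fibers over $\{h_1,h_2\}$ with fibers the solution sets of these two instances, and the wreath-product bound follows from the standard monodromy argument for composed covers (Goursat's lemma, which you invoke, is not really the relevant tool here). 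Your identification of the $\I^4$ subproblem and the overall codimension bookkeeping are fine.
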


\begin{proof}
	Let $F^1_\bullet,\dots,F^r_\bullet$ be defining flags for an instance of $\lambda^1,\dots,\lambda^r$ in general position, and let $H \in \Omega_{\boldsymbol{\lambda}}\mathbf{F}$. We let $M_i:= F^i_{m-2}$ for $i=1,2$ and $L_j:= F^j_2$ for $j=3,4$. We begin by showing there is an auxiliary problem $\I^4$ on a $\gr(2,4)$ as a subproblem of $\boldsymbol{\lambda}$. 
	
	Since $\mu^1 \subset \lambda^1,\lambda^2$, $\dim(H \cap M_i) = k-1$. Similarly, since $\mu^2 \subset \lambda^3,\lambda^4$, $\dim(H \cap L_j)= 1$. Let $V=\langle L_3,L_4 \rangle \cong \mathbb{C}^4$ and $H^\prime= H \cap V$. Then we have $H^\prime \in \gr(2,V)$. By construction, $\dim(H^\prime \cap L_j) = 1$. Moreover, since $\codim_H H^\prime = 2$ and $\codim_H M_i \cap H = 1$, we also have $\dim\big(H^\prime \cap (M_i \cap V)\big) = 1$. Thus, the $L_j$ and $M_i \cap V$ each give the condition $\I$ on $\gr(2,V)$. This gives us the Schubert problem $\I^4$ on a $\gr(2,4)$, which has two solutions, $h_1,h_2$, and Galois group $S_2$ by Lemma \ref{lem: aux probs}. 
	
	Let $h_i \in \{h_1,h_2\}$ such that $H^\prime=h_i$. We now show that $H$ may be written as $\langle h_i, h_{i,j} \rangle$ where $h_{i,j}$ satisfies an instance of $\boldsymbol{\widehat{\lambda}}$ on a $\gr(k-2,m-4)$. We consider the space $W=M_1 \cap M_2$. Since $\codim_{\mathbb{C}^m} M_i=2$, we have $\codim_{\mathbb{C}^m} W=4$. Moreover, since $\dim(H \cap M_i) = k-1$, we have $\codim_H \widehat{H} = 2$ where $\widehat{H}=H \cap W$. Therefore, we have $\widehat{H} \in \gr(k-2,W)$. The $M_i$ and $L_j$ are all in general position, thus $V$ and $W$ are in direct sum.
	
	We now want to show that $\widehat{H}$ satisfies an instance of the Schubert problem $\boldsymbol{\widehat{\lambda}}$. Consider the conditions $\lambda^j$ for $j=5,6,\dots,r$, and let $h_iF^j_{\ell}:= \langle F^j_\ell,h_i \rangle \cap W$. Since the $F^j_\ell$ are in general position with $W$, $\dim(F^j_\ell \cap W) = \max\{0,\ell-2\}$. Thus, $\dim(h_iF^j_\ell \cap W)=\ell$. Furthermore, since $\dim(\widehat{H} \cap h_iF^j_\ell) = \dim(H \cap \langle F^j_\ell, h_i \rangle)-2 = \dim(H \cap F^j_\ell)$, we see that $\widehat{H} \in \Omega_{\lambda^j} h_iF^j_\bullet$.

	We now consider the $\lambda^j$ for $j=3,4$. Since $L_j$ and $W$ intersect trivially, the first row of $\lambda^j$ contributes nothing to conditions on $\widehat{H} \in \gr(k-2,W)$.  Furthermore, $L_j \cap H$ is a line of $h_i$, so for $h_iF^j_{\ell-3}:= \langle F^j_\ell, h_i \rangle \cap W$, we have $\dim(h_iF^j_{\ell-3})=\ell-3$ and $\dim(\widehat{H} \cap h_iF^j_{\ell-3})=d-1$. This yields the condition $\overline{\lambda^j-\mu^2}$ on $\gr(k-2,W)$.
	
	Finally, we consider the $\lambda^j$ for $j=1,2$. In this case, $W \subset M_j$, therefore the first column of $\lambda^j$ contributes the trivial condition and may be ignored. If there is some $d<k-1$ such that $\dim(H \cap F^j_\ell) = d$ for some $\ell < m-k+d-1$, then we consider $h_iF^j_{\ell-1}:=\langle h_i, F^j_\ell \rangle \cap W$. Following the same logic as above, we see $\dim(h_iF^j_{\ell-1} \cap \widehat{H}) = d$. Moreover, since $M_1$ and $M_2$ are in general position, $\dim(F^j_\ell \cap W) = \ell-2$ 
	and $\dim(F^j_\ell \cap \widehat{H})=d-1$. Since $\dim(h_iF^j_{\ell-1} \cap \widehat{H}) = d$, we must have $\dim(h_iF^j_{\ell-1}) \geq \ell-1$. Moreover, if $\dim(h_iF^j_{\ell-1}) \geq \dim(F^j_\ell \cap W)+2$, then $h_i$ and $W$ would intersect nontrivially. Since this would contradict $V$ and $W$ being in direct sum, we have $\dim(h_iF^j_{\ell-1}) = \ell-1$. This yields the condition $\overline{\lambda^j-\mu^1}$ on $\gr(k-2,W)$.
	
	We have now shown that $H=\langle h_i, h_{i,j} \rangle$ where $h_i$ is a solution to an instance of $\I^4$ on $\gr(2,V)$ and $h_{i,j}$ is a solution to an instance of $\boldsymbol{\widehat{\lambda}}$ on $\gr(k-2,W)$. Therefore, any permutation in $\g(\boldsymbol{\lambda})$ may be written in the form $(\sigma_1,\sigma_2;\tau)$ with $\sigma_i \in \g\big(\boldsymbol{\hat{\lambda}}\big)$ and $\tau \in \g(\I^4)$ where $(\sigma_1,\sigma_2;\tau) \cdot \langle h_i, h_{i,j} \rangle = \langle h_{\tau(i)}, h_{\tau(i),\sigma_i(j)} \rangle$. Thus, $\g(\boldsymbol{\lambda})$ is a subgroup of $\g\big(\boldsymbol{\widehat{\lambda}}\big) \wr S_2$.
\end{proof}

The second relation involves one condition containing the condition on the right of Figure \ref{fig: relation 2} and three conditions containing the condition on the left of the figure.

\begin{figure}[h]
	\centering
	\begin{minipage}{0.4\textwidth}
		\begin{tabular}{|c|c|c|c|c|}
			\hline
			\cellcolor{blue!25} m-k & m-k-1& \dots & 2 & 1 \\
			\hline
			\cellcolor{blue!25} m-k+1 & m-k & \dots & 3 & 2 \\
			\hline
			\cellcolor{blue!25} \vdots & \vdots & $\ddots$ & \vdots & \vdots \\
			\hline
			\cellcolor{blue!25} m-2 & m-3 & \dots & k & k-1 \\
			\hline
			m-1 & m-2 & \dots & k+1 & k \\
			\hline
		\end{tabular}
	\end{minipage}
	\begin{minipage}{0.05\textwidth}
		~
	\end{minipage}
	\begin{minipage}{0.4\textwidth}
		\begin{tabular}{|c|c|c|c|c|c|}
			\hline
			\cellcolor{blue!25} m-k & \cellcolor{blue!25} m-k-1& \cellcolor{blue!25} \dots & \cellcolor{blue!25} 3 & \cellcolor{blue!25} 2 & 1 \\
			\hline
			\cellcolor{blue!25}m-k+1 & \cellcolor{blue!25}m-k & \cellcolor{blue!25}\dots & \cellcolor{blue!25}4 & 3 & 2 \\
			\hline
			\vdots & \vdots & $\ddots$ & \vdots & \vdots & \vdots \\
			\hline
			m-2 & m-3 & \dots & k+1 & k & k-1 \\
			\hline
			m-1 & m-2 & \dots & k+2 & k+1 & k \\
			\hline
		\end{tabular}
	\end{minipage}
	\caption{The conditions required by the relation in Theorem \ref{thm: relation 2}.}\label{fig: relation 2}
\end{figure}

\begin{theorem}\label{thm: relation 2}
		Let $\boldsymbol{\lambda}=(\lambda^1,\dots,\lambda^r)$ be a reduced Schubert problem on $\gr(k,m)$ with $3 \leq k \leq m-3$. Let $\mu^1=(1,1,\dots,1,0)$ and $\mu^2=(m-k-1,m-k-2,0,\dots,0)$ be conditions on $\gr(k,m)$ and suppose $\mu^1 \subset \lambda^1,\lambda^2, \lambda^3$ and $\mu^2 \subset \lambda^4$. Then, for the Schubert problem
		\[ 
		\boldsymbol{\widehat{\lambda}}=(\overline{\lambda^1-\mu^1},\overline{\lambda^2-\mu^1}, \overline{\lambda^3-\mu^1},\overline{\lambda^4-\mu^2},\lambda^5,\lambda^6,\dots,\lambda^r) 
		\]
		in $\gr(k-2,m-5)$, we have
		$\g(\boldsymbol{\lambda}) \leq \g\big(\boldsymbol{\widehat{\lambda}}\big) \wr S_2$.
\end{theorem}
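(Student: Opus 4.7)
The plan is to mimic the proof of Theorem~\ref{thm: relation 1} closely, with a slightly more delicate ambient space for the residual. I would pick general defining flags $F^1_\bullet,\dots,F^r_\bullet$, take $H \in \Omega_{\boldsymbol{\lambda}}\mathbf{F}$, and extract from the core conditions the auxiliary data: $M_i := F^i_{m-2}$ for $i=1,2,3$ (forcing $\dim(H \cap M_i)=k-1$) and $L := F^4_2 \subset V := F^4_4$ (forcing $\dim(H\cap L)=1$ and $\dim(H\cap V)=2$). Setting $H':=H\cap V \in \gr(2,V)\cong\gr(2,4)$, a standard codimension count gives $\dim(H'\cap L)=1$ and $\dim(H'\cap(M_i\cap V))=1$ for each $i$, so $H'$ satisfies the auxiliary problem $\I^4$ on $\gr(2,4)$, which has two solutions $h_1,h_2$ and Galois group $S_2$ by Lemma~\ref{lem: aux probs}.

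To construct an $(m-5)$-dimensional residual ambient, I would fix $h_i=H'$, let $\ell_i := h_i \cap L$ (a line), and set
\[
W_i \;:=\; (M_1 \cap M_2 \cap M_3) + \ell_i.
\]
General position gives $\dim(M_1\cap M_2\cap M_3)=m-6$ and $L\cap M_j=0$, so $\ell_i\cap(M_1\cap M_2\cap M_3)=0$ and $\dim W_i=m-5$. The three $\mu^1$ conditions force $\dim(H\cap M_1\cap M_2\cap M_3)=k-3$, and since $\ell_i\subset H$, the intersection $\widehat{H} := H\cap W_i = (H\cap M_1\cap M_2\cap M_3)+\ell_i$ has dimension $k-2$, placing $\widehat{H} \in \gr(k-2,W_i)\cong \gr(k-2,m-5)$.

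Next I would verify $\widehat{H} \in \Omega_{\boldsymbol{\widehat{\lambda}}}\widehat{\mathbf{F}}$ by examining each block of conditions. For $j \geq 5$, the residual flags $\widehat{F}^j_\ell := \langle F^j_\ell, h_i\rangle \cap W_i$ should transfer $\lambda^j$ verbatim, exactly as in Theorem~\ref{thm: relation 1}. For $j\in\{1,2,3\}$, the $\mu^1$ column of $\lambda^j$ is forced automatically: one checks that $M_j\cap W_i = M_1\cap M_2\cap M_3$ is a hyperplane of $W_i$ and $\widehat{H}\cap(M_j\cap W_i)$ has dimension exactly $k-3=\dim\widehat{H}-1$, leaving only the residual partition $\overline{\lambda^j-\mu^1}$ as an explicit condition. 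For $j=4$, the hook $\mu^2$ is absorbed by the incidences $\ell_i\subset L\subset V$ together with $V\cap W_i=\ell_i$, leaving $\overline{\lambda^4-\mu^2}$. Since any pair $(h_i,\widehat{H})$ reconstructs $H=h_i+\widehat{H}$, every element of $\g(\boldsymbol{\lambda})$ descends to a permutation of the form $(\sigma_1,\sigma_2;\tau)$ with $\sigma_j\in\g(\boldsymbol{\widehat{\lambda}})$ and $\tau\in\g(\I^4)=S_2$, giving $\g(\boldsymbol{\lambda})\leq\g(\boldsymbol{\widehat{\lambda}})\wr S_2$.

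The hard part will be the bookkeeping for the residual conditions in the presence of the overlap $V\cap W_i=\ell_i$. In Theorem~\ref{thm: relation 1} the complete direct sum $V\oplus W=\mathbb{C}^m$ made every dimension count essentially tautological; here $W_i$ is neither contained in any single $M_j$ nor disjoint from $V$, so the two cases $j\in\{1,2,3\}$ and $j=4$ each require an independent dimension calculation to show that $\langle F^j_\ell,h_i\rangle\cap W_i$ has the correct dimension shift and that the intersection with $\widehat{H}$ realizes precisely $\overline{\lambda^j-\mu^\bullet}$, with the $\mu^\bullet$ part already absorbed by the forced incidences.
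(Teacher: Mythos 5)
Your auxiliary problem is set up exactly as in the paper (your $V=F^4_4$, $L=F^4_2$ are the paper's $L$, $L'$), but your residual space $W_i$ breaks the argument at its most important point: the reconstruction of $H$ from the pair $(h_i,\widehat H)$. You intersect $H$ with all three of $M_1,M_2,M_3$. Since each $\mu^1\subset\lambda^j$ only forces $\dim(H\cap M_j)=k-1$, i.e.\ each $H\cap M_j$ is a hyperplane of $H$, for general flags $\dim(H\cap M_1\cap M_2\cap M_3)=k-3$. Your $\widehat H=(H\cap M_1\cap M_2\cap M_3)+\ell_i$ then has dimension $k-2$ as you say, but it meets $h_i$ in the line $\ell_i$, so
\[
\dim\bigl(h_i+\widehat H\bigr)=2+(k-2)-1=k-1<k .
\]
Thus $H\neq h_i+\widehat H$: the pair $(h_i,\widehat H)$ pins down only a $(k-1)$-plane inside $H$, a whole $\mathbb{P}^{m-k}$ of $k$-planes contain it, and the map from solutions of $\boldsymbol{\lambda}$ to pairs (auxiliary solution, residual solution) is not visibly injective. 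Without that bijection onto blocks the containment $\g(\boldsymbol{\lambda})\leq\g(\boldsymbol{\widehat\lambda})\wr S_2$ does not follow. The over-cutting by the third $M_j$ is the missing dimension; adding back $\ell_i$ does not recover it because $\ell_i$ already lies in $h_i$.

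The paper avoids this by using only two of the three hyperplane conditions: it sets $V:=M_1\cap M_2$, takes $\widehat H:=H\cap V$ of dimension $k-2$ inside a $\gr(k-2,m-4)$, and there $h_i\oplus\widehat H=H$ honestly (since $F^4_4\cap M_1\cap M_2=0$ in general position). The third condition $\lambda^3$ is then transferred \emph{whole} to the residual problem, where it still has a full first column ($\lambda^3_{k-2}\geq 1$), and Lemma \ref{lem: deficient} (the full-column relation) is invoked to drop from $\gr(k-2,m-4)$ to $\gr(k-2,m-5)$ and replace $\lambda^3$ by $\overline{\lambda^3-\mu^1}$. If you want to keep your construction, you would have to replace $\widehat H$ by something of dimension $k-2$ that is complementary to $h_i$ inside $H$ (e.g.\ $H\cap M_1\cap M_2$ itself) and then deal with the third condition by the reduction lemma as the paper does; as written, the step ``any pair $(h_i,\widehat H)$ reconstructs $H$'' is false.
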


\begin{proof}
	Let $F^1_\bullet,\dots,F^r_\bullet$ be defining flags for an instance of $\lambda^1,\dots,\lambda^r$ in general position, and let $H \in \Omega_{\boldsymbol{\lambda}}\mathbf{F}$. We let $M_i:= F^i_{m-2}$ for $i=1,2,3$, $L:= F^4_4$, and $L^\prime:=F^4_2$. We begin by showing that there is an auxiliary problem $\I^4$ on a $\gr(2,4)$ as a subproblem of $\boldsymbol{\lambda}$.
	
	We consider the space $\gr(2,L)$ and let $H^\prime=H \cap L \in \gr(2,L)$. Since $L^\prime \subset L$, $\dim(H^\prime \cap L^\prime) = 1$. Furthermore, since the $M_i$ and $L$ are in general position, $\dim(M_i \cap L)=2$ and $\dim\big(H^\prime \cap (M_i \cap L)\big) = 1$. Thus, $L^\prime$ and the $M_i \cap L$ each give the condition $\I$ on $\gr(2,L)$. This gives us the Schubert problem $\I^4$ on a $\gr(2,4)$, which has two solutions, $h_1,h_2$, and Galois group $S_2$ by Lemma \ref{lem: aux probs}.
	
	Let $h_i \in \{h_1,h_2\}$ such that $H^\prime=h_i$. We now show that $H$ may be written as $\langle h_i, h_{i,j} \rangle$ where $h_{i,j}$ satisfies an instance of $\boldsymbol{\widehat{\lambda}}$ on a $\gr(k-2,m-5)$. Let $V:= M_1 \cap M_2$. Since $\dim(H \cap M_i) = k-1$, we consider $\widehat{H}=H\cap V \in \gr(k-2,V)$, which is a $\gr(k-2,m-4)$. The $M_i$ and $L$ are all in general position, thus $V$ and $L$ are in direct sum.
	
	We now want to show that $\widehat{H}$ satisfies an instance of the Schubert problem $\boldsymbol{\widehat{\lambda}}$ on $\gr(k-2,V)$. Consider the conditions $\lambda^j$ for $j=5,6,\dots,r$, and let $h_iF^j_{\ell}:= \langle F^j_\ell,h_i \rangle \cap V$. Since the $F^j_\ell$ are in general position with $V$, $\dim(F^j_\ell \cap V) = \max\{0,\ell-2\}$. Thus, $\dim(h_iF^j_\ell \cap V)=\ell$. Furthermore, since $\dim(\widehat{H} \cap h_iF^j_\ell) = \dim(H \cap \langle F^j_\ell, h_i \rangle)-2 = \dim(H \cap F^j_\ell)$, we see that $\widehat{H} \in \Omega_{\lambda^j} h_iF^j_\bullet$.
	
	Next, we consider $\lambda^4$. Since $V$ and $L$ have trivial intersection, $\dim(F^4_\ell \cap V)=0$ for $\ell \leq 4$. Moreover, since $F^4_4 \cap H=H^\prime$, this implies that $\dim(F^4_\ell \cap \widehat{H})=\max\{0,\dim(F^4_\ell \cap H)-2\}$ for all $\ell$. This gives us the condition $\overline{\lambda^4-\mu^2}$ on the flag $F^4_5 \cap V \subset \dots \subset F^4_m \cap V$.
	
	We now consider the $\lambda^j$ for $j=1,2$. In this case, $V \subset M_j$, therefore the first column of $\lambda^j$ contributes the trivial condition and may be ignored. If there is some $d<k-1$ such that $\dim(H \cap F^j_\ell) = d$ for some $\ell < m-k+d-1$, then we consider $h_iF^j_{\ell-1}:=\langle h_i, F^j_\ell \rangle \cap V$. We have 
	\[
	\dim(h_iF^j_{\ell-1} \cap \widehat{H}) = \dim(\langle h_i, F^j_\ell \rangle \cap H) - 2 = d.
	\] 
	Moreover, since $M_1$ and $M_2$ are in general position, $\dim(F^j_\ell \cap V) = \ell-2$ 
	and $\dim(F^j_\ell \cap \widehat{H})=d-1$. Since $\dim(h_iF^j_{\ell-1} \cap \widehat{H}) = d$, we must have $\dim(h_iF^j_{\ell-1}) \geq \ell-1$. Moreover, if $\dim(h_iF^j_{\ell-1}) \geq \dim(F^j_\ell \cap V)+2$, then $h_i$ and $V$ would intersect nontrivially. Since this would contradict $V$ and $L$ being in direct sum, we have $\dim(h_iF^j_{\ell-1}) = \ell-1$. This yields the condition $\overline{\lambda^j-\mu^1}$ on $\gr(k-2,V)$.

	Finally, we consider $\lambda^3$. Let $\dim(H \cap F^3_\ell) \geq d_\ell$. For each $h_i$, let $h_iF^3_\ell= \langle h_i, F^3_{\ell+2} \rangle$. Since $\dim(H \cap h_iF^3_\ell) \geq d_{\ell+2}+2$ and $\codim_H \widehat{H}=2$, we must have $\dim(\widehat{H} \cap h_iF^3_\ell) \geq d_{\ell+2}$. Furthermore, since the flags are all in general position with respect to one another, we have $\dim(h_iF^3_\ell)=\ell$. Thus, $\widehat{H}$ satisfies the condition $(\lambda^3_1,\dots,\lambda^3_{k-2})$ on the flag $h_iF^3_\bullet$.
	
	We now have $\widehat{H}$ satisfies an instance of $(\overline{\lambda^1-\mu^1},\overline{\lambda^2-\mu^1}, \lambda^3,\overline{\lambda^4-\mu^2},\lambda^5,\lambda^6,\dots,\lambda^r)$ on a $\gr(2,m-4)$. However, since $\lambda^3_{k-2}=1$, by Lemma \ref{lem: deficient}, this reduces to an instance of the Schubert problem $\boldsymbol{\hat{\lambda}}$ on a $\gr(2,m-5)$.
	
	We have now shown that $H=\langle h_i, h_{i,j} \rangle$ where $h_i$ is a solution to an instance of $\I^4$ on $\gr(2,L)$ and $h_{i,j}$ is a solution to an instance of $\boldsymbol{\widehat{\lambda}}$ on $\gr(k-2,V)$. Therefore, any permutation in $\g(\boldsymbol{\lambda})$ may be written in the form $(\sigma_1,\sigma_2;\tau)$ with $\sigma_i \in \g\big(\boldsymbol{\hat{\lambda}}\big)$ and $\tau \in \g(\I^4)$ where $(\sigma_1,\sigma_2;\tau) \cdot \langle h_i, h_{i,j} \rangle = \langle h_{\tau(i)}, h_{\tau(i),\sigma_i(j)} \rangle$. Thus, $\g(\boldsymbol{\lambda})$ is a subgroup of $\g\big(\boldsymbol{\widehat{\lambda}}\big) \wr S_2$.
\end{proof}

The third relation involves five conditions: one containing each of the conditions in the top row of Figure \ref{fig: relation 3} and three containing the condition in the bottom row.

\begin{figure}[h]
	\centering
	\begin{minipage}{0.4\textwidth}
		\begin{tabular}{|c|c|c|c|c|}
			\hline
			\cellcolor{blue!25} m-k &  \cellcolor{blue!25} \dots & \cellcolor{blue!25} 3 & 2 & 1 \\
			\hline
			 m-k+1 & \dots & 4 & 3 & 2 \\
			\hline
			\vdots & $\ddots$ & \vdots & \vdots & \vdots \\
			\hline
			m-1 & \dots & k+2 & k+1 & k \\
			\hline
		\end{tabular}
	\end{minipage}
	\begin{minipage}{0.05\textwidth}
		~
	\end{minipage}
	\begin{minipage}{0.4\textwidth}
		\begin{tabular}{|c|c|c|c|}
			\hline
			\cellcolor{blue!25} m-k & \cellcolor{blue!25} \dots & \cellcolor{blue!25} 2  & 1 \\
			\hline
			m-k+1 & \dots & 3 & 2 \\
			\hline
			\vdots & $\ddots$ & \vdots & \vdots \\
			\hline
			m-1 & \dots & k+1 & k \\
			\hline
		\end{tabular}
	\end{minipage}
	\vspace{0.2cm} ~ \\
	
		\begin{tabular}{|c|c|c|c|}
			\hline
			\cellcolor{blue!25} m-k & m-k-1& \dots & 1 \\
			\hline
			\cellcolor{blue!25} \vdots & \vdots & $\ddots$ & \vdots \\
			\hline
			\cellcolor{blue!25} m-2 & m-3 & \dots & k-1 \\
			\hline
			m-1 & m-2 & \dots  & k \\
			\hline
		\end{tabular}
	\caption{The conditions required by the relation in Theorem \ref{thm: relation 3}.}\label{fig: relation 3}
\end{figure}

\begin{theorem}\label{thm: relation 3}
	Let $\boldsymbol{\lambda}=(\lambda^1,\dots,\lambda^r)$ be a reduced Schubert problem on $\gr(k,m)$ with $3 \leq k \leq m-4$. Let $\mu^1=(m-k-1,0,0,\dots,0)$, $\mu^2=(m-k-2,0,\dots,0)$, and $\mu^3=(1,1,\dots,1,0)$  be conditions on $\gr(k,m)$ and suppose $\mu^1 \subset \lambda^1$, $\mu^2 \subset \lambda^2$, and $\mu^3 \subset \lambda^3,\lambda^4,\lambda^5$. Then, for the Schubert problem
	\[ 
	\boldsymbol{\widehat{\lambda}}=(\overline{\lambda^1-\mu^1},\overline{\lambda^2-\mu^2}, \overline{\lambda^3-\mu^3}, \overline{\lambda^4-\mu^3}, \overline{\lambda^5-\mu^3},  \lambda^6,\lambda^7,\dots,\lambda^r) 
	\]
	in $\gr(k-2,m-5)$, we have
	$\g(\boldsymbol{\lambda}) \leq \g\big(\boldsymbol{\widehat{\lambda}}\big) \wr S_3$.
\end{theorem}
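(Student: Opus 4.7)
The plan is to follow the template of Theorems \ref{thm: relation 1} and \ref{thm: relation 2}: isolate an auxiliary Schubert problem on a $\gr(2,5)$ whose Galois group is $S_3$, show that any solution $H$ of $\boldsymbol{\lambda}$ contains a $2$-plane $H\cap V$ which solves that auxiliary problem, and show that the complementary $(k-2)$-plane inside $H$ solves an instance of $\boldsymbol{\widehat{\lambda}}$ on $\gr(k-2,m-5)$.

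First I would fix defining flags $F^1_\bullet,\dots,F^r_\bullet$ in general position and pick $H\in\Omega_{\boldsymbol{\lambda}}\mathbf{F}$. Set $L_1:=F^1_2$, $L_2:=F^2_3$, and $M_j:=F^j_{m-2}$ for $j=3,4,5$. The hypotheses $\mu^1\subset\lambda^1$, $\mu^2\subset\lambda^2$, and $\mu^3\subset\lambda^j$ ($j=3,4,5$) force $\dim(H\cap L_1)=\dim(H\cap L_2)=1$ and $\dim(H\cap M_j)=k-1$. Let $V:=\langle L_1,L_2\rangle$, which has dimension $5$ by general position, and set $H':=H\cap V$. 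The lines $H\cap L_1$ and $H\cap L_2$ are independent, and a dimension count like the one in Theorem \ref{thm: relation 1} forces $\dim H'=2$. Viewed in $\gr(2,V)\cong\gr(2,5)$, the plane $H'$ meets $L_1$ (a $2$-plane, yielding the condition $(2,0)$), $L_2$ (a $3$-plane, yielding $(1,0)$), and each $M_j\cap V$ (a $3$-plane, yielding $(1,0)$). Thus $H'$ solves the Schubert problem $(2,0)\cdot(1,0)^4$ on $\gr(2,5)$, which by Lemma \ref{lem: aux probs} has three solutions $h_1,h_2,h_3$ and Galois group $S_3$.

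Next I would take $W:=M_3\cap M_4$, so that $\dim W=m-4$ and $\widehat H:=H\cap W\in\gr(k-2,W)$. Condition transfer proceeds exactly as in Theorems \ref{thm: relation 1} and \ref{thm: relation 2}: for $j\geq 6$, $\lambda^j$ transfers to $\lambda^j$ on the adjusted flag $h_iF^j_\bullet$ defined by $h_iF^j_\ell:=\langle F^j_\ell,h_i\rangle\cap W$; since $L_1,L_2\subset V$ are transverse to $W$, the conditions $\lambda^1$ and $\lambda^2$ become $\overline{\lambda^1-\mu^1}$ and $\overline{\lambda^2-\mu^2}$; and since $M_3,M_4$ define $W$, the conditions $\lambda^3,\lambda^4$ become $\overline{\lambda^3-\mu^3}$ and $\overline{\lambda^4-\mu^3}$. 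The remaining condition $\lambda^5$ transfers, on $\gr(k-2,W)\cong\gr(k-2,m-4)$, to a partition that still contains the restriction of $\mu^3$ and therefore has a full first column of length $k-2$. By Lemma \ref{lem: deficient}(3), stripping that column reduces the problem to an instance of $\boldsymbol{\widehat{\lambda}}$ on $\gr(k-2,m-5)$.

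Finally, the correspondence $H\leftrightarrow(h_i,\widehat H)$ is bijective: each $H$ uniquely determines an $h_i$ and a $\widehat H$, and conversely $H$ is recovered as $\langle h_i,\widehat H\rangle$. Exactly as in the previous two theorems, any monodromy loop permutes the three $h_i$'s by some $\tau\in S_3$ and independently permutes the fibre of $\widehat H$-solutions over each $h_i$ by an element of $\g\big(\boldsymbol{\widehat{\lambda}}\big)$, which yields the inclusion $\g(\boldsymbol{\lambda})\leq\g\big(\boldsymbol{\widehat{\lambda}}\big)\wr S_3$. The main technical hurdle I anticipate is the dimension bookkeeping for the five condition transfers, and in particular verifying that after restriction to $\gr(k-2,m-4)$ the transferred $\lambda^5$ really does have a full first column so that Lemma \ref{lem: deficient}(3) genuinely brings us down to $\gr(k-2,m-5)$.
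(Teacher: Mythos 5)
Your overall strategy is the paper's: you identify the same auxiliary problem (the $5$-space $V=\langle F^1_2,F^2_3\rangle$, on which $F^1_2$, $F^2_3$, and the three $F^j_{m-2}\cap V$ cut out the problem $(2,0)\cdot(1,0)^4$ with three solutions and Galois group $S_3$ by Lemma \ref{lem: aux probs}), and you draw the wreath-product conclusion the same way. Where you diverge is the complementary space. The paper sets $W:=J_3\cap J_4\cap\langle h_i,J_5\rangle$, which has dimension $m-5$ and is complementary to $V$, so the condition transfers run as in Theorems \ref{thm: relation 1} and \ref{thm: relation 2} and one lands directly on $\gr(k-2,m-5)$. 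Your $W:=M_3\cap M_4$ has dimension $m-4$, so $\dim V+\dim W=m+1$ and $V\cap W$ is a line; the ``this would contradict $V$ and $W$ being in direct sum'' step that you import wholesale from the earlier proofs is therefore not available. What you actually need is $h_i\cap W=0$, i.e., that the solution $2$-plane $h_i$ does not contain the line $M_3\cap M_4\cap V$. This does hold for general flags (containing that fixed line is a codimension-$3$ condition on $\gr(2,V)$, which together with the remaining codimension-$4$ worth of auxiliary conditions exceeds $\dim\gr(2,V)=6$), but it is an extra genericity argument you must supply; the transfer does not ``proceed exactly as in'' Theorems \ref{thm: relation 1} and \ref{thm: relation 2}.

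Second, your reduction of $\lambda^5$ via Lemma \ref{lem: deficient}(3) works, but only if $\lambda^5$ is transferred along the modified flag $\langle h_i,F^5_\bullet\rangle\cap W$: since $\dim(h_i\cap F^5_{m-2})=1$ one gets $H\subset\langle h_i,F^5_{m-2}\rangle$, hence $\widehat H$ lies in the $(m-5)$-dimensional subspace $\langle h_i,F^5_{m-2}\rangle\cap W$ of $W$ and the transferred partition has a full first column of length $k-2$. With the unmodified flag $F^5_\bullet\cap W$ one only gets $\dim(\widehat H\cap F^5_{m-2}\cap W)\geq k-3$, a first column of length $k-3$, and the lemma does not apply. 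This is exactly the hurdle you flagged, and it is resolvable --- it mirrors how the paper's proof of Theorem \ref{thm: relation 2} handles its third $\mu^1$-condition before invoking Lemma \ref{lem: deficient}. Alternatively, absorbing $\langle h_i,F^5_{m-2}\rangle$ into the definition of $W$ as the paper does restores the direct-sum decomposition $\mathbb{C}^m=V\oplus W$ and removes both issues at once.
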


\begin{proof}
Let $F^1_\bullet,\dots,F^r_\bullet$ be the defining flags of an instance of $\boldsymbol{\lambda}$ in general position, and let $L:= F^1_2$, $K:= F^2_3$, and $J_i=F^i_{m-2}$ for $i=3,4,5$. We begin by showing that there is an auxiliary problem $\T\cdot\I^4$ on a $\gr(2,5)$ as a subproblem of $\boldsymbol{\lambda}$. 

Let $V:=\langle L,K \rangle \cong \mathbb{C}^5$ and $H^\prime:= H \cap V$ for $H \in \Omega_{\boldsymbol{\lambda}}\mathbf{F}$. Since $\dim(H \cap L) = 1$ and $\dim(H \cap K) \geq 1$, we have $H^\prime \in \gr(2,V)$. For each $J_i$, we have $\codim_{\mathbb{C}^m} J_i = 2$ and $\dim(H \cap J_i) = 3$. Thus, $\dim(J_i \cap V) = 3$ and $\dim(H^\prime \cap J_i) = 1$. Hence, the $J_i$ and $K$ each give the condition $\I$ on $\gr(2,V)$ while $L$ gives the condition $\T$ on $\gr(2,V)$. This gives us the Schubert problem $\T \cdot \I^4$ on a $\gr(2,5)$, which has three solutions $h_1,h_2,h_3$ and Galois group $S_3$ by Lemma \ref{lem: aux probs}.

Let $h_i \in \{h_1,h_2,h_3\}$ such that $H^\prime=h_i$. We now show that $H$ may be written as $\langle h_i, h_{i,j} \rangle$ where $h_{i,j}$ satisfies an instance of $\boldsymbol{\widehat{\lambda}}$ on a $\gr(k-2,m-5)$. Let $W:=J_3 \cap J_4 \cap \langle h_i, J_5 \rangle$. Since $\codim_{\mathbb{C}^m}J_i=2$, $\codim_H J_i=1$, and $\dim(h_i \cap J_3)=1$, we see that $\dim(W)=m-5$. Let $\widehat{H}=H \cap W$. Moreover, since $L$, $K$, and the $J_i$ are in general position, $V$ and $W$ are in direct sum.

We now want to show that $\widehat{H}$ satisfies an instance of the Schubert problem $\boldsymbol{\widehat{\lambda}}$ on $\gr(k-2,W)$. Consider the conditions $\lambda^j$ for $j=6,7,\dots,r$, and let $h_iF^j_{\ell}:= \langle F^j_\ell,h_i \rangle \cap W$. Since the $F^j_\ell$ are in general position with $W$, $\dim(F^j_\ell \cap W) = \max\{0,\ell-2\}$. Thus, $\dim(h_iF^j_\ell \cap W)=\ell$. Furthermore, since $\dim(\widehat{H} \cap h_iF^j_\ell) = \dim(H \cap \langle F^j_\ell, h_i \rangle)-2 = \dim(H \cap F^j_\ell)$, we see that $\widehat{H} \in \Omega_{\lambda^j} h_iF^j_\bullet$.

We next consider what conditions on $\widehat{H} \in \gr(k-2,W)$ are implied by the condition $\lambda^1$ on $\gr(k,m)$. Let $h_iF^1_\ell=\langle h_i, F^1_{\ell+5}\rangle \cap W$. Since $\dim(h_i \cap F^1_4)=1$ and $W$ is in general position with $F^1_\bullet$, we have $\dim(h_iF^1_\ell)=\ell$. Moreover, if $\dim(H \cap F^1_{\ell+5})=d$, then $\dim(H \cap \langle h_i, F^1_{\ell+5})=d+1$. In this case, we see $\dim\big(\widehat{H} \cap h_iF^1_\ell)=d-1$. This gives the condition $\overline{\lambda^1-\mu^1}$ on the flag $h_iF^1_1\subset\dots\subset h_iF^1_{m-5}$. Repeating the same argument for $\lambda^2$ and $F^2_\bullet$ with the exception of defining $h_iF^2_\ell=\langle h_i, F^1_{\ell+4}\rangle \cap W$ gives the condition $\overline{\lambda^2-\mu^2}$ on the flag $h_iF^2_1\subset\dots\subset h_iF^2_{m-5}$.

Finally, we consider what conditions on $\gr(k-2,W)$ are implied by $\lambda^j$ on $\gr(k,m)$ for $j=3,4,5$. Let $h_iF^j_{\ell-1}:=\langle h_i, F^j_\ell \rangle \cap W$. Since our flags are in general position, $\dim(F^j_\ell \cap W) = \ell-2$ 
and $\dim(F^j_\ell \cap \widehat{H})=d-1$. Since $\dim(h_iF^j_{\ell-1} \cap \widehat{H}) = d$, we must have $\dim(h_iF^j_{\ell-1}) \geq \ell-1$. Moreover, if $\dim(h_iF^j_{\ell-1}) \geq \dim(F^j_\ell \cap W)+2$, then $h_i$ and $W$ would intersect nontrivially. Since this would contradict $V$ and $W$ being in direct sum, we have $\dim(h_iF^j_{\ell-1}) = \ell-1$. Moreover, if there is some $d<k-1$ such that $\dim(H \cap F^j_\ell) = d$ for some $\ell < m-k+d-1$, then
\[
\dim(h_iF^j_{\ell-1} \cap \widehat{H}) = \dim(\langle h_i, F^j_\ell \rangle \cap H) - 2 = d.
\] 
This yields the condition $\overline{\lambda^j-\mu^1}$ on $\gr(k-2,W)$.

We have now shown that $H=\langle h_i, h_{i,j} \rangle$ where $h_i$ is a solution to an instance of $\T\cdot\I^4$ on $\gr(2,V)$ and $h_{i,j}$ is a solution to an instance of $\boldsymbol{\widehat{\lambda}}$ on $\gr(k-2,W)$. Therefore, any permutation in $\g(\boldsymbol{\lambda})$ may be written in the form $(\sigma_1,\sigma_2,\sigma_3;\tau)$ with $\sigma_i \in \g\big(\boldsymbol{\hat{\lambda}}\big)$ and $\tau \in \g(\T\cdot\I^4)$ where $(\sigma_1,\sigma_2,\sigma_3;\tau) \cdot \langle h_i, h_{i,j} \rangle = \langle h_{\tau(i)}, h_{\tau(i),\sigma_i(j)} \rangle$. Thus, $\g(\boldsymbol{\lambda})$ is a subgroup of $\g\big(\boldsymbol{\widehat{\lambda}}\big) \wr S_3$.
\end{proof}

For the remainder of the relations, we will restrict our focus specifically to problems on $\gr(4,9)$. We will continue with relations that rely on only a few key conditions being present in the problem to break it down into smaller problems and determine restrictions on the Galois group from there.

\begin{theorem}\label{thm: relation 4}
	Let $\boldsymbol{\lambda}=(\lambda^1, \dots,\lambda^r)$ be a reduced Schubert problem on $\gr(4,9)$ with $\lambda^1 = \ThTh$, $\lambda^2 = \TT$, $\lambda^3 \supset \F$, and $\lambda^4 \supset \III$. Then, for the Schubert problem
	\[ 
	\boldsymbol{\widehat{\lambda}}=(\T, \I, \overline{\lambda^3-\F},\overline{\lambda^4-\III}, \lambda^5,\dots,\lambda^r) 
	\]
	in $\gr(2,4)$, we have
	$\g(\boldsymbol{\lambda}) \leq \g\big(\boldsymbol{\widehat{\lambda}}\big) \wr S_2$.
\end{theorem}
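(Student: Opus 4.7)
Following the strategy of Theorems~\ref{thm: relation 1}--\ref{thm: relation 3}, I would begin by letting $\mathbf F = (F^1_\bullet,\dots,F^r_\bullet)$ be defining flags in general position and choosing $H \in \Omega_{\boldsymbol{\lambda}}\mathbf F$. The four distinguished conditions supply canonical subspaces of $H$: from $\lambda^1 = \ThTh$ the $2$-plane $A_1 := H \cap F^1_4 \in \gr(2, F^1_4)$; from $\lambda^2 = \TT$ the $2$-plane $A_2 := H \cap F^2_5 \in \gr(2, F^2_5)$; from $\lambda^3 \supset \F$ the line $\ell := H \cap F^3_2$; and from $\lambda^4 \supset \III$ the $3$-plane $P := H \cap F^4_7$. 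General position forces $F^1_4 \cap F^2_5 = 0$ and $F^3_2 \cap F^4_7 = 0$, giving the direct-sum decompositions $H = A_1 \oplus A_2 = \ell \oplus P$.

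The next step is to identify a fixed $4$-plane $W \subset \mathbb{C}^9$ so that $\widetilde H := H \cap W$ is a $2$-plane in $W$ satisfying the Schubert problem $\boldsymbol{\widehat{\lambda}}$, and such that the map $H \mapsto \widetilde H$ has generic fiber of size two. The natural candidate is $W = F^1_4$, giving $\widetilde H = A_1$. Under this choice, the $\I$ condition of $\boldsymbol{\widehat{\lambda}}$ arises from the $2$-plane $W_1 := F^1_4 \cap F^4_7 \subset W$, since $A_1 \cap W_1 = A_1 \cap P$ is forced to be at least a line by $\lambda^4 \supset \III$; similarly, $\lambda^3 \supset \F$ produces a constraint involving the $2$-plane $U_1 := p_1(F^3_2) \subset F^1_4$, where $p_1 \colon \mathbb{C}^9 = F^1_4 \oplus F^2_5 \to F^1_4$ is the projection, and the $\T$ condition of $\boldsymbol{\widehat{\lambda}}$ should capture the specific line of $U_1$ that $A_1$ contains (this line depends on auxiliary data and becomes fixed once the two-to-one choice is made). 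The modified conditions $\overline{\lambda^3-\F}$, $\overline{\lambda^4-\III}$ and the $\lambda^j$ for $j\geq 5$ transfer to $\gr(2,W)$ via pushed-forward flags, exactly as in Theorems~\ref{thm: relation 1}--\ref{thm: relation 3}.

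The outer $S_2$ factor of the wreath product comes from an implicit $\I^4$ auxiliary subproblem on some $\gr(2,4)$ built from $F^3_2$, $W_1$, and related residual data; by Lemma~\ref{lem: aux probs} this auxiliary has two solutions with Galois group $S_2$. The main obstacle will be to specify the auxiliary $4$-plane explicitly, verify that the four distinguished conditions really do induce such an $\I^4$ problem with exactly two solutions, and check that the $\T$ condition in $\boldsymbol{\widehat{\lambda}}$ corresponds precisely to the fixed line on $A_1$ provided by the two-to-one fiber. Once these verifications are complete, each $H$ decomposes as $H = \langle h_i, h_{i,j}\rangle$ with $h_i$ one of the two auxiliary solutions and $h_{i,j}$ a solution of $\boldsymbol{\widehat{\lambda}}$; any element of $\g(\boldsymbol{\lambda})$ then takes the form $(\sigma_1,\sigma_2;\tau)$ with $\sigma_i \in \g(\boldsymbol{\widehat{\lambda}})$ and $\tau \in \g(\I^4) = S_2$, yielding the desired inclusion $\g(\boldsymbol{\lambda}) \leq \g(\boldsymbol{\widehat{\lambda}}) \wr S_2$.
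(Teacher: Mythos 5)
Your proposal correctly identifies the template shared with Theorems \ref{thm: relation 1}--\ref{thm: relation 3} (an auxiliary problem whose two solutions index the blocks, an inner copy of $\boldsymbol{\widehat{\lambda}}$ within each block, and the decomposition $H=\langle h_i,h_{i,j}\rangle$), but the two geometric constructions that constitute the actual content of the proof are, respectively, missing and incorrect. You explicitly defer the construction of the auxiliary $\I^4$ problem (``the main obstacle will be to specify the auxiliary $4$-plane explicitly''), and that construction is the heart of the argument: in the paper it is carried out on the $4$-plane $V=\langle L,M\rangle\cap\langle L,K\rangle$, where $M=F^1_4$, $K=F^2_5$, $L=F^3_2$, $J=F^4_7$, with the four $\I$ conditions on $\gr(2,V)$ cut out by $L$, $M\cap V$, $K\cap V$, and $J\cap V$. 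Nothing in your sketch pins this down.

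Your one concrete choice, hosting the inner problem on $W=F^1_4$ with $\widetilde H=H\cap F^1_4$, cannot work. The conditions of $\boldsymbol{\widehat{\lambda}}$ have total codimension $2+1+(\lvert\lambda^3\rvert-4)+(\lvert\lambda^4\rvert-3)+\sum_{j\ge5}\lvert\lambda^j\rvert=6$, which exceeds $\dim\gr(2,4)=4$; the paper's proof in fact realizes $\boldsymbol{\widehat{\lambda}}$ on the $5$-plane $\widehat V=\langle J\cap K,\,J\cap M\rangle$, where the count matches $\dim\gr(2,5)=6$, and the $\gr(2,4)$ in the statement only appears after a reduction via Lemma \ref{lem: deficient}. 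Moreover the decomposition $H=\langle h_i,\,H\cap F^1_4\rangle$ fails: the auxiliary solutions constructed in the paper satisfy $\dim(h_i\cap F^1_4)=1$ (this is exactly the $\I$ condition imposed by $M\cap V$), and since $h_i\subset H$ that line lies in $H\cap F^1_4$, so the span has dimension at most $3<4$. Finally, your assertion that $H\mapsto\widetilde H$ is two-to-one onto the solutions of $\boldsymbol{\widehat{\lambda}}$ would produce blocks of size two indexed by those solutions, i.e.\ a containment in $S_2\wr\g\big(\boldsymbol{\widehat{\lambda}}\big)$ rather than the claimed $\g\big(\boldsymbol{\widehat{\lambda}}\big)\wr S_2$; these differ as soon as $\boldsymbol{\widehat{\lambda}}$ has more than two solutions (for the type-ten problems the correct bound is $S_3\wr S_2$ of order $72$, not $S_2\wr S_3$ of order $48$). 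The blocks must be indexed by the two solutions of the auxiliary $\I^4$, with the inner problem posed on flags $h_iF^j_\bullet$ that depend on $h_i$, as in the paper's argument.
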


\begin{proof}
	Let $F^1_\bullet, \dots, F^r_\bullet$ be the defining flags of an instance of $\boldsymbol{\lambda}$ in general position, and let $M:=F^1_4$, $K:=F^2_5$, $L:=F^3_2$, and $J:=F^4_7$. Consider $V:= \langle L, M \rangle \cap \langle L, K \rangle$ and $H^\prime := H \cap V$ where $H \in \Omega_{\boldsymbol{\lambda}}\mathbf{F}$. Since $\dim \langle L,M \rangle = 6$ and $\dim \langle L, K \rangle =7$, $\dim V \geq 4$. Moreover, if $\dim V > 4$, then $\dim \big(M \cap \langle L , K \rangle\big) \geq 3$, but these spaces are in general position. Thus, $\dim V = 4$. Moreover, since $\dim \big(H \cap \langle L,M \rangle\big) = 3$ and $\dim \big(H \cap \langle L, K \rangle\big) =3$ and these spaces are in general position, $\dim H^\prime =2$. Therefore, we have $H^\prime \in \gr(2,V)$.
	
	Since $L \subset V$, $L$ gives the condition $\I$ on $\gr(2,V)$. Moreover, $\dim \big(\langle L, M \rangle \cap V \big) = 4$ and $L \subset V$, thus $\dim (M \cap V) = 2$. Since $H^\prime \subset \langle L, M \rangle$, $\dim (H^\prime \cap L) = 1$, and $\dim \big(H \cap \langle L,M \rangle ) = 3$, we have $\dim (H^\prime \cap M)= 1$ as well. Thus, $M \cap V$ gives the condition $\I$ on $\gr(2,V)$. Similarly, $K \cap V$ gives the condition $\I$ on $\gr(2,N)$. Finally, $\dim(J \cap H^\prime)=1$ since $\dim (J \cap H)=3$, and $\dim(J \cap V)=4-2=2$ since $\codim_{\mathbb{C}^9} J=2$. Thus, $J \cap V$ gives the condition $\I$ on $\gr(2,V)$. This gives us the auxiliary problem $\I^4$ on a $\gr(2,4)$ as a subproblem of $\boldsymbol{\lambda}$. This auxiliary problem has two solutions $h_1,h_2$ and Galois group $S_2$ by Lemma \ref{lem: aux probs}.
	
	Suppose $H \supset H^\prime=h_i$ where $h_i \in \{h_1,h_2\}$. We consider $\widehat{V}:=\langle J \cap K, J \cap M \rangle$. Since $\dim(J \cap K)= 3$ and $\dim(J \cap M)=2$, we see $\dim(\widehat{V})=5$. Let $\widehat{H}:= H \cap \widehat{V}$. Then $\dim(\widehat{H}) \geq 2$ since $\dim\big(H \cap (J \cap K)\big), \dim\big(H \cap (J \cap M)\big) \geq 1$. Moreover, since $J,K,M,$ and $L$ are all in general position with respect to one another, we have $\mathbb{C}^9 = V \oplus \widehat{V}$. Using arguments analogous to those in the proof of Theorem \ref{thm: relation 1}, we know $\widehat{H}$ satisfies the Schubert conditions $\overline{\lambda^3-\F}$ and $\overline{\lambda^4-\III}$ on the flags $F^3_\bullet/V$ and $F^4_\bullet/V$ respectively as well as the conditions $\lambda^5, \dots, \lambda^r$ on the flags $h_iF^5_\bullet,\dots,h_iF^r_\bullet$ respectively.

	We first look to the relation $\widehat{H}$ has with $M$. Since $\widehat{V}:=\langle J \cap K, J \cap M \rangle$, $\dim(J \cap M) = 2$, and $\dim\big(H \cap (J \cap M)\big) = 1$, $M \cap \widehat{V}$ gives the condition $\T$. The same argument using $K$ gives us the condition $\I$. Thus, we have that the $\widehat{H}$ are solutions to the Schubert problem $\boldsymbol{\widehat{\lambda}}$. Thus, by the same arguments as in the proof of Theorem \ref{thm: relation 1}, we see the Galois group of $\boldsymbol{\lambda}$ is a subgroup of $\g\big(\boldsymbol{\widehat{\lambda}}\big) \wr S_2$.
\end{proof}

\begin{theorem}\label{thm: relation 7}
	The Galois groups of the Schubert problems $\ThTh \cdot \ThT \cdot \F \cdot \III \cdot \I^2, \ThTh \cdot \ThT \cdot \F \cdot \TII \cdot I, \ThTh \cdot \ThT \cdot \FI \cdot \III \cdot \I$, $\ThTh \cdot \ThT \cdot \FI \cdot \TI$, 
	$\ThThI \cdot \F \cdot \TT \cdot \III \cdot \I^2, \ThThI \cdot \F \cdot \TT \cdot \TII \cdot \I, \ThThI \cdot \FI \cdot \TT \cdot \III \cdot \I$, and $\ThThI \cdot \FI \cdot \TT \cdot \TII$
	on $\gr(4,9)$ are subgroups of $S_2 \wr S_2$.
\end{theorem}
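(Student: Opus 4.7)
The plan is to apply, verbatim, the geometric construction used in the proof of Theorem \ref{thm: relation 4} to each of the eight listed problems. Every one of them contains four ``key'' conditions, one containing each of $\ThTh$, $\TT$, $\F$, and $\III$: the first four problems have $\ThTh$, $\ThT$, and conditions $\supset\F$, $\supset\III$, while the last four have $\ThThI$, $\TT$, and conditions $\supset\F$, $\supset\III$. These four key conditions supply, respectively, the 4-dimensional flag $M$, the 5-dimensional flag $K$, the 2-dimensional flag $L$, and the 7-dimensional flag $J$ required to form $V = \langle L, M\rangle \cap \langle L, K\rangle$ and $\widehat{V} = \langle J\cap K, J\cap M\rangle$. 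As in the proof of Theorem \ref{thm: relation 4}, $H^\prime = H\cap V$ solves an instance of $\I^4$ on $\gr(2,V)$, giving two solutions $h_1,h_2$ with Galois group $S_2$ by Lemma \ref{lem: aux probs}, while $\widehat{H} = H\cap\widehat{V}$ solves a residual Schubert problem $\widehat{\boldsymbol{\lambda}}$ on $\gr(2,\widehat{V})$. Writing $H = \langle h_i, h_{i,j}\rangle$ yields $\g(\boldsymbol{\lambda}) \leq \g(\widehat{\boldsymbol{\lambda}})\wr S_2$ by the same wreath-product argument as in that theorem.

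It then suffices to show $\g(\widehat{\boldsymbol{\lambda}}) \leq S_2$, equivalently, that $\widehat{\boldsymbol{\lambda}}$ has exactly two solutions in each of the eight cases. I will compute $\widehat{\boldsymbol{\lambda}}$ explicitly: the base contribution is $(\T,\I,\overline{\lambda^3-\F},\overline{\lambda^4-\III},\lambda^5,\dots,\lambda^r)$, with $\T$ and $\I$ coming from $M\cap\widehat{V}$ and $K\cap\widehat{V}$ as in Theorem \ref{thm: relation 4}. Whenever one of the four key conditions is strictly larger than the minimal $\ThTh$, $\TT$, $\F$, or $\III$, the extra boxes produce an additional Schubert condition on $\widehat{H}$. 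For example, in problems 1--4 the extra box in $\ThT - \TT$ forces a line of $H$ into $F^2_3$; intersecting with $\widehat{V}$ shows this line is the fixed 1-dimensional space $F^2_3\cap\widehat{V}$, which imposes the condition $\Th$ on $\widehat{H}$. Analogous line/plane conditions arise from $\FI-\F$, $\TII-\III$, and $\ThThI-\ThTh$ in problems 5--8. I would track these case by case to obtain the precise $\widehat{\boldsymbol{\lambda}}$, check that the total codimension equals $\dim\gr(2,\widehat{V})$, and then apply the Geometric Littlewood--Richardson rule (equivalently Vakil's checkerboard tournament) to verify that each of the eight residuals has exactly two solutions. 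Once this is done, Corollary \ref{cor: transitive} forces $\g(\widehat{\boldsymbol{\lambda}}) = S_2$, since $S_2$ is the only transitive subgroup of $S_2$, giving $\g(\boldsymbol{\lambda}) \leq S_2\wr S_2$.

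The main obstacle is the bookkeeping for the ``extra'' conditions coming from the enlarged key partitions. Each additional box has to be correctly identified either as (i) a further constraint imposed on $H^\prime = h_i$ within $V$ (which, if it occurred, would collapse the auxiliary $\I^4$ to fewer than two solutions and break the argument), (ii) a Schubert condition that lands cleanly on $\widehat{H}$ inside $\widehat{V}$, or (iii) a compatibility condition forcing a diagonal line of $H = \langle h_i,h_{i,j}\rangle$ to meet a prescribed subspace. Disentangling these three possibilities for each of the four potential enlargements ($\ThT\supset\TT$, $\ThThI\supset\ThTh$, $\FI\supset\F$, $\TII\supset\III$), verifying that in every one of the eight listed problems each enlargement lands in case (ii) with $\widehat{V}$-dimension balancing correctly, and then enumerating the two resulting solutions, is where essentially all of the content of the theorem resides; after that bookkeeping, the wreath-product conclusion is immediate from Theorem \ref{thm: relation 4} and Lemma \ref{lem: aux probs}.
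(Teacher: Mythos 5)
Your overall architecture (auxiliary $\I^4$ on a $\gr(2,4)$ with solutions $h_1,h_2$, a residual problem, and the wreath-product bound) matches the paper's, but the step you defer to ``bookkeeping'' is exactly where the argument breaks, and the paper resolves it by a different construction than the one you propose. Theorem \ref{thm: relation 4} hypothesizes $\lambda^1=\ThTh$ and $\lambda^2=\TT$ \emph{exactly}; in each of the eight problems here one of these is strictly enlarged ($\ThT\supsetneq\TT$ in the first four, $\ThThI\supsetneq\ThTh$ in the last four), so it does not apply verbatim, and the extra box does not land in your case (ii). Concretely, for the first four problems the enlargement adds the condition $\dim(H\cap F^2_3)\geq 1$. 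The line $H\cap F^2_3$ lies in the $2$-plane $H\cap K$ (where $K=F^2_5$), and $H\cap K$ splits as the direct sum of the line $h_i\cap K$ and the line $\widehat{H}\cap J\cap K$; generically $H\cap F^2_3$ is a diagonal line in this $2$-plane lying in neither summand. Your claim that ``intersecting with $\widehat{V}$ shows this line is the fixed $1$-dimensional space $F^2_3\cap\widehat{V}$'' would force $H\cap F^2_3\subset\widehat{H}$, equivalently that $H$ meets the fixed line $F^2_3\cap J$, which is a codimension-$5$ condition on $H\in\gr(4,9)$ and cannot hold for general flags. So the enlargement is precisely your case (iii), a compatibility condition coupling $h_i$ to the residual factor; asserting that every enlargement lands in case (ii) is not postponed bookkeeping but an incorrect claim, and with it the identification of $\widehat{\boldsymbol{\lambda}}$ and the two-solution count collapse.

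The paper's sketch absorbs the diagonal condition by changing the construction rather than adding a Schubert condition on the fixed $\widehat{V}$: for the first four problems it replaces $K$ by $K':=\langle h_i,F^2_3\rangle$ and builds the residual ambient space as $\langle J\cap K',J\cap M\rangle$, a $\gr(2,4)$ rather than the $\gr(2,5)$ of Theorem \ref{thm: relation 4}; for the last four it replaces $M$ by $M':=\langle h_i,F^1_7\rangle$ and intersects with it. In both cases the residual problem's ambient space itself depends on the auxiliary solution $h_i$, which is what turns the diagonal constraint into honest Schubert data for the second factor. To repair your proof you would need to redo the construction with these $h_i$-dependent spaces; after that, your closing steps (residual has two solutions, hence Galois group $S_2$ by Lemma \ref{lem: aux probs} or by Corollary \ref{cor: transitive}, hence $\g(\boldsymbol{\lambda})\leq S_2\wr S_2$) go through as in the paper.
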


\begin{proof}[Sketch of proof.]
	The proof of this theorem is analogous to that of Theorem \ref{thm: relation 4} with the following changes:
	
	When considering the Schubert problem $\ThTh \cdot \ThT \cdot \F \cdot \III \cdot \I^2, \ThTh \cdot \ThT \cdot \F \cdot \TII \cdot I, \ThTh \cdot \ThT \cdot \FI \cdot \III \cdot \I$, or $\ThTh \cdot \ThT \cdot \FI \cdot \TI$, we let $K^\prime := \langle h_i, F^2_3 \rangle$. We then define $V:=\langle J \cap K^\prime, J \cap M \rangle$ as a $\gr(2,4)$ and continue the proof as above replacing instances of $K$ with $K^\prime$.
	
	When considering the Schubert problem $\ThThI \cdot \F \cdot \TT \cdot \III \cdot \I^2, \ThThI \cdot \F \cdot \TT \cdot \TII \cdot \I, \ThThI \cdot \FI \cdot \TT \cdot \III \cdot \I$, or $\ThThI \cdot \FI \cdot \TT \cdot \TII$, we let $M^\prime:= \langle h_i, F^1_7 \rangle$. We then define $V:= \langle J \cap K, J \cap M \rangle \cap M^\prime$ as a $\gr(2,4)$ and continue the proof as above replacing instances of $M$ with $M^\prime$.
\end{proof}

\begin{theorem}\label{thm: relation 5}
	The Galois groups of the Schubert problems $\FTT \cdot \TT \cdot \III^2 \cdot \I^2,\FTT \cdot \TT \cdot \TII \cdot \III \cdot \I$, and $\FTT \cdot \TT \cdot \TII^2$ on $\gr(4,9)$ are subgroups of $S_2 \wr S_2$.
\end{theorem}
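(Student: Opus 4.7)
The plan is to extend the strategy of Theorem \ref{thm: relation 4} to the present setting. Let $F^1_\bullet, \ldots, F^r_\bullet$ be defining flags of $\boldsymbol{\lambda}$ in general position and $H \in \Omega_{\boldsymbol{\lambda}}\mathbf{F}$. From $\FTT$ I extract $L := F^1_2$ and $M := F^1_6$ (with $L \subset M$, $\dim(H\cap L)=1$, and $\dim(H\cap M)=3$), and from $\TT$ I take $K := F^2_5$ (with $\dim(H \cap K)=2$). In each of the three Schubert problems there are exactly two conditions whose Young diagrams contain $\III$ (both $\III$'s in problem 1, the $\TII$ and the $\III$ in problem 2, both $\TII$'s in problem 3), and I write $J_1, J_2$ for their defining 7-dimensional flag subspaces, each satisfying $\dim(H \cap J_i)=3$.

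First I build the auxiliary problem. Set $V := \langle L, K \cap M \rangle$. Because $L \subset M$ while $K \cap M$ is a 2-plane of $M$ disjoint from $L$ in general position, $V$ is 4-dimensional, and a dimension count parallel to the proof of Theorem \ref{thm: relation 4} shows that $H' := H \cap V$ is 2-dimensional. The four 2-planes $L$, $K \cap M$, $J_1 \cap V$, and $J_2 \cap V$ each give the condition $\I$ on $\gr(2, V) \cong \gr(2, 4)$, so $H'$ solves an $\I^4$ problem. By Lemma \ref{lem: aux probs}, this auxiliary problem has two solutions $h_1, h_2$ with Galois group $S_2$.

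For each $h_i$, set $\widehat{V} := J_1 \cap J_2$ (which is 5-dimensional in general position, so that $\mathbb{C}^9 = V \oplus \widehat{V}$) and $\widehat{H} := H \cap \widehat{V}$, which is 2-dimensional. Translating the remaining conditions via $h_iF^j_\ell := \langle F^j_\ell, h_i\rangle \cap \widehat{V}$ as in the proof of Theorem \ref{thm: relation 1}, the conditions on $\widehat{H} \in \gr(2, \widehat V) \cong \gr(2, 5)$ become: a $\T$-condition from $\FTT$ (on the 2-plane $M \cap \widehat{V}$), a $\T$-condition from $\TT$ (on the 2-plane $\langle K, h_i\rangle \cap \widehat{V}$), and two $\I$-type conditions coming from the residual data beyond the $\III$ absorbed into $\widehat{V}$ (the two standalone $\I$'s in problem 1, the standalone $\I$ together with the residual from $\TII$ in problem 2, and the residuals from the two $\TII$'s in problem 3, where $\overline{\TII - \III} = \I$). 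The two $\T$-conditions fill a full column of the $2 \times 3$ block for $\gr(2, 5)$, so Lemma \ref{lem: deficient}(\ref{full column}) reduces the problem to $\I^4$ on $\gr(2, 4)$, which has two solutions with Galois group $S_2$ by Lemma \ref{lem: aux probs}.

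Combining both halves exactly as in the proof of Theorem \ref{thm: relation 4}, every solution $H$ has the form $\langle h_i, \widehat{h}_{i,j}\rangle$ with $(i,j) \in \{1,2\} \times \{1,2\}$, and every element of $\g(\boldsymbol{\lambda})$ has the form $(\sigma_1, \sigma_2; \tau) \in S_2 \wr S_2$, yielding $\g(\boldsymbol{\lambda}) \leq S_2 \wr S_2$. The main obstacle is the second-side translation step: while the auxiliary side is identical across the three cases, the residual $\I$-type data on $\widehat{H}$ is furnished differently by the three problems, so it takes careful case-by-case bookkeeping to check that each of the $\TII$ conditions contributes exactly the $\I$ predicted by $\overline{\TII - \III} = \I$ on $\gr(2, 5)$ and that no extra condition sneaks in from the rich partition structure of $\FTT$.
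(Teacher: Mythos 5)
Your proposal is correct and follows essentially the same route as the paper: the same auxiliary $\I^4$ on $V=\langle F^1_2,\, F^2_5\cap F^1_6\rangle$, the same complementary space $\widehat V = J_1\cap J_2$ carrying $\T^2\cdot\I^2$ on $\gr(2,5)$, the same reduction to $\I^4$ via Lemma \ref{lem: deficient}, and the same handling of the $\TII$ cases through $\overline{\TII-\III}=\I$. The "careful bookkeeping" you flag at the end is dispatched in the paper by the same codimension count you implicitly rely on, so there is no substantive difference.
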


\begin{proof}
	We first show that the  Galois group of $\boldsymbol{\lambda}=\FTT \cdot \TT \cdot \III^2 \cdot \I^2$ is a subgroup of $S_2 \wr S_2$. Let $F^1_\bullet, \dots, F^6_\bullet$ be the defining flags of $\boldsymbol{\lambda}$ in general position, and let $M_2:=F^1_2, M_6:=F^1_6, K:=F^2_5, J:=F^3_7, J^\prime:=F^4_7, I:=F^5_5,$ and $I^\prime:=F^6_5$. We begin by showing there is an auxiliary problem $\I^4$ on a $\gr(2,4)$ as a subproblem of $\boldsymbol{\lambda}$.
	
	Let $V:=\langle M_6 \cap K, M_2 \rangle \cong \mathbb{C}^4$ and $H^\prime:= H \cap V$ for $H \in \Omega_{\boldsymbol{\lambda}} \mathbf{F}$. Since $\dim(H \cap M_6)=3$ and $\dim(H \cap K)=2$, we have $\dim(H \cap M_6 \cap K)=1$. Thus, $H^\prime \in \gr(2,V)$ and $K \cap V$ gives us the condition $\I$ on $\gr(2,V)$. Since $M_2 \subset V$, $M_2$ gives us the condition $\I$ on $\gr(2,V)$. Moreover, $\dim(J \cap V)=2$ and $\dim(H^\prime \cap J)=1$, thus $J$ gives us the condition $\I$ on $\gr(2,V)$. Similarly, $J^\prime$ gives the condition $\I$ as well. This gives us the Schubert problem $\I^4$ on a $\gr(2,4)$, which has two solutions $h_1, h_2$ and Galois group $S_2$ by Lemma \ref{lem: aux probs}.
	
	Let $h_i \in \{h_1,h_2\}$ such that $H^\prime = h_i$. We now consider the space $\widehat{V}:= J \cap J^\prime \cong \mathbb{C}^5$. Since our flags are in general position, we have $V \oplus \widehat{V} \cong \mathbb{C}^9$. Let $\widehat{H}=H \cap \widehat{V}$. Since $\dim(H \cap J)=\dim(H \cap J^\prime)=3$, $\dim \widehat{H}=2$. Thus, $\widehat{H} \in \gr(2,\widehat{V})$. Moreover, since $M_6$ and $\widehat{V}$ are in general position and $\dim(H \cap M_6)=3$, $\dim(\widehat{H} \cap M_6)=1$. This gives the condition $\T$ on $\gr(2,\widehat{V})$. Similarly, $\langle K, h_i \rangle$ gives the condition $\T$ on $\gr(2,\widehat{V})$. Finally, $\dim \big(\langle I, h_i \rangle \cap \widehat{V}\big)= \dim \big(\langle I^\prime, h_i \rangle \cap \widehat{V}\big)=3$. Thus, $\langle I,h_i \rangle \cap \widehat{V}$ and $\langle I^\prime,h_i \rangle \cap \widehat{V}$ both give the condition $\I$ on $\gr(2,\widehat{V})$. Thus, we have the Schubert problem $\I^2\cdot \T^2$ on a $\gr(2,5)$, which has two solutions and Galois group $S_2$ by Lemma \ref{lem: aux probs}. Thus, by the same argument as in the previous proofs, $\g(\boldsymbol{\lambda})$ is a subgroup of $S_2 \wr S_2$.
	
	For each of the other Schubert problems, the argument is the same replacing any missing $\I$ conditions with the new $\overline{\TII-\III}$ condition in the same manner as in the proof of Theorem \ref{thm: relation 1}.
\end{proof}

\begin{theorem}\label{thm: relation 6}
	The Galois groups of the Schubert problems $\ThThT \cdot \TT^2 \cdot \III \cdot \I$ and $\ThThT \cdot \TT^2 \cdot \TII$ on $\gr(4,9)$ are subgroups of $S_2 \wr S_2$.
\end{theorem}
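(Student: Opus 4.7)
The plan is to follow the template established in the proof of Theorem~\ref{thm: relation 4} (and refined in Theorem~\ref{thm: relation 5}) by exhibiting, for each of the two Schubert problems $\boldsymbol{\lambda}$, a 4-dimensional flag subspace $V$ and a complementary 5-dimensional flag subspace $\widehat{V}$ with $V \oplus \widehat{V} = \mathbb{C}^9$, such that every solution $H$ decomposes as $H = (H \cap V) \oplus (H \cap \widehat{V})$, where $H \cap V$ is a 2-plane in $V$ solving an $\I^4$ auxiliary problem on $\gr(2,V) = \gr(2,4)$ and $H \cap \widehat{V}$ is a 2-plane in $\widehat{V}$ solving a two-solution auxiliary problem on $\gr(2,\widehat{V}) = \gr(2,5)$. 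Each auxiliary has two solutions and Galois group $S_2$ by Lemma~\ref{lem: aux probs}, so the conclusion $\g(\boldsymbol{\lambda}) \leq S_2 \wr S_2$ will follow by the wreath-product argument in the last paragraph of the proof of Theorem~\ref{thm: relation 4}: each $H$ is encoded by a pair $(h_i, \widehat{h}_{i,j})$, and any element of $\g(\boldsymbol{\lambda})$ takes the form $(\sigma_1,\sigma_2;\tau) \cdot \langle h_i, \widehat{h}_{i,j}\rangle = \langle h_{\tau(i)}, \widehat{h}_{\tau(i),\sigma_i(j)}\rangle$.

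For the first auxiliary I would take $V := \langle M \cap K, M \cap K'\rangle$, where $M := F^1_6$ and $M' := F^1_4$ come from the two essential rows of $\ThThT$, and $K := F^2_5$, $K' := F^3_5$ come from the two copies of $\TT$. By general position $V$ is 4-dimensional and sits inside $M$. Since $\dim(H \cap M) \geq 3$ while $\dim(H \cap K), \dim(H \cap K') \geq 2$, intersection counts inside the 4-plane $H$ force $\dim(H \cap M \cap K), \dim(H \cap M \cap K') \geq 1$, so $H \cap V$ is a 2-plane. The four $\I$ conditions on $H \cap V \in \gr(2,V)$ should come from the 2-planes $M \cap K$, $M \cap K'$, $J \cap V$ (with $J := F^4_7$), and $M' \cap V$; the key point for the last is that $H \cap V$ and $H \cap M'$ are both 2-planes inside the 3-plane $H \cap M$ and hence must share a line.

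For the second auxiliary I would try $\widehat{V} := \langle J \cap K, J \cap M'\rangle$, a 5-plane inside $J$ that is complementary to $V$ by general position, and translate the remaining conditions on $H$ to conditions on $\widehat{H} := H \cap \widehat{V}$ in the style of the proof of Theorem~\ref{thm: relation 5}---directly for flags whose intersection with $\widehat{V}$ is large enough (such as $K$, which contributes $\I$ via the 3-plane $J \cap K$, and $M$ or $M'$, which contributes $\T$ via the 2-plane $J \cap M'$), and through the $h_i$-dependent flags $\langle h_i, F\rangle \cap \widehat{V}$ for the rest. For the first problem this should yield the auxiliary $\I^2 \cdot \T^2$ on $\gr(2,5)$, which has two solutions by Lemma~\ref{lem: aux probs}. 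The principal obstacle I foresee is the second problem $\ThThT \cdot \TT^2 \cdot \TII$: because the two essential conditions of $\TII$ share the flag $F^4$, the 4-plane $F^4_4$ (playing the role of the independent 5-plane $F^5_5$ of the first problem) is more restrictive and contributes one extra unit of codimension to the $\gr(2,5)$ auxiliary under this naive $\widehat{V}$, so $\widehat{V}$ will have to be re-chosen---most likely by incorporating a sub-flag of $F^4$ into its span---to balance the codimensions and recover a legitimate two-solution Schubert problem on $\gr(2,5)$.
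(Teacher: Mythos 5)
Your treatment of $\ThThT \cdot \TT^2 \cdot \III \cdot \I$ is essentially the paper's proof: your $V = \langle M\cap K,\, M\cap K'\rangle$ and $\widehat{V} = \langle J\cap K,\, J\cap M'\rangle$ are literally the spaces $\langle L_6\cap K,\, L_6\cap K'\rangle$ and $\langle J\cap L_4,\, J\cap K\rangle$ used there, and you arrive at the same auxiliaries $\I^4$ on $\gr(2,4)$ and $\I^2\cdot\T^2$ on $\gr(2,5)$. (Minor point, which the paper also elides: Lemma \ref{lem: aux probs} does not list $\I^2\cdot\T^2$; one first reduces it to $\I^4$ on $\gr(2,4)$ via Lemma \ref{lem: deficient}.)

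The gap is in the second problem, and the ``obstacle'' you diagnose is illusory, so your proposed fix points in the wrong direction. You claim that $F^4_4$ contributes an extra unit of codimension to the $\gr(2,\widehat{V})$ auxiliary, which comes from the general-position count $\dim\bigl(\langle h_i, F^4_4\rangle \cap \widehat{V}\bigr) = 6+5-9 = 2$. But $F^4_4$ and $\widehat{V}$ are \emph{not} in general position in $\mathbb{C}^9$: both lie inside $J = F^4_7$. Since $h_i\cap J$ is a line not in $F^4_4$, the span $\langle h_i, F^4_4\rangle$ meets $J$ in a $5$-plane, and two $5$-planes inside the $7$-dimensional $J$ meet in dimension $3$. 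So $\langle h_i, F^4_4\rangle\cap\widehat{V}$ is a $3$-plane in $\widehat{V}\cong\mathbb{C}^5$ meeting $\widehat{H}$, i.e.\ it imposes exactly the condition $\I$ --- the same codimension-one condition that $\langle h_i, F^5_5\rangle\cap\widehat{V}$ imposes in the first problem. The total codimension on $\gr(2,\widehat{V})$ is therefore still $2+1+2+1=6$, the auxiliary is again $\I^2\cdot\T^2$, and no re-choice of $\widehat{V}$ is needed; this is exactly what the paper means by replacing the missing $\I$ with the condition $\overline{\TII-\III}=\I$. As written, your proposal leaves $\ThThT \cdot \TT^2 \cdot \TII$ unproven, and pursuing the suggested modification of $\widehat{V}$ would be unnecessary work at best.
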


\begin{proof}
	We first prove the Galois group of $\boldsymbol{\lambda}=\ThThT \cdot \TT^2 \cdot \III \cdot \I$ is a subgroup of $S_2 \wr S_2$. Let $F^1_\bullet, \dots, F^5_\bullet$ be defining flags for $\boldsymbol{\lambda}$ in general position, and let $L_4:= F^1_4, L_6:=F^1_6, K:=F^2_5, K^\prime := F^3_5, J:= F^4_7,$ and $I:= F^5_5$. We begin by showing there is an auxiliary problem $\I^4$ on a $\gr(2,4)$ as a subproblem of $\boldsymbol{\lambda}$.
	
	Let $V:=\langle L_6 \cap K, L_6 \cap K^\prime \rangle \cong \mathbb{C}^4$ and $H^\prime:= H \cap V$ for $H \in \Omega_{\boldsymbol{\lambda}} \mathbf{F}$. Since $\dim(H \cap L_6)=3$ and $\dim(H \cap K)=\dim(H \cap K^\prime)=2$, we have $\dim H^\prime = 2$. Thus, $H^\prime \in \gr(2,V)$ and both $K \cap V$ and $K^\prime \cap V$ give us the condition $\I$ on $\gr(2,V)$. Moreover, since $V \subset L_6$ and $\codim_{L_6} L_4 = 2$, $L_4$ gives us the condition $\I$ on $\gr(2,V)$ as well. Finally, $\dim(J \cap V)=2$ and $\dim(H^\prime \cap J)=1$, thus $J$ gives us the condition $\I$ on $\gr(2,V)$. We have the Schubert problem $\I^4$ on a $\gr(2,4)$, which has two solutions $h_1, h_2$ and Galois group $S_2$ by Lemma \ref{lem: aux probs}.
	
	Let $h_i \in \{h_1, h_2 \}$ such that $H^\prime = h_i$. We now consider the space $\widehat{V}:=\langle J \cap L_4, J \cap K \rangle \cong \mathbb{C}^5$. Since our flags are in general position, we have $V \oplus \widehat{V} \cong \mathbb{C}^9$. Let $\widehat{H}=H \cap \widehat{V}$. Since $\dim(H \cap J)=3$ and $\dim(H \cap L_4) = \dim(H \cap K)=2$, $\dim \widehat{H}=2$. Thus, $\widehat{H} \in \gr(2,\widehat{V})$. Thus, $\widehat{H} \in \gr(2,\widehat{V})$. Moreover, $\dim(J \cap L_4)=2$ and $\dim(J \cap K)=3$, thus $L_4 \cap \widehat{V}$ and $K \cap \widehat{V}$ give conditions $\T$ and $\I$ respectively on $\gr(2,\widehat{V})$. Since $\dim(h_i \cap K^\prime)=1$, $\dim \langle h_i, K^\prime\rangle =6$ and $\dim \big( H \cap  \langle h_i, K^\prime\rangle \big)=3$, thus $\langle h_i, K^\prime\rangle \cap \widehat{V}$ gives the condition $\T$ on $\gr(2,\widehat{V})$. Finally, $\dim \big( \langle h_i, I \rangle \cap \widehat{V} \big) = 3$, so $\langle h_i, I \rangle \cap \widehat{V}$ gives the condition $\I$ on $\gr(2,\widehat{V})$. Thus, by the same argument as in the previous proofs, $\g(\boldsymbol{\lambda})$ is a subgroup of $S_2 \wr S_2$.
	
	For $\ThThT \cdot \TT^2 \cdot \TII$, the argument is the same replacing the missing $\I$ condition with the new $\overline{\TII-\III}$ condition in the same manner as in the proof of Theorem \ref{thm: relation 1}.
\end{proof}

\begin{theorem}\label{thm: relation 9}
	The Galois groups of the Schubert problems $\I^2 \cdot \III^2 \cdot \TT^2 \cdot \F, \I \cdot \III \cdot \TII \cdot \TT^2 \cdot \F, \I \cdot \III^2 \cdot \TT^2 \cdot \FI, \TII^2 \cdot \TT^2 \cdot \F,$ and $\III \cdot \TII \cdot \TT^2 \cdot \FI $ on $\gr(4,9)$ are subgroups of $S_2 \wr S_3$.
\end{theorem}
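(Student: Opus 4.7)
The plan mirrors the proof template of Theorem~\ref{thm: relation 3}, using a symmetric variant of the $V$-construction of Theorem~\ref{thm: relation 2}: identify a flag-determined 5-dimensional subspace $V \subset \mathbb{C}^9$ on which $H' := H \cap V \in \gr(2,V)$ satisfies an instance of the auxiliary problem $\T \cdot \I^4$, which by Lemma~\ref{lem: aux probs} has three solutions with Galois group $S_3$; then build an $h_i$-dependent subspace $W$ on which $\widehat H := H \cap W$ satisfies a 2-solution, Galois-$S_2$ inner Schubert problem. A wreath-product argument identical to that of Theorem~\ref{thm: relation 3} then yields $\g(\boldsymbol{\lambda}) \leq S_2 \wr S_3$.

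For each of the five problems, let $L := F^1_2$ come from the $\F$- or $\FI$-condition, let $K_1 := F^2_5$ and $K_2 := F^3_5$ come from the two $\TT$-conditions, and let $J_1, J_2$ be the $F_7$-flags of the two conditions containing $\III$. The key construction, symmetric in $K_1$ and $K_2$, is
\[
V := (K_1 + L) \cap (K_2 + L).
\]
In general position $L \cap K_i = 0$, so $\dim(K_i + L) = 7$ and $\dim V = 5$ with $L \subset V$. Direct dimension counts give $V \cap K_i = K_i \cap (K_{3-i} + L)$ of dimension $3$ and $V \cap J_k$ of dimension $3$, so on $\gr(2,V)$ the subspace $L$ imposes a $\T$-condition and each of $V \cap K_1, V \cap K_2, V \cap J_1, V \cap J_2$ imposes an $\I$-condition. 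Using the generic decomposition $H = (H \cap K_1) \oplus (H \cap K_2)$ and setting $\ell := H \cap L$, a short computation in $H$ gives $\dim(H \cap (K_i + L)) = \dim(H \cap K_i) + \dim \ell = 3$, whence $\dim H' = 3 + 3 - 4 = 2$, and each of the $\T \cdot \I^4$ conditions on $H' \in \gr(2,V)$ is forced by the $\TT^2 \cdot \F \cdot \III^2$-part of $\boldsymbol{\lambda}$.

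For the inner problem, construct an $h_i$- and flag-dependent 4- or 5-dimensional subspace $W$ (in the spirit of $\widehat V$ in the proof of Theorem~\ref{thm: relation 6}, built from $J_1, J_2$ together with the remaining flags), show that $\widehat H := H \cap W$ is a 2-plane, and verify that $\widehat H$ satisfies an instance of a 2-solution, Galois-$S_2$ Schubert problem on $\gr(2, W)$---most naturally $\I^4$ on $\gr(2,4)$, as in Lemma~\ref{lem: aux probs}, or $\I^2 \cdot \T^2$ on $\gr(2,5)$ as appearing in the proof of Theorem~\ref{thm: relation 6}. The inner conditions are extracted from the remaining original conditions together with the residues $\overline{\TII - \III} = \I$ and $\overline{\FI - \F} = \I$ in problems 2--5, using modified flags $h_i F^j_\bullet := \langle h_i, F^j_\bullet\rangle \cap W$. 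Once this is established, every solution $H$ decomposes uniquely as $H = h_i + \widehat H_{i,j}$ with $i \in \{1,2,3\}$ and $j \in \{1,2\}$, and the wreath-product argument from Theorem~\ref{thm: relation 3} gives $\g(\boldsymbol{\lambda}) \leq S_2 \wr S_3$.

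The main obstacle is constructing the correct $W$ and, for each of the five variants separately, isolating the four conditions on $\widehat H$ that assemble into an $\I^4$ (or $\I^2 \cdot \T^2$) instance. The symmetric $V$ treats both $\TT$-conditions uniformly at the auxiliary stage, so the outer $S_3$ step goes through identically in all five cases; the inner bookkeeping---which of the two $\TT$-residues contributes a nontrivial condition on $\widehat H$, and how the $\FI$- and $\TII$-extras in problems 2--5 combine with the original $\I$-conditions---is problem-dependent and will mirror the case-by-case adjustments found in the proofs of Theorems~\ref{thm: relation 4} through \ref{thm: relation 6}.
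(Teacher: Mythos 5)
Your outer construction coincides exactly with the paper's: it also sets $V=\langle L,K\rangle\cap\langle L,K'\rangle\cong\mathbb{C}^5$ (your $(K_1+L)\cap(K_2+L)$), checks $\dim H'=2$, and reads off the auxiliary problem $\T\cdot\I^4$ on $\gr(2,V)$ with three solutions and Galois group $S_3$ by Lemma \ref{lem: aux probs}, before invoking the same wreath-product argument. The inner subspace you leave as ``the main obstacle'' is, in the paper, $\widehat V=\langle N,N'\rangle\cong\mathbb{C}^4$ with $N=\langle h_i,K\rangle\cap J\cap J'$ and $N'=\langle h_i,K'\rangle\cap J\cap J'$, on which $\langle h_i,K\rangle$, $\langle h_i,K'\rangle$, $\langle h_i,I\rangle$, $\langle h_i,I'\rangle$ cut out the $\I^4$ instance with group $S_2$ (the variants being handled by substituting $\overline{\TII-\III}$ or $\overline{\FI-\F}$ for missing $\I$'s, as you anticipate); with that one choice filled in, your argument is the paper's proof.
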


\begin{proof}
	We first prove the Galois group of $\boldsymbol{\lambda}=\I^2 \cdot \III^2 \cdot \TT^2 \cdot \F$ is a subgroup of $S_2 \wr S_3$. Let $F^1_\bullet, \dots, F^7_\bullet$ be defining flags for $\boldsymbol{\lambda}$ in general position, and let $I:=F^1_5, I^\prime:=F^2_5, J:=F^3_7, J^\prime:=F^4_7, K:=F^5_5, K^\prime:=F^6_5, L:=F^7_2$. We begin by showing there is an auxiliary problem $\I^4 \cdot \T$ on a $\gr(2,5)$ as a subproblem of $\boldsymbol{\lambda}$.
	
	Let $V:= \langle L, K \rangle \cap \langle L, K^\prime \rangle \cong \mathbb{C}^5$ and $H^\prime := H \cap V$ for $H \in \Omega_{\boldsymbol{\lambda}}\mathbf{F}$. Since $\dim(H \cap L)=1$ and $\dim(H \cap K)=\dim(H \cap K^\prime)=2$, we have $\dim H^\prime = 2$. Thus, $H^\prime \in \gr(2,V)$. Since $L \subset V$, $L$ gives the condition $\T$ on $\gr(2,V)$. Moreover, since $\dim \big(K \cap \langle L, K^\prime \rangle \big)=3$, $K \cap V$ gives the condition $\I$ on $\gr(2,V)$. Similarly, $K^\prime \cap V$ gives the condition $\I$. Finally, $\dim(J \cap V)=\dim(J^\prime \cap V)=3$, thus $J \cap V$ and $J^\prime \cap V$ both give the condition $\I$ on $\gr(2,V)$. We have the Schubert problem $\I^4 \cdot \T^2$ on a $\gr(2,5)$, which has three solutions $h_1,h_2,h_3$ and Galois group $S_3$ by Lemma \ref{lem: aux probs}.
	
	Let $h_i \in \{h_1,h_2,h_3\}$ such that $H^\prime = h_i$. Let $N=\langle h_i,K \rangle \cap J \cap J^\prime$ and $N^\prime = \langle h_i, K^\prime \rangle \cap J \cap J^\prime$. Then $\widehat{V}=\langle N, N^\prime \cong \mathbb{C}^4$. Since our flags are in general position, $V \oplus \widehat{V} \cong \mathbb{C}^9$. Let $\widehat{H}=H \cap \widehat{V}$. Since $\dim N = \dim N^\prime = 2$, $\langle h_i,K \rangle \cap \widehat{V}$ and $\langle h_i, K^\prime \rangle \cap \widehat{V}$ both give the condition $\I$ on $\gr(2,\widehat{V})$. Moreover, $\langle h_i, I \rangle$ and $\langle h_i, I^\prime \rangle$ both give the condition $\I$ on $\gr(2,\widehat{V})$ as well. Thus, by the same argument as in the previous proofs, $\g(\boldsymbol{\lambda})$ is a subgroup of $S_2 \wr S_3$.
	
	For the other Schubert problems, the argument is the same replacing the missing $\I$ condition with the new $\overline{\TII-\III}$ or $\overline{\FI - \F}$ condition in the same manner as in the proof of Theorem \ref{thm: relation 1}.
\end{proof}

\chapter{ELEVEN TYPES OF SCHUBERT PROBLEMS}

Here, we list all $148$ deficient problems in $\gr(4,9)$. We will organize the problems based on the relations in Chapter 3, and we will determine the Galois group for each of these types of Schubert problem. Additionally, we will include a table showing the results of sampling from the Galois group of the problems using Algorithm \ref{alg:Frob} for each Galois group found. On these tables, the column ``Empirical Fraction'' gives the quantity (order of Galois group)$\cdot$(number of times cycle type observed)/(number of cycle types sampled).

\section{Type one}

Here, we study $\I^2 \cdot \T^2 \cdot \III^2 \cdot \F^2=4$ and all conclusions about this problem will also apply to the problems in Figure \ref{fig: type 1}. By Theorem \ref{thm: relation 1}, the Galois group of each of these problems is a subgroup of either $\g(\I^4 \cdot \II) \wr S_2$, $\g(\I^2 \cdot \T^2) \wr S_2$, or $\g(\I^3 \cdot \TI) \wr S_2$. However, in $\gr(2,5)$, all of these reduce to the problem $\I^4$ in $\gr(2,4)$ by Lemma \ref{lem: deficient}. By Lemma \ref{lem: aux probs}, this has Galois group $S_2$, thus we see that the Galois group of each of these problems is a subgroup of $S_2 \wr S_2 \cong D_4$. To see that this is the Galois group, we use Algorithm \ref{alg:Frob} yielding results similar to Table \ref{table: D4}.

\begin{table}
\centering
\caption{Example of Algorithm \ref{alg:Frob} output when the Galois group is $S_2 \wr S_2$.}\label{table: D4}
\begin{tabular}{|c|c|c|c|} 
	\hline 
	\multicolumn{4}{|c|}{\rule{0pt}{12pt}Cycles in $\g(\ThThT \cdot \TT^2 \cdot \TII)$} \\
	\multicolumn{4}{|c|}{found in 49606 samples} \\
	\hline
	Cycle & Observed & Empirical & Number  \\
	Type & Frequency & Fraction & in $S_2 \wr S_2$ \\
	\hline
	(4) &  12645 & 2.0392 & 2 \\
	\hline
	(2,2) &  18520 & 2.9867 & 3 \\
	\hline
	(2,1,1) &  12292 & 1.9823 & 2 \\
	\hline
	(1,1,1,1) &  6149 & 0.9916 & 1 \\
	\hline
\end{tabular}
\end{table}

\begin{figure}[h]
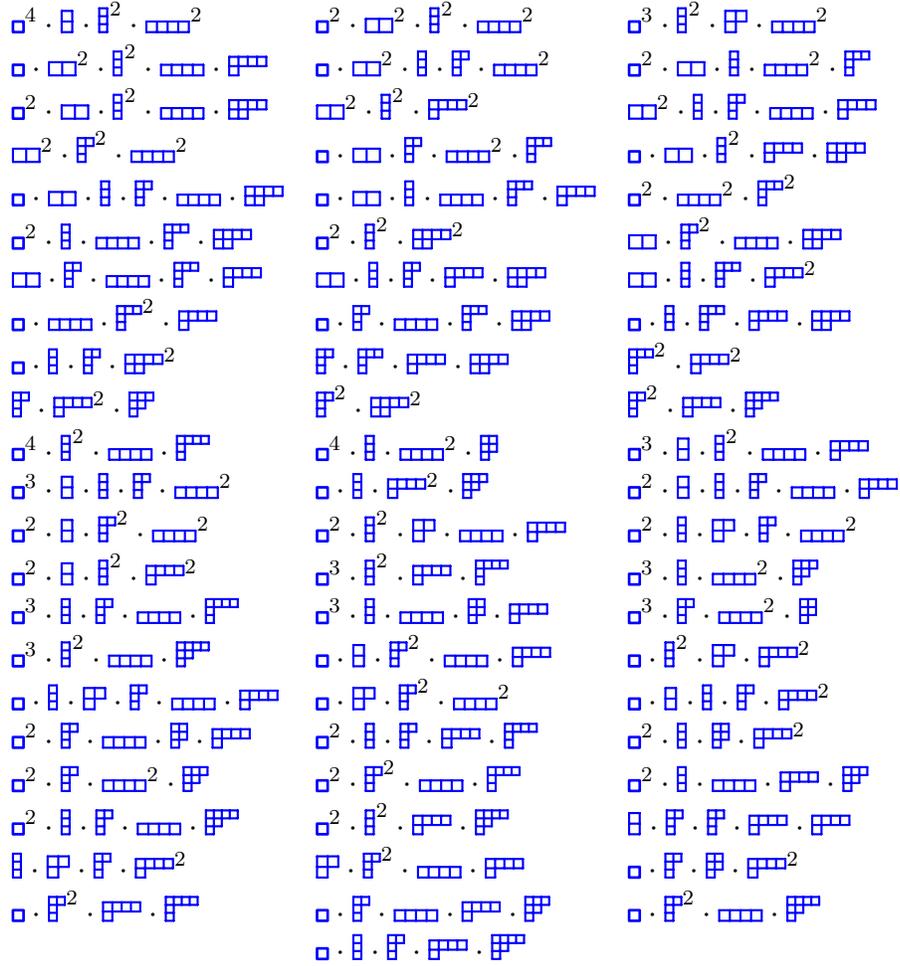

\centering
\begin{tabular}{lll}
	$\I^4 \cdot \II \cdot \III^2 \cdot \F^2$ & $\I^2 \cdot \T^2 \cdot \III^2 \cdot \F^2$ & 
	$\I^3 \cdot \III^2 \cdot \TI \cdot \F^2$ \\
	$\I \cdot \T^2 \cdot \III^2 \cdot \F \cdot \FI$ &%
	
	$\I \cdot \T^2 \cdot \III \cdot \TII \cdot \F^2$ &%
	
	$\I^2 \cdot \T \cdot \III \cdot \F^2 \cdot \ThII$ \\
	
	$\I^2 \cdot \T \cdot \III^2 \cdot \F \cdot \FT$ &%
	
	$\T^2 \cdot \III^2 \cdot \FI^2$ &%
	
	$\T^2 \cdot \III \cdot \TII \cdot \F \cdot \FI$ \\
	
	$\T^2 \cdot \TII^2 \cdot \F^2$ &%
	
	$\I \cdot \T \cdot \TII \cdot \F^2 \cdot \ThII$ &%
	
	$\I \cdot \T \cdot \III^2 \cdot \FI \cdot \FT$ \\
	
	$\I \cdot \T \cdot \III \cdot \TII \cdot \F \cdot \FT$ &%
	
	$\I \cdot \T \cdot \III \cdot \F \cdot \ThII \cdot \FI$ &%
	
	$\I^2 \cdot \F^2 \cdot \ThII^2$ \\
	
	$\I^2 \cdot \III \cdot \F \cdot \ThII \cdot \FT$ &%
	
	$\I^2 \cdot \III^2 \cdot \FT^2$ &%
	
	$\T \cdot \TII^2 \cdot \F \cdot \FT$ \\
	
	$\T \cdot \TII \cdot \F \cdot \ThII \cdot \FI$ &%
	
	$\T \cdot \III \cdot \TII \cdot \FI \cdot \FT$ &%
	
	$\T \cdot \III \cdot \ThII \cdot \FI^2$ \\
	
	$\I \cdot \F \cdot \ThII^2 \cdot \FI$ &%
	
	$\I \cdot \TII \cdot \F \cdot \ThII \cdot \FT$ &%
	
	$\I \cdot \III \cdot \ThII \cdot \FI \cdot \FT$ \\
	
	$\I \cdot \III \cdot \TII \cdot \FT^2$ &%
	
	$\TII \cdot \ThII \cdot \FI \cdot \FT$ &%
	
	$\ThII^2 \cdot \FI^2$ \\
	
	$\TII \cdot \FI^2 \cdot \ThTI$ &
	
	$\TII^2 \cdot \FT^2$ &%

	$\TII^2 \cdot \FI \cdot \FTI$  \\
	
	$\I^4 \cdot \III^2 \cdot \F \cdot \FII$ &%
	
	$\I^4 \cdot \III \cdot \F^2 \cdot \TTI$ &%
	
	$\I^3 \cdot \II \cdot \III^2 \cdot \F \cdot \FI$ \\
	
	$\I^3 \cdot \II \cdot \III \cdot \TII \cdot \F^2$ &%
	
	$\I \cdot \III \cdot \FI^2 \cdot \ThTI$ &%
	
	$\I^2 \cdot \II \cdot \III \cdot \TII \cdot \F \cdot \FI$ \\
	
	$\I^2 \cdot \II \cdot \TII^2 \cdot \F^2$ &%
	
	$\I^2 \cdot \III^2 \cdot \TI \cdot \F \cdot \FI$ &%
	
	$\I^2 \cdot \III \cdot \TI \cdot \TII \cdot \F^2$ \\
	
	$\I^2 \cdot \II \cdot \III^2 \cdot \FI^2$ &%
	
	$\I^3 \cdot \III^2 \cdot \FI \cdot \FII$ &%
	
	$\I^3 \cdot \III \cdot \F^2 \cdot \ThTI$ \\
	
	$\I^3 \cdot \III \cdot \TII \cdot \F \cdot \FII$ &%
	
	$\I^3 \cdot \III \cdot \F \cdot \TTI \cdot \FI$ &%
	
	$\I^3 \cdot \TII \cdot \F^2 \cdot \TTI$ \\
	
	$\I^3 \cdot \III^2 \cdot \F \cdot \FTI$ &%
	
	$\I \cdot \II \cdot \TII^2 \cdot \F \cdot \FI$ &%
	
	$\I \cdot \III^2 \cdot \TI \cdot \FI^2$ \\
	
	$\I \cdot \III \cdot \TI \cdot \TII \cdot \F \cdot \FI$ &%
	
	$\I \cdot \TI \cdot \TII^2 \cdot \F^2$ &%
	
	$\I \cdot \II \cdot \III \cdot \TII \cdot \FI^2$ \\
	
	$\I^2 \cdot \TII \cdot \F \cdot \TTI \cdot \FI$ &%
	
	$\I^2 \cdot \III \cdot \TII \cdot \FI \cdot \FII$ &%
	
	$\I^2 \cdot \III \cdot \TTI \cdot \FI^2$ \\
	
	$\I^2 \cdot \TII \cdot \F^2 \cdot \ThTI$ &%
	
	$\I^2 \cdot \TII^2 \cdot \F \cdot \FII$ &%
	
	$\I^2 \cdot \III \cdot \F \cdot \FI \cdot \ThTI$ \\
	
	$\I^2 \cdot \III \cdot \TII \cdot \F \cdot \FTI$ &%
	
	$\I^2 \cdot \III^2 \cdot \FI \cdot \FTI$ &%
	
	$\II \cdot \TII \cdot \TII \cdot \FI \cdot \FI$ \\
	
	$\III \cdot \TI \cdot \TII \cdot \FI^2$ &%
	
	$\TI \cdot \TII^2 \cdot \F \cdot \FI$ &%
	
	$\I \cdot \TII \cdot \TTI \cdot \FI^2$ \\
	
	$\I \cdot \TII^2 \cdot \FI \cdot \FII$ &%
	
	$\I \cdot \TII \cdot \F \cdot \FI \cdot \ThTI$ &%
	
	$\I \cdot \TII^2 \cdot \F \cdot \FTI$ \\
	
	&%
	
	$\I \cdot \III \cdot \TII \cdot \FI \cdot \FTI$ &%

\end{tabular}
\caption{Problems of type one.\label{fig: type 1}}
\end{figure}

\section{Type two}

Here, we study $\I^4 \cdot \III^3 \cdot \FT=4$ and all conclusions about this problem will also apply to the problems in Figure \ref{fig: type 2}. By Theorem \ref{thm: relation 2}, the Galois group of each of these problems is a subgroup of $\g(\I^4) \wr S_2$. By Lemma \ref{lem: aux probs}, this has Galois group $S_2$, thus we see that the Galois group of each of these problems is a subgroup of $S_2 \wr S_2 \cong D_4$. To see that this is the Galois group, we use Algorithm \ref{alg:Frob} yielding results similar to Table \ref{table: D4}.

\begin{figure}[h]
	\centering
	\begin{tabular}{lll}
		$\I^3 \cdot \III^2 \cdot \TII \cdot \FTh$ &%
		
		$\I^3 \cdot \III^3 \cdot \FThI$ &%
		
		$\I^2 \cdot \III \cdot \TII^2 \cdot \FTh$ \\
		
		$\I^2 \cdot \III^2 \cdot \TII \cdot \FThI$ &%
		
		$\I \cdot \TII^3 \cdot \FTh$ &%
		
		$\I \cdot \III \cdot \TII^2 \cdot \FThI$ \\
		
		$\I^4 \cdot \III^3 \cdot \FT$ &
		
		$\TII^3 \cdot \FThI$ &%
	\end{tabular}
	\caption{Problems of type two.\label{fig: type 2}}
\end{figure}

\section{Type three}
Here, we study the Schubert problems $\FTT \cdot \TT \cdot \III^2 \cdot \I^2, \FTT \cdot \TT \cdot \TII \cdot \III \cdot \I$, and $\FTT \cdot \TT \cdot \TII^2$, all of which have four solutions. By Theorem \ref{thm: relation 5}, the Galois group of each of these problems is a subgroup of $S_2 \wr S_2 \cong D_4$. To see that this is the Galois group, we use Algorithm \ref{alg:Frob} yielding results similar to Table \ref{table: D4}.

\section{Type four}

Here, we study $\I \cdot \T \cdot \III \cdot \TT \cdot \F \cdot \ThTh=4$ and all conclusions about this problem will also apply to the problems in Figure \ref{fig: type 4}. By Theorem \ref{thm: relation 4}, the Galois group of each of these problems is a subgroup of $\g(\I^2\cdot\T^2) \wr S_2$.  In $\gr(2,5)$, $\I^2 \cdot \T^2$ reduces to the problem $\I^4$ in $\gr(2,4)$ by Lemma \ref{lem: deficient}. By Lemma \ref{lem: aux probs}, this has Galois group $S_2$, thus we see that the Galois group of each of these problems is a subgroup of $S_2 \wr S_2 \cong D_4$. To see that this is the Galois group, we use Algorithm \ref{alg:Frob} yielding results similar to Table \ref{table: D4}.

\begin{figure}
	\centering
	\begin{tabular}{lll}
		& $\I \cdot \T \cdot \III \cdot \TT \cdot \F \cdot \ThTh$ & \\
		$\T \cdot \TII \cdot \TT \cdot \F \cdot \ThTh$ &%
		
		$\T \cdot \III \cdot \TT \cdot \FI \cdot \ThTh$ &%
		
		$\I \cdot \TT \cdot \F \cdot \ThII \cdot \ThTh$ \\
		
		$\I \cdot \III \cdot \TT \cdot \ThTh \cdot \FT$ &
		
		$\TII \cdot \TT \cdot \ThTh \cdot \FT$&
		
		$\TT \cdot \ThII \cdot \FI \cdot \ThTh$
	\end{tabular}
	\caption{Problems of type four.\label{fig: type 4}}
\end{figure}

\section{Type five}
Here, we study $\ThThT \cdot \TT^2 \cdot \III \cdot \I$ and $\ThThT \cdot \TT^2 \cdot \TII$, all of which have four solutions. By Theorem \ref{thm: relation 6}, the Galois group of each of these problems is a subgroup of $S_2 \wr S_2 \cong D_4$. To see that this is the Galois group, we use Algorithm \ref{alg:Frob} yielding results similar to Table \ref{table: D4}.

\section{Type six}
Here, we study $\ThTh \cdot \ThT \cdot \F \cdot \III \cdot \I^2, \ThTh \cdot \ThT \cdot \F \cdot \TII \cdot \I, \ThTh \cdot \ThT \cdot \FI \cdot \III \cdot \I$, $\ThTh \cdot \ThT \cdot \FI \cdot \TI$, $\ThThI \cdot \F \cdot \TT \cdot \III \cdot \I^2, \ThThI \cdot \F \cdot \TT \cdot \TII \cdot \I, \ThThI \cdot \FI \cdot \TT \cdot \III \cdot \I$, and $\ThThI \cdot \FI \cdot \TT \cdot \TII$, all of which have four solutions. By Theorem \ref{thm: relation 7}, the Galois group of each of these problems is a subgroup of $S_2 \wr S_2 \cong D_4$. To see that this is the Galois group, we use Algorithm \ref{alg:Frob} yielding results similar to Table \ref{table: D4}.

\section{Type seven}

Here, we study $\I^4 \cdot \T \cdot \III^2 \cdot \F^2=6$ and all conclusions about this problem will also apply to the problems in Figure \ref{fig: type 8}. By Theorem \ref{thm: relation 1}, the Galois group of each of these problems is a subgroup of $\g(\I^4\cdot\T) \wr S_2$. By Lemma \ref{lem: aux probs}, this has Galois group $S_3$, thus we see that the Galois group of each of these problems is a subgroup of $S_3 \wr S_2$. To see that this is the Galois group, we use Algorithm \ref{alg:Frob} yielding results similar to Table \ref{table: S3S2}.

\begin{table}
	\centering
	\caption{Example of Algorithm \ref{alg:Frob} output when the Galois group is $S_3 \wr S_2$.}\label{table: S3S2}
\begin{tabular}{|c|c|c|c|} 
	\hline 
	\multicolumn{4}{|c|}{\rule{0pt}{14pt}Cycles in $\g( \I^4 \cdot \T \cdot \III^2 \cdot \F^2)$} \\
	\multicolumn{4}{|c|}{found in 49557 samples} \\
	\hline 
	Cycle & Observed & Empirical & Number  \\
	Type & Frequency & Fraction & in $S_3 \wr S_2$ \\
	\hline
	(6) &  8275 & 12.0225 & 12 \\
	\hline
	(4,2) &  12451 & 18.0897& 18 \\
	\hline
	(3,3) &  2703 & 3.9271 & 4 \\
	\hline
	(3,2,1) &  8194 & 11.9048 & 12 \\
	\hline
	(3,1,1,1) &  2667 & 3.8748 & 4 \\
	\hline
	(2,2,2) &  4105 & 5.9640 & 6 \\
	\hline
	(2,2,1,1) &  6299 & 9.1516 & 9 \\
	\hline
	(2,1,1,1,1) &  4172 & 6.0614 & 6 \\
	\hline
	(1,1,1,1,1,1) &  691 & 1.004 & 1 \\
	\hline
\end{tabular} 
\end{table}

\begin{figure}
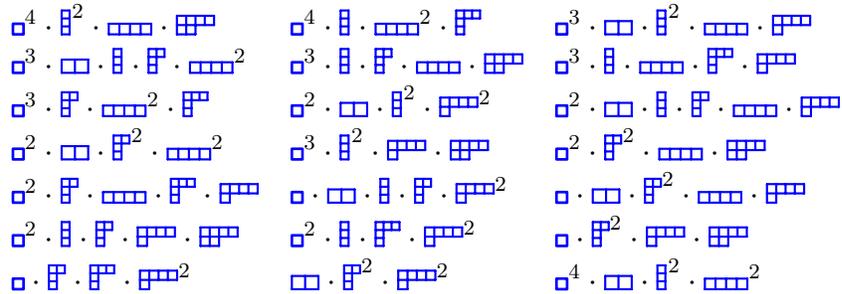

	\centering
\begin{tabular}{lll}
	$\I^4 \cdot \III^2 \cdot \F \cdot \FT $ &%
	
	$\I^4 \cdot \III \cdot \F^2 \cdot \ThII $ &%
	
	$\I^3 \cdot \T \cdot \III^2 \cdot \F \cdot \FI $ \\
	
	$\I^3 \cdot \T \cdot \III \cdot \TII \cdot \F^2 $ &%
	
	$\I^3 \cdot \III \cdot \TII \cdot \F \cdot \FT $ &%
	
	$\I^3 \cdot \III \cdot \F \cdot \ThII \cdot \FI $ \\
	
	$\I^3 \cdot \TII \cdot \F^2 \cdot \ThII $ &%
	
	$\I^2 \cdot \T \cdot \III^2 \cdot \FI^2 $ &%
	
	$\I^2 \cdot \T \cdot \III \cdot \TII \cdot \F \cdot \FI $ \\
	
	$\I^2 \cdot \T \cdot \TII^2 \cdot \F^2 $ &%
	
	$\I^3 \cdot \III^2 \cdot \FI \cdot \FT $ &%
	
	$\I^2 \cdot \TII^2 \cdot \F \cdot \FT $ \\
	
	$\I^2 \cdot \TII \cdot \F \cdot \ThII \cdot \FI $ &%
	
	$\I \cdot \T \cdot \III \cdot \TII \cdot \FI^2 $ &%
	
	$\I \cdot \T \cdot \TII^2 \cdot \F \cdot \FI $ \\
	
	$\I^2 \cdot \III \cdot \TII \cdot \FI \cdot \FT $ &%
	
	$\I^2 \cdot \III \cdot \ThII \cdot \FI^2 $ &%
	
	$\I \cdot \TII^2 \cdot \FI \cdot \FT $ \\
	
	$\I \cdot \TII \cdot \ThII \cdot \FI^2 $ &%
	
	$\T \cdot \TII^2 \cdot \FI^2 $ &%
	
	$\I^4 \cdot \T \cdot \III^2 \cdot \F^2$
\end{tabular}
\caption{Problems of type seven.\label{fig: type 8}}
\end{figure}

\section{Type eight}

Here, we study $\I^4 \cdot \III^3 \cdot \Th \cdot \F=6$ and all conclusions about this problem will also apply to the problems in Figure \ref{fig: type 9}. By Theorem \ref{thm: relation 3}, the Galois group of each of these problems is a subgroup of $\g(\I^4) \wr S_3$. By Lemma \ref{lem: aux probs}, this has Galois group $S_2$, thus we see that the Galois group of each of these problems is a subgroup of $S_2 \wr S_3$. To see that this is the Galois group, we use Algorithm \ref{alg:Frob} yielding results similar to Table \ref{table: S2S3}.

\begin{table}
	\centering
	\caption{Example of Algorithm \ref{alg:Frob} output when the Galois group is $S_2 \wr S_3$.}\label{table: S2S3}
\begin{tabular}{|c|c|c|c|} 
	\hline 
	\multicolumn{4}{|c|}{\rule{0pt}{14pt}Cycles in $\g(\I \cdot \III \cdot \TII^2 \cdot \ThI \cdot \F)$} \\
	\multicolumn{4}{|c|}{found in 49501 samples} \\
	\hline 
	Cycle & Observed & Empirical & Number  \\
	Type & Frequency & Fraction & in $S_2 \wr S_3$ \\
	\hline
	(6) &  8256 & 8.0057 & 8 \\
	\hline
	(4,2) &  6168 & 5.9810 & 6 \\
	\hline
	(4,1,1) &  6204 & 6.0159 & 6 \\
	\hline
	(3,3) &  8082 & 7.8369 & 8 \\
	\hline
	(2,2,2) &  7264 & 7.0437 & 7 \\
	\hline
	(2,2,1,1) &  9407 & 9.1217 & 9 \\
	\hline
	(2,1,1,1,1) &  3071 & 2.9779 & 3 \\
	\hline
	(1,1,1,1,1,1) &  1049 & 1.0172 & 1\\
	\hline
\end{tabular}
\end{table}

\begin{figure}
	\centering
	\begin{tabular}{lll}
		$\I^3 \cdot \III^3 \cdot \Th \cdot \FI $&%
		
		$\I^3 \cdot \III^2 \cdot \Th \cdot \TII \cdot \F $ &%
		
		$\I^3 \cdot \III^3 \cdot \ThI \cdot \F $ \\
		
		$\I^2 \cdot \III^2 \cdot \TII \cdot \ThI \cdot \F $ &%
		
		$\I^2 \cdot \III^2 \cdot \Th \cdot \TII \cdot \FI $ &%
		
		$\I^2 \cdot \III^3 \cdot \ThI \cdot \FI $ \\
		
		$\I^2 \cdot \III \cdot \Th \cdot \TII^2 \cdot \F $ &%
		
		$\I \cdot \III \cdot \TII^2 \cdot \ThI \cdot \F $ &%
		
		$\I \cdot \III^2 \cdot \TII \cdot \ThI \cdot \FI $ \\
		
		$\I \cdot \Th \cdot \TII^3 \cdot \F $ &%
		
		$\I \cdot \III \cdot \Th \cdot \TII^2 \cdot \FI $ &%
		
		$\TII^3 \cdot \ThI \cdot \F $ \\
		
		$\III \cdot \TII^2 \cdot \ThI \cdot \FI $ &%
		
		$\Th \cdot \TII^3 \cdot \FI $ &%
		
		$\I^4 \cdot \III^3 \cdot \Th \cdot\F = 6$
	\end{tabular}
	\caption{Problems of type eight.\label{fig: type 9}}
\end{figure}

\section{Type nine}
Here, we study $\I^2 \cdot \III^2 \cdot \TT^2 \cdot \F=6$ and all conclusions about this problem will also apply to the problems in Figure \ref{fig: type 10}. By Theorem \ref{thm: relation 9}, the Galois group of each of these problems is a subgroup of $\g(\I^4) \wr S_3$. By Lemma \ref{lem: aux probs}, this has Galois group $S_2$, thus we see that the Galois group of each of these problems is a subgroup of $S_2 \wr S_3$. To see that this is the Galois group, we use Algorithm \ref{alg:Frob} yielding results similar to Table \ref{table: S2S3}.

\begin{figure}
	\centering
	\begin{tabular}{lll}
		$\I^2 \cdot \III^2 \cdot \TT^2 \cdot \F$ &
		
		$\I \cdot \III \cdot \TII \cdot \TT^2 \cdot \F $ &%
			
		$\I \cdot \III^2 \cdot \TT^2 \cdot \FI $ \\
			
		$\TII^2 \cdot \TT^2 \cdot \F $ &
			
		$\III \cdot \TII \cdot \TT^2 \cdot \FI $ 
	\end{tabular}
	\caption{Problems of type nine.\label{fig: type 10}}
\end{figure}

\section{Type ten}

Here, we study $\I^3 \cdot \III \cdot \TT \cdot \F \cdot \ThTh=6$ and all conclusions about this problem will also apply to the problems in Figure \ref{fig: type 11}. By Theorem \ref{thm: relation 4}, the Galois group of each of these problems is a subgroup of $\g(\I^4\cdot\T) \wr S_2$. By Lemma \ref{lem: aux probs}, this has Galois group $S_3$, thus we see that the Galois group of each of these problems is a subgroup of $S_3 \wr S_2$. To see that this is the Galois group, we use Algorithm \ref{alg:Frob} yielding results similar to Table \ref{table: S3S2}.

\begin{figure}
	\centering
	\begin{tabular}{lll}
		& $\I^3 \cdot \III \cdot \TT \cdot \F \cdot \ThTh$ & \\
		
		$\I^2 \cdot \TII \cdot \TT \cdot \F \cdot \ThTh $ &%
		
		$\I^2 \cdot \III \cdot \TT \cdot \FI \cdot \ThTh $ &%
		
		$\I \cdot \TII \cdot \TT \cdot \FI \cdot \ThTh $ %
	\end{tabular}
	\caption{Problems of type ten.\label{fig: type 11}}
\end{figure}

\section{Type eleven}

Here, we study $\I^6 \cdot \III^2 \cdot \F^2 =10$ and all conclusions about this problem will also apply to the problems in Figure \ref{fig: type 12}. By Theorem \ref{thm: relation 1}, the Galois group of each of these problems is a subgroup of $\g(\I^6) \wr S_2$. By Lemma \ref{lem: aux probs}, this has Galois group $S_5$, thus we see that the Galois group of each of these problems is a subgroup of $S_5 \wr S_2$. To see that this is the Galois group, we use Algorithm \ref{alg:Frob} yielding results similar to Table \ref{table: S5S2}.

\begin{table}
	\centering
	\caption{Example of Algorithm \ref{alg:Frob} output when the Galois group is $S_5 \wr S_2$.}\label{table: S5S2}
	\begin{tabular}{|c|c|c|c|} 
		\hline 
		\multicolumn{4}{|c|}{\rule{0pt}{12pt}Cycles in $\g(\I^3 \cdot \III \cdot \TII \cdot \FI^2)$} \\
		\multicolumn{4}{|c|}{found in 49510 samples} \\
		\hline 
		Cycle & Observed & Empirical & Number  \\
		Type & Frequency & Fraction & in $S_5 \wr S_2$ \\
		\hline
		(10) &  4976 & 2894.5425 & 2880 \\
		\hline
		(8,2) &  6209 & 3611.7794 & 3600 \\
		\hline
		(6,4) &  4226 & 2458.2669 & 2400 \\
		\hline
		(6,2,2) &  4061 & 2362.2854 & 2400 \\
		\hline
		(5,5) &  1018 & 592.1713 & 576 \\
		\hline
		(5,4,1) &  2383 & 1386.1927 & 1440 \\
		\hline
		(5,3,2) &  1636 & 951.6623 & 960 \\
		\hline
		(5,3,1,1) &  1641 & 954.5708 & 960 \\
		\hline
		(5,2,2,1) &  1321 & 768.4266 & 720 \\
		\hline
		(5,2,1,1,1) &  798 & 464.1971 & 480 \\
		\hline
		(5,1,1,1,1,1) &  85 & 49.4446 & 48 \\
		\hline
		(4,4,2) &  3150 & 1832.3571 & 1800 \\
		\hline
		(4,4,1,1) &  1542 & 896.9824 & 900 \\
		\hline
		(4,3,2,1) &  2007 & 1167.4732 & 1200 \\
		\hline
		(4,3,1,1,1) &  2040 & 1186.6692 & 1200 \\
		\hline
		(4,2,2,2) &  2068 & 1202.9570 & 1200 \\
		\hline
		(4,2,2,1,1) &  1521 & 884.7667 & 900 \\
		\hline
		(4,2,1,1,1,1) &  1053 & 612.5308 & 600 \\
		\hline
		(4,1,1,1,1,1,1) &  103 & 59.9151 & 60 \\
		\hline
		(3,3,2,2) &  678 & 394.3930 & 400 \\
		\hline
		(3,3,2,1,1) &  1388 & 807.4005 & 800 \\
		\hline
		(3,3,1,1,1,1) &  680 & 395.5564 & 400 \\
		\hline
		(3,2,2,2,1) &  1017 & 591.5896 & 600 \\
		\hline
		(3,2,2,1,1,1) &  1735 & 1009.2505 & 1000 \\
		\hline
		(3,2,1,1,1,1,1) &  752 & 437.4389 & 440 \\
		\hline
		(3,1,1,1,1,1,1,1) &  74 & 43.0456 & 40 \\
		\hline
		(2,2,2,2,2) &  206 & 119.8303 & 120 \\
		\hline
		(2,2,2,2,1,1) &  371 & 215.8108 & 225 \\
		\hline
		(2,2,2,1,1,1,1) &  550 & 319.9353 & 300 \\
		\hline
		(2,2,1,1,1,1,1,1) &  192 & 111.6865 & 130 \\
		\hline
		(2,1,1,1,1,1,1,1,1) &  28 & 16.2876 & 20 \\
		\hline
		(1,1,1,1,1,1,1,1,1,1) &  1 & 0.6330 & 1 \\
		\hline
	\end{tabular} 
\end{table}

\begin{figure}
	\centering
	\begin{tabular}{lll}
		$\I^2 \cdot \TII^2 \FI^2$ &%
		
		$\I^3 \cdot \III \cdot \TII \cdot \FI^2$ &%
		
		$\I^3 \cdot \TII^2 \cdot \F \cdot \FI$ \\
		
		$\I^4 \cdot \III^2 \cdot \FI^2$ &%
		
		$\I^4 \cdot \III \cdot \TII \cdot \F \cdot \FI$ &%
		
		$\I^4 \cdot \TII^2 \cdot \F^2$ \\
		
		$\I^5 \cdot \III \cdot \TII \cdot \F^2$&
		
		$\I^5 \cdot \III^2 \cdot \F \cdot
		\FI$ &%
		
		$\I^6 \cdot \III^2 \cdot \F^2$
	\end{tabular}
	\caption{Problems of type eleven.\label{fig: type 12}}
\end{figure}

\chapter{CONCLUSION \label{cha:Summary}}

In general, we expect the Galois group of a Schubert problem to be the full symmetric group. When it is not, the group reflects some intrinsic geometric structure in the problem's set of solutions. We explored all of the reduced Schubert problems in $\gr(4,9)$ and found that the Galois groups of all except $148$ of them are at least alternating. Of the problems whose Galois groups are at least alternating, we were able to determine that over $80\%$ of them actually have the full symmetric group as their Galois group. For the $148$ deficient problems, we were able to completely determine their Galois groups. We also described three different structures, each of which may be used to build infinite families of examples of Schubert problems with deficient Galois group.

We believe that, with some more work, the other structures used to classify the deficient problems in $\gr(4,9)$ will be generalized to other Grassmannians as well. Moreover, the techniques developed in studying the Schubert problems in $\gr(4,9)$ are readily adapted for studying Schubert problems in other spaces. We are currently developing the computational structure that will be needed to do a thorough search of the Schubert problems in $\gr(4,10)$. We are also working to expand the scope of our current software so that it will be capable of exploring Schubert problems in more general flag varieties. This is part of a long-term project devoted to understanding the Galois groups of Schubert problems in general.

\let\oldbibitem\bibitem
\renewcommand{\bibitem}{\setlength{\itemsep}{0pt}\oldbibitem}
\bibliographystyle{amsplain}

\phantomsection
\addcontentsline{toc}{chapter}{REFERENCES}

\renewcommand{\bibname}{{\normalsize\rm REFERENCES}}

\bibliography{myReference}

\end{document}